\crefname{equation}{}{}
\newtheorem{theorem}{Theorem}[section]
\newtheorem{lemma}[theorem]{Lemma}
\newtheorem{proposition}[theorem]{Proposition}
\newtheorem{corollary}[theorem]{Corollary}
\newtheorem{conjecture*}{Conjecture}
\theoremstyle{definition}
\newtheorem{definition}[theorem]{Definition}
\theoremstyle{remark}
\newtheorem{remark}[theorem]{Remark}
\numberwithin{equation}{section}
\newcommand{\C}{\mathbb{C}}
\newcommand{\E}{\mathbb{E}}
\newcommand{\N}{\mathbb{N}}
\newcommand{\R}{\mathbb{R}}
\newcommand{\Z}{\mathbb{Z}}
\newcommand{\Tb}{\mathbb{T}}
\newcommand{\Pb}{\mathbb{P}}
\newcommand{\Dc}{\mathcal{D}}
\newcommand{\Lc}{\mathcal{L}}
\newcommand{\Pc}{\mathcal{P}}
\newcommand{\Vc}{\mathcal{V}}
\newcommand{\leb}[1]{{\mathcal{L}(#1)}} % lebesgue measure
\def\avint{%
  \,\ThisStyle{%
    \ensurestackMath{%
      \stackinset{c}{0\LMpt}{c}{0\LMpt}{\SavedStyle-}{\SavedStyle\phantom{\int}}
    }%
    \setbox0=\hbox{$\SavedStyle\int\,$}\kern-\wd0
  }%
  \int
}
\newcommand{\Lnorm}{L_\text{\rm norm}}
\DeclareMathOperator{\lap}{\Delta}
\DeclareMathOperator{\Var}{Var}
\DeclareMathOperator{\sg}{\cdot\nabla}
\newcommand{\norm}[2]{\|#1\|_{#2}}
\newcommand{\semi}[2]{[\![#1]\!]_{#2}}
\def\MRnum#1\empty{#1}
\renewcommand{\MRhref}[2]{%
  \href{http://www.ams.org/mathscinet-getitem?mr=#1}{#2}
}
\renewcommand{\MR}[1]{
  \relax\ifhmode\unskip\space\fi
  \MRhref{\MRnum#1\empty}{\texttt{\Tiny[MR\MRnum#1\empty]}}
}
\begin{document}
\title[Instantaneous total enhanced dissipation]{Instantaneous total enhanced dissipation for very rough shear flows}
\author[M. Romito]{Marco Romito}
\author[L. Roveri]{Leonardo Roveri}
\address{Dipartimento di Matematica, Universit\`a di Pisa, Largo Bruno Pontecorvo 5, 56127 Pisa, Italia }
\email{\href{mailto:marco.romito@unipi.it}{marco.romito@unipi.it}}
\urladdr{\url{http://people.dm.unipi.it/romito}}
\email{\href{mailto:leonardo.roveri@phd.unipi.it}{leonardo.roveri@phd.unipi.it}}
\keywords{Enhanced dissipation, rough shear flows, passive scalars}
\subjclass[2020]{76F25, 35Q35, 35R05}
\begin{abstract}
    This paper investigates enhanced dissipation for a passive scalar advected by "very rough" horizontal shear flows, described by an advection-diffusion equation on the 2D torus. The authors extend results of Galeati and Gubinelli \cite{GalGub2023} to generic flows in negative Besov spaces, proving that the dissipation rate increases to infinity as viscosity vanishes. This is obtained by deriving (non-sharp) upper and lower bounds on the dissipation rate. The upper bound holds for truly irregular velocities, namely those verifying a suitable version of the Wei irregularity index \cite{Wei2021}. As a by-product, it follows that for truly rough shear flows the vanishing viscosity solution to the corresponding inviscid equation is trivial.
\end{abstract}
\maketitle

%%%%%%%%%%%%%%%%%%%%%%%%%%%%%%%%%%%%%%%%
%%%%%%%%%%%%%%%%%%%%%%%%%%%%%%%%%%%%%%%%
%%%%%%%%%%%%%%%%%%%%%%%%%%%%%%%%%%%%%%%%

\section{Introduction}

With this work we provide a study on the dissipation properties of an advection-diffusion equation on the two-dimensional torus,
\[
    \begin{cases}
        \partial_t f + u \sg f = \nu \lap f, \\
        f|_{t=0} = f_0,                      \\
        \int_\Tb f(t,x,y)\,dx = 0 .
    \end{cases}
\]
Here, the scalar $f$, said to be \textit{passive} because it has no feedback effects on the flow, is advected by a rough vector field $u$ and $\nu\ll1$ is a diffusion coefficient.
Equations like the above are classic and have been the subject of an extended literature (cf. \cite{IKX14, LTD11, Seis13} for references), and model for example concentration of chemicals in a fluid.

We consider in particular $u$ to be a horizontal shear flow, hence of the form $\tilde u(x,y) = (u(y),0)$, which allows to reformulate the model as
\begin{equation}\label{eq:gg21}
    \begin{cases}
        \partial_t f + u \partial_x f = \nu \lap f, \\
        f|_{t=0} = f_0,                             \\
        \int_\Tb f(t,x,y)\,dx = 0.
    \end{cases}
\end{equation}
In this case, a closely related problem regards the hypoelliptic version of \cref{eq:gg21}, arising when we change the diffusive term and substitute the Laplacian operator with a second-order derivative in the second direction only, i.e.
\begin{equation}
    \begin{cases}
        \partial_t f + u \partial_x f = \nu \partial_y^2 f, \\
        f|_{t=0} = f_0,                                     \\
        \int_\Tb f(t,x,y)\,dx = 0.
    \end{cases}
\end{equation}
The above is used for example in the study of boundary layers \cite{LWX16}, and was first considered by Kolmogorov in \cite{K34}.

It is well known that energy estimates show how the energy of the system, represented by the $L^2$ norm of $f$, decays in time simply due to the dissipation provided by the Laplacian (or by an analogous second order term).
However, more and more attention has been devoted to studying possible influences of the advection on the diffusion.
In the last two decades, notably following the work \cite{CKRZ08}, many authors have focused on the fact that the stirring provided by the transport term may cause dissipation of energy at a much faster rate than the classic one (which after all corresponds to that of the heat equation), see for example \cite{BGM20, BW13, Zla10}.
This led to define the class of \textit{diffusion enhancing} velocities, namely those for which there exists a a rate $r:(0,1)\to\R$ such that the energy norm of the solution satisfies $\norm{f(t)}{L^2} \lesssim \exp \left( - r(\nu) t \right) \norm{f_0}{L^2}$.

Our work in particular extends the achievements of \cite{Col21} and \cite{GalGub2023} to describe such dissipative effects for a class of advecting velocities as broad as possible.
In \cite{Col21}, enhanced dissipation is proved for a specific Weierstrass-type shear flow $u$, sharply $\alpha$-H\"older for positive $\alpha\in(0,1)$, that is constructed ad hoc.
The rate found therein is of the form $r(\nu) \sim \nu^{\frac{\alpha}{\alpha+2}}$.
However, it is left as an open problem the potential extension of their result to a more \emph{generic} class of velocities.
This matter is addressed in \cite{GalGub2023}, where the authors prove that the same rate is sharp and can be recovered for \emph{any} shear flow in the larger Besov space $B_{1,\infty}^\alpha$, for positive $\alpha\in(0,1)$, provided that an irregularity condition is satisfied.
Following the same path, we push the regularity boundaries in order to allow the velocity to be a distribution.

By \textit{rough flow}, indeed, we mean that $u\in B^\alpha_{1,\infty}$ with $\alpha\in(-\frac12,0)$, so that the shear flow $\tilde{u}$ is not necessarily a function.
The reason behind the restriction $\alpha>-\frac12$ is that, in this range of values of the regularity parameter, one can still make sense of the equation in a classical (weak) way.
However, we expect that the same results obtained here extend to more irregular regimes, as long as a suitable meaning is given to the solution, following for instance the modern approach of singular stochastic equations, see \cite{Hai2014,GubImkPer2015}.

We show that, for negative $\alpha$, the diffusion rate that controls the energy increases to infinity as the viscosity decreases. In particular, we derive explicit lower and upper bounds on this growth.
The lower bound is the key result, as it ensures the rate diverges and depends on the velocity being sufficiently irregular.
The upper bound is also of interest, as it helps to characterise this explosion and applies to any advecting velocity, based on the regularity properties of the chosen Besov spaces.

The other goal we achieve is describing the subset of diffusion-enhancing velocities in the chosen Besov space and proving that they are `sufficiently many'.
This is intended in the sense of \emph{prevalence}, as recently considered in \cite{GalGub2020}.
Interested readers can find a thorough introduction to the notion of prevalence there.
This final result is based on the notion of \emph{$\alpha$-irregularity}, as defined in \cref{def:ourWei}.
It also represents a sufficient condition for velocities to be diffusion-enhancing and a necessary condition for the lower bound to hold.

Our main findings can be summarized as follows.

\begin{theorem}\label{th:main}
    Let $\alpha \in (-\frac12,0)$.
    \begin{enumerate}[label=\roman*)]
        \item $\alpha$-irregularity, in the sense of \cref{def:ourWei}, is a sufficient condition for a velocity $u\in B^\alpha_{1,\infty}$ to be diffusion-enhancing with rate $r$ such that
              \begin{equation}\label{eq:mainupper}
                  \nu^{\frac{\tilde\alpha}{\tilde\alpha + 2}}
                  \lesssim
                  r(\nu)
                  \qquad
                  \forall \bar\alpha < \alpha .
              \end{equation}
              Moreover, if $u\in B^\alpha_{1,\infty}$ is diffusion enhancing with rate $r$,
              \begin{equation}\label{eq:mainlower}
                  r(\nu)
                  \lesssim
                  \nu^{-\frac{1/2-\bar\alpha}{5/2+\bar\alpha}}
                  \qquad
                  \forall \tilde\alpha > \alpha .
              \end{equation}
        \item The subset of $\alpha$-irregular velocities is prevalent in $B^\alpha_{1,\infty}$, hence almost every $u\in B^\alpha_{1,\infty}$ is diffusion-enhancing.
    \end{enumerate}
\end{theorem}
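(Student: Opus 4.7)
The plan is to split the proof into three largely independent pieces: the lower bound on the enhancement rate under $\alpha$-irregularity, the general upper bound on any admissible rate, and the prevalence statement. I would treat them in this order, as each leans on standard technology while the last is essentially self-contained.

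For the lower bound \cref{eq:mainupper}, I would first Fourier-decompose \cref{eq:gg21} in the variable $x$, reducing the PDE to a family of one-dimensional parametrised equations
\begin{equation*}
  \partial_t \hat f_k + ik\,u\,\hat f_k = \nu(\partial_y^2 - k^2)\hat f_k, \qquad k\in\Z\setminus\{0\},
\end{equation*}
so that enhanced dissipation becomes a uniform-in-$k$ spectral gap estimate for the one-mode generator $\nu(\partial_y^2-k^2)-iku$. To extract such a gap from a distributional $u$ I would build a Wei-type hypocoercive functional
\begin{equation*}
  \Phi(t) = \|\hat f_k\|^2 + a\,\mathrm{Re}\langle \hat f_k, A\hat f_k\rangle + b\,\|\partial_y \hat f_k\|^2,
\end{equation*}
with weights $a=a(k,\nu)$, $b=b(k,\nu)$ and an auxiliary operator $A$ that encodes a mollified version of $u$ at the scale $N^{-1}$ selected by the $\alpha$-irregularity index of \cref{def:ourWei}. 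Differentiating $\Phi$ and using the quantitative oscillation lower bound provided by $\alpha$-irregularity should yield a per-mode decay $\Phi' \le -c\,|k|^{2/(\bar\alpha+2)}\nu^{\bar\alpha/(\bar\alpha+2)}\Phi$ for any $\bar\alpha<\alpha$, and taking the infimum over $k\ne 0$ produces precisely the claimed rate, with the $\varepsilon$-loss $\bar\alpha<\alpha$ coming from the fact that one can only use irregularity at scales slightly coarser than the optimal one.

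For the universal upper bound \cref{eq:mainlower}, I would build an explicit near-extremiser. A natural candidate is a single-mode initial datum $f_0(x,y)=e^{ikx}\varphi(y)$, with $\varphi$ localised at scale $N^{-1}$ and $k,N$ tuned to $\nu$. Since $u\in B^{\alpha}_{1,\infty}$, its averaged oscillation at scale $N^{-1}$ is controlled by $N^{-\alpha}$, and this control is attained in general: a direct energy estimate comparing the transport time $(|k|N^{-\alpha})^{-1}$ with the diffusive time $(\nu(k^2+N^2))^{-1}$ gives a lower bound on $\|f(t)\|_{L^2}$ that cannot beat $\nu^{-(1/2-\tilde\alpha)/(5/2+\tilde\alpha)}$ after optimisation in $k,N$ and trading the borderline exponent $\alpha$ for any $\tilde\alpha>\alpha$. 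No irregularity is used here, which is why the statement holds for every $u\in B^{\alpha}_{1,\infty}$.

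The prevalence claim (ii) is, as I see it, the main obstacle. Following \cite{GalGub2020}, I would construct a \emph{probe measure} on $B^\alpha_{1,\infty}$, most naturally the law of a random dyadic series
\begin{equation*}
  v = \sum_{j\ge 0}\sum_{\ell} \xi_{j,\ell}\, 2^{-j\alpha}\, e_{j,\ell},
\end{equation*}
with $\{\xi_{j,\ell}\}$ suitable i.i.d. random coefficients and $\{e_{j,\ell}\}$ a Littlewood–Paley-type basis adapted to $B^\alpha_{1,\infty}$, and then show that for every fixed deterministic $u\in B^\alpha_{1,\infty}$ the shifted velocity $u+v$ is $\alpha$-irregular almost surely, in the sense of \cref{def:ourWei}. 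The technical heart reduces to uniform-in-scale small-ball / anticoncentration estimates for the random perturbation (Paley–Zygmund or Fernique-type second-moment bounds on each dyadic block) combined with a Borel–Cantelli union bound over scales. The subtle point, and the reason this is the hardest step, is that the deterministic $u$ may a priori align adversely with $v$ at countably many scales, so the small-ball estimate must be genuinely translation-robust; I expect to have to design the law of $\xi_{j,\ell}$ and the basis $\{e_{j,\ell}\}$ so that the probe measure is transverse to the $\alpha$-irregularity-violating sets uniformly across shifts. Once $\alpha$-irregularity is established on a prevalent set, part (i) immediately upgrades it to prevalent diffusion enhancement.
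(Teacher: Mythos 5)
Your overall architecture matches the paper's (Fourier decomposition, lower bound via a Wei-type mechanism, upper bound via choosing a near-extremiser, prevalence via a probe measure and shift-robustness), but the technical heart of the upper bound \eqref{eq:mainlower} is missing, and this is not a detail one can wave away in the distributional regime.

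Your proposed argument for \eqref{eq:mainlower} rests on a "transport time'' heuristic, comparing $(|k|N^{-\alpha})^{-1}$ with the diffusive time at scale $N^{-1}$. For $\alpha>0$ and $u$ a function, such a heuristic can be made rigorous by pointwise control of increments of $u$. But here $\alpha\in(-\tfrac12,0)$, so $u$ is a genuine distribution: the ``averaged oscillation at scale $N^{-1}$'' is not a pointwise quantity, there is no Lagrangian trajectory along which a phase $\int_0^t u$ accumulates, and the $B^{\alpha}_{1,\infty}$ control is $L^1$-based rather than uniform. The paper handles this by representing the solution through a Feynman--Kac formula along Brownian paths (\cref{lemma:feynman-kac}) and then, crucially, making sense of $\int_0^t u(y+\sqrt{2\nu}B_s)\,ds$ via the It\^o trick: one solves the backward heat equation $\partial_t U-\tfrac\nu2\partial_y^2U=u-\bar u$ and uses It\^o's formula to convert that singular time integral into a stochastic integral against $\partial_y U$, whose $L^2$ norm is then bounded by the parabolic estimate in \cref{app:parabolic}. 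It is exactly this parabolic estimate which produces the exponent $\tfrac{1/2-\bar\alpha}{5/2+\bar\alpha}$ (with the $\epsilon$-loss coming from the embedding $B^{\alpha}_{1,\infty}\subset W^{\alpha-\epsilon,1+\epsilon}$). Your sketch does not supply any substitute mechanism for controlling the transport phase when $u$ is not a function, so as written the argument does not close.

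Two further, smaller remarks. First, for the lower bound on the rate \eqref{eq:mainupper}, you propose to re-derive the hypocoercive functional from scratch; the paper instead invokes Wei's abstract estimate (\cref{lemma:b1}, cited from \cite{Wei2021}) applied to smooth approximants $u^n$, with the only modification being that $\omega_1$ is defined through $\partial_y U$ rather than a generic primitive. This is a legitimate difference in style, but note that the stability of $\omega_1$ under $L^2$-approximation of $U^n\to U$ is what lets the paper pass from smooth $u^n$ to $u\in B^\alpha_{1,\infty}$; your sketch should make this approximation step explicit since the hypocoercive computation you outline presupposes a function-valued $u$. Second, for prevalence, your probe measure (a random dyadic series with anticoncentration estimates) is a plausible alternative to the paper's choice (fractional Brownian motion with strong local nondeterminism, whose key virtue is that SLND is preserved under adding \emph{any} deterministic shift), but you omit the step of transporting prevalence through the map $u\mapsto\partial_y U$: the condition in \cref{def:ourWei} lives on $\partial_y U$, not on $u$, so one must push the probe measure through a linear continuous injection (here the composition of the elliptic solver with $\partial_y$), which is handled in the paper by \cref{prop:abstract}. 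That step is necessary and absent from your plan.
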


Finally, we point out that our choice of $\alpha$ prevents us from studying a non-viscous version of the problem, considered instead in \cite{Col21} and \cite{GalGub2023} to prove inviscid mixing and study its connection with enhanced diffusion, because the equation now needs the regularising effect of the Laplacian in order to be well-posed.
Indeed, our result goes beyond the novelty of distributional advecting velocities, and it can be seen as a \emph{triviality result} for the inviscid transport equation with a very rough shear velocity.
\cref{th:main} above shows that energy dissipates faster and faster as the viscosity vanishes. In other words, regardless of the initial condition, the vanishing viscosity solution of the inviscid transport equation with a very rough advecting velocity is the zero solution.

Our work is organized as follows. \cref{sec:mainresult} contains formal definitions of the main concepts we use, namely diffusion enhancement and the $\alpha$-irregularity condition.
Afterwards, \cref{sec:enhdiff} presents the proofs of the bounds on the diffusion rate, \cref{sec:Wei} discusses in details the irregularity condition, and \cref{sec:prevalence} shows how ``almost every'' element of $B_{1,\infty}^\alpha$ satisfies them.

%%%%%%%%%%%%%%%%%%%%%%%%%%%%%%%%%%%%%%%%
%%%%%%%%%%%%%%%%%%%%%%%%%%%%%%%%%%%%%%%%
%%%%%%%%%%%%%%%%%%%%%%%%%%%%%%%%%%%%%%%%

\section{Preliminaries}\label{sec:mainresult}

Let us first state a formal definition of \textit{enhanced dissipation}.
To this end, we already discussed how it is convenient to consider the hypoelliptic counterpart of our problem,
\begin{equation}\label{eq:hypoelliptic}
    \partial_t f + u \partial_x f = \nu \partial_y^2 f,
\end{equation}
and particularly its Fourier-transformed version
\begin{equation}\label{eq:hypofourier}
    \partial_t f^k + ik u f^k = \nu \partial_y^2 f^k,
\end{equation}
where, for $k\in \Z$, $f^k$ is the $k$-th Fourier coefficient of $f$ with respect to the horizontal variable $x$.
We shall denote by $e^{t(-\imath k u + \nu \partial^2_y)}$ the solution operator of the above equation, for every fixed $k$.
Notice also that one can restrict to the case $k=\pm1$, up to a rescaling of $t$ and $\nu$.

The reason behind the hypoelliptic choice leading to \cref{eq:hypofourier} is that it is easier to treat and its solutions have the same properties as those of \cref{eq:gg21}.
Indeed, consider the Fourier transform in the $x$-variable of the latter, i.e.
\begin{equation}\label{eq:gg21fourier}
    \partial_t f^k + \imath k u f^k = \nu (-k^2 + \partial^2_y) f^k :
\end{equation}
$f^k(t,y)$ solves \cref{eq:gg21fourier} if and only if $g^k(t,y) \coloneqq e^{-k^2t} f^k(t,y)$ solves equation \cref{eq:hypofourier}.
Moreover, such $g^k$ would have the same spacial regularity as $f^k$.
We refer to \cref{app:wellposedness} for a proof of well-posedness of \cref{eq:hypofourier}.

The next definition clarifies what we intend for \textit{enhanced dissipation}, in terms of behaviour of solutions to \cref{eq:gg21} at each Fourier level $f^k$.

\begin{definition}\label{def:enhdiss}
    A velocity field $u$ is said to be \emph{diffusion enhancing} on $L^2(\Tb;\C)$ with rate $r$ if there exists a constant $C>0$ such that
    \begin{equation}\label{eq:enhdiss}
        \norm{e^{t(-\imath k u + \nu \partial^2_y)}}{L^2\rightarrow L^2}
        \leq C \exp\bigl( - r\bigl(\tfrac{\nu}{|k|}\bigr)|k| t \bigr),
    \end{equation}
    for all $k\in\Z$, $\nu\in(0,1)$, and $t\geq1$.
\end{definition}

\begin{remark}
    Notice that, since
    \[
        \mathrm{Re}\int_\Tb i k u f^k \overline{f^k} \,dy
        = 0,
    \]
    by a simple energy estimate every $u$ is diffusion enhancing with rate $r(\nu)=\nu$, as long as the problem is well posed.
    Our goal is then to describe the optimal diffusion rate.
\end{remark}

We do not expect our result to hold in general for any shear flow.
Indeed, it is the irregularity of the advective velocity that plays a major role.
As in \cite{Wei2021,GalGub2023}, we are going to identify an \emph{irregularity condition} on $u$ that ensures enhanced dissipation. Preliminarly, define an elliptic regularization $U$ of $u$ via
\begin{equation}\label{def:ellipreg}
    \begin{cases}
        - \partial_{y}^2 U
        = u - \bar u,
        \qquad\text{on }\Tb, \\
        \int_\Tb U(y)\,dy
        = 0,                 \\
        U\text{ periodic on }\Tb,
    \end{cases}
\end{equation}
where $\bar u$ is the average of $u$ on $\Tb$. Our version of \emph{Wei's irregularity condition} is given as follows.
\begin{definition}\label{def:ourWei}
    A velocity $u$ is \emph{$\alpha$-irregular} if
    \begin{equation}\label{eq:ourWei}
        \inf_{\substack{\leb{I}\leq 1\\ P\in\Pc_1}}
        \leb{I}^{-\alpha}\Bigl(\avint_I |\partial_y U(y) - P(y)|^2\,dy\Bigr)^{\frac12}
        >0,
    \end{equation}
    where $\leb{I}$ is the Lebesgue measure of an interval $I\subseteq\Tb$ and $\Pc_1$ is the space of polynomials of degree at most $1$.
\end{definition}

\begin{remark}
    The above is much similar to the irregularity condition given in \cite{GalGub2023}, which in turn extends the one found in \cite{Wei2021}.
    With respect to \cite{GalGub2023}, however, we have chosen a specific primitive of $u$: this will allow to exploit the one-to-one relation between the two.
\end{remark}

%%%%%%%%%%%%%%%%%%%%%%%%%%%%%%%%%%%%%%%%
%%%%%%%%%%%%%%%%%%%%%%%%%%%%%%%%%%%%%%%%
%%%%%%%%%%%%%%%%%%%%%%%%%%%%%%%%%%%%%%%%

\section{Enhanced diffusion}\label{sec:enhdiff}

We devote this section to prove the first claim of \cref{th:main}. The proof can be naturally split in two parts: the first covers the upper bound on the dissipation rate, while the second focuses on the lower one.

%%%%%%%%%%%%%%%%%%%%
%%%%%%%%%%%%%%%%%%%%
%%%%%%%%%%%%%%%%%%%%

\subsection{Lower bound}

Proving a lower bound on the dissipation of the energy of solutions,
\[
    \norm{e^{-iktu + \nu t\partial_y^2}}{L^2(\Tb;\C)\rightarrow L^2(\Tb;\C)}
    = \sup_{\substack{f\in L^2(\Tb;\C) \\ \norm{f}{{L^2}}=1}}
    \int_\Tb e^{t(-\imath k u + \nu \partial^2_y)} f \,dy ,
\]
essentially reduces to show that the dissipation rate $r$ can be controlled from above by some function of the diffusion coefficient $\nu$.
In particular, we focus here on \cref{eq:mainlower}, i.e.
\begin{equation}\label{eq:lowbound}
    r(\nu)
    \lesssim \nu^{-\frac{1/2-\bar\alpha}{5/2+\bar\alpha}}
    \qquad
    \forall \ \nu\in(0,1) , \ \bar\alpha < \alpha ,
\end{equation}
with $\alpha\in(-\frac12,0)$ being the regularity coefficient of the Besov space $B_{1,\infty}^\alpha$ to which $u$ belongs.

In order to recover \cref{eq:lowbound}, we consider the hypoelliptic problem \cref{eq:hypofourier} and study its Fourier transform in the first component,
\begin{equation}\label{eq:fourier}
    \partial_t f + \imath\xi u f = \nu \partial_y^2 f,
\end{equation}
with $f:\Tb\to\C$ and $\xi\in\R$.
We then start by recalling two preliminary lemmas from \cite{GalGub2023}.
While the first, \cref{lemma:feynman-kac}, can be used as is, the second, \cref{lemma:continuity}, which is the key to our result, modifies \cite[Lemma 4.3]{GalGub2023} to cover the case of negative $\alpha$.

\begin{lemma}[{\cite[Lemma 4.2]{GalGub2023}}]\label{lemma:feynman-kac}
    Let $u \in L^1(\Tb)$ and $f$ be a solution to \cref{eq:fourier} with initial data $f_0 \in L^2(\Tb;\C)$.
    Given, for any $(t,y) \in [0,\infty)\times\Tb$, the complex random variable
    \begin{equation}\label{eq:feynman-kac}
        Z_t^Y
        = \exp \left( -\imath\xi \int_0^t u \left( y + \sqrt{2\nu}B_s \right) ds \right) f_0 \left( y + \sqrt{2\nu} B_t \right) ,
    \end{equation}
    where $B$ is a standard, real-valued BM,
    the following Lagrangian Fluctuation-Dissipation relation holds:
    \[
        \norm{f_0}{L^2}^2 - \norm{f_t}{L^2}^2
        = \int_\Tb \operatorname{Var} (Z_t^y) dy .
    \]
\end{lemma}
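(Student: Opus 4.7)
The plan is to combine a Feynman--Kac representation for $f(t,y)$ with the trivial but crucial observation that the complex exponential appearing in $Z_t^y$ has unit modulus. Modulo this representation, the identity should drop out from Fubini and the translation invariance of Lebesgue measure on $\Tb$, together with the standard decomposition $\Var(Z)=\E[|Z|^2]-|\E[Z]|^2$ for a complex-valued random variable $Z$.

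The first (and main) step is to establish
\[
    f(t,y) = \E[Z_t^y],
    \qquad (t,y)\in[0,\infty)\times\Tb.
\]
When $u$ is smooth this is classical: apply It\^o's formula to the process
\[
    s\mapsto \exp\Bigl(-\imath\xi\int_0^s u(y+\sqrt{2\nu}B_r)\,dr\Bigr)\,f(t-s,\,y+\sqrt{2\nu}B_s),
\]
and observe that the drift vanishes precisely because $f$ solves \cref{eq:fourier}; hence the process is a bounded martingale, and equating its values at $s=0$ and $s=t$ yields the formula. For $u\in L^1(\Tb)$, I would mollify $u\rightsquigarrow u_\varepsilon$, write the formula for the corresponding solutions $f_\varepsilon$ of \cref{eq:fourier}, and pass to the limit: stability of \cref{eq:fourier} in $u$ (from the well-posedness discussed in the appendix) gives $f_\varepsilon\to f$, while on the stochastic side $\int_0^t u_\varepsilon(y+\sqrt{2\nu}B_s)\,ds \to \int_0^t u(y+\sqrt{2\nu}B_s)\,ds$ almost surely along a subsequence (the latter integral is well-defined for $L^1$-functions of Brownian motion via occupation-time arguments), so that dominated convergence applies thanks to the unit modulus of the phase factor and the $L^2$ bound on $f_0$.

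Given the Feynman--Kac identity, the rest is a direct computation. Because $|Z_t^y|^2=|f_0(y+\sqrt{2\nu}B_t)|^2$, Fubini and the translation invariance of the Lebesgue measure on $\Tb$ yield
\[
    \int_\Tb \E[|Z_t^y|^2]\,dy
    = \E\Bigl[\int_\Tb |f_0(y+\sqrt{2\nu}B_t)|^2\,dy\Bigr]
    = \norm{f_0}{L^2}^2,
\]
while $\int_\Tb |\E[Z_t^y]|^2\,dy = \norm{f_t}{L^2}^2$ by construction. Subtracting and invoking $\Var(Z_t^y)=\E[|Z_t^y|^2]-|\E[Z_t^y]|^2$ produces the stated fluctuation--dissipation relation. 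The only genuine obstacle is the low-regularity Feynman--Kac step; once it is secured, the concluding identity is purely algebraic.
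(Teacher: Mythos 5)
Your proposal is correct and follows the standard route: Feynman--Kac representation $f(t,y)=\E[Z_t^y]$ via the It\^o/martingale argument for smooth $u$, mollification for $u\in L^1$, and then the purely algebraic conclusion exploiting $|Z_t^y|^2=|f_0(y+\sqrt{2\nu}B_t)|^2$ together with Fubini and translation invariance of Lebesgue measure on $\Tb$. The paper does not reprove this lemma but simply cites \cite[Lemma~4.2]{GalGub2023}, and your argument is essentially the proof given there.
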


Notice that \cref{eq:feynman-kac} is well-defined thanks to the requirement $u\in L^1$.
However, to be able to exploit the Lemma we are going to need an approximation, guaranteed by the embedding $B_{1,\infty}^\alpha\subset W^{\alpha-\epsilon, 1+\epsilon}$ for all $\epsilon>0$ and by the fact that, in the latter topology, the subset of $C^\infty$ functions is dense.

\begin{lemma}\label{lemma:continuity}
    Let $f_0\in H^1(\Tb;\C)$, $u\in B_{1,\infty}^\alpha(\Tb)$, $\alpha\in(-\frac12,0)$, and $\xi\in\R$. Then the corresponding solution $f$ of \cref{eq:fourier} satisfies
    \begin{align*}
        \norm{f_0}{L^2}^2 - \norm{f_t}{L^2}^2
        \leq C \norm{f_0}{H^1}^2
        \left( \nu t + |\xi|^2 \left( \nu^{-1/2+\bar\alpha} t^{3/2+\bar\alpha+2\epsilon} \right)^{\frac{1}{1+\epsilon}}
        \norm{u}{B_{1,\infty}^\alpha}^2 \right) ,
    \end{align*}
    where $\bar\alpha\in(\frac{1}{2}, \alpha)$ and the constant $C>0$ depend on $\alpha$ and a suitable $\epsilon>0$.
\end{lemma}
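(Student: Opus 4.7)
The strategy is to read off the bound from the Feynman--Kac representation of \cref{lemma:feynman-kac}. Since $u\in B^\alpha_{1,\infty}$ is only a distribution when $\alpha<0$, the integrand in \cref{eq:feynman-kac} must first be given meaning through approximation. The plan is to pick a mollification $(u_n)\subset C^\infty(\Tb)$ converging to $u$ in the weaker space $W^{\alpha-\epsilon,1+\epsilon}$, in which $B^\alpha_{1,\infty}$ embeds and smooth functions are dense, establish the claimed bound for each $u_n$ with a constant depending only on $\norm{u_n}{B^\alpha_{1,\infty}}\leq \norm{u}{B^\alpha_{1,\infty}}$, and then pass to the limit relying on the well-posedness result of \cref{app:wellposedness}, which yields continuity of $\norm{f_t}{L^2}$ with respect to the velocity in this weaker topology.

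For each fixed $n$, I would couple two independent real Brownian motions $B^1,B^2$ and use the standard bound
\[
    \Var(Z_t^y)\leq \tfrac12\,\E\bigl|Z_t^{y,1}-Z_t^{y,2}\bigr|^2,
\]
together with the splitting
\[
    Z_t^{y,1}-Z_t^{y,2} = \bigl(e^{-i\xi X_t^1}-e^{-i\xi X_t^2}\bigr) f_0(Y_t^1) + e^{-i\xi X_t^2}\bigl(f_0(Y_t^1)-f_0(Y_t^2)\bigr),
\]
where $Y_t^i=y+\sqrt{2\nu}B_t^i$ and $X_t^i=\int_0^t u_n(Y_s^i)\,ds$. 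After integration in $y$, the second summand is handled by the elementary bound $|f_0(Y_t^1)-f_0(Y_t^2)|^2\leq \norm{\partial_y f_0}{L^2}^2\,|Y_t^1-Y_t^2|$, combined with $\E|Y_t^1-Y_t^2|^2\lesssim \nu t$, producing the harmless $\nu t\,\norm{f_0}{H^1}^2$ contribution. For the first summand, $|e^{-ia}-e^{-ib}|\leq |a-b|$ together with the one-dimensional embedding $H^1(\Tb)\hookrightarrow L^\infty(\Tb)$ reduce the matter to proving
\[
    \int_\Tb \E\bigl|X_t^1-X_t^2\bigr|^2\,dy \lesssim \bigl(\nu^{-\frac{1}{2}+\bar\alpha}\,t^{\frac{3}{2}+\bar\alpha+2\epsilon}\bigr)^{\frac{1}{1+\epsilon}}\,\norm{u_n}{B^\alpha_{1,\infty}}^2,
\]
uniformly in $n$.

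This last display is the heart of the lemma. I would establish it by expanding $u_n$ in Fourier series and computing explicitly the second moments and cross moments of the quantities $\int_0^t e^{i k Y_s^i}\,ds$ via the Gaussian characteristic function, assembling the final expression as a sum $\sum_k |\widehat{(u_n)}_k|^2\,\Psi_k(\nu,t)$ in which $\Psi_k$ displays the short-time vs.\ long-time alternative $\Psi_k\lesssim \min\{t^2,\,(k^2\nu)^{-1}t\}$ characteristic of Brownian occupation of a single Fourier mode. The main obstacle is then to reconcile the $\ell^\infty_k$-summability provided by $u\in B^\alpha_{1,\infty}$ with the $\ell^2_k$-structure produced by the probabilistic bound: I would do this by invoking the embedding $B^\alpha_{1,\infty}\hookrightarrow W^{\bar\alpha,1+\epsilon}$ for any $\bar\alpha<\alpha$ and applying H\"older's inequality in the Fourier sum, at the unavoidable cost of the exponent $\frac{1}{1+\epsilon}$, and by splitting the sum at the critical scale $|k|\sim (\nu t)^{-1/2}$ to recover the precise combination of powers of $\nu$ and $t$ claimed. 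Once this uniform-in-$n$ estimate is secured, the passage to the limit is routine.
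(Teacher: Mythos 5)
Your proposal shares the paper's outer skeleton — decompose $\Var(Z_t^y)=\tfrac12\E|Z_t^{y,1}-Z_t^{y,2}|^2$ using two independent Brownian motions, split off the $f_0$-difference term from the phase-difference term, reduce the latter to a bound on $\int_\Tb\E|X_t^1-X_t^2|^2\,dy$ via $|e^{ia}-e^{ib}|\leq|a-b|$ and $H^1\hookrightarrow L^\infty$, prove everything first for smooth $u$, and pass to the limit using the density of smooth functions in $W^{\alpha-\epsilon,1+\epsilon}$ and the stability from \cref{app:wellposedness}. The central technical step, however, is handled by a genuinely different method. The paper uses the It\^o trick: it solves the backward parabolic problem $\partial_t U-\tfrac\nu2\partial_y^2U=u-\bar u$, $U(0)=0$, applies It\^o's formula to $U(t-s,y+\sqrt\nu B_s)$ to rewrite $\int_0^t u(y+\sqrt\nu B_s)\,ds$ as a deterministic term plus the martingale $\sqrt\nu\int_0^t\partial_yU(t-s,y+\sqrt\nu B_s)\,dB_s$, then uses the It\^o isometry and the parabolic energy estimate of \cref{app:parabolic} to get a bound in terms of $\int_0^t\|\partial_yU(s)\|_{L^2}^2\,ds$ and ultimately $\|u\|_{W^{\alpha-\epsilon,1+\epsilon}}$. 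You instead propose a direct spectral computation: expand $u_n$ in Fourier series, exploit orthogonality after the $y$-integration to reduce to $\sum_k|\widehat{(u_n)}_k|^2\cdot 2\Var\bigl(\int_0^t e^{ik\sqrt{2\nu}B_s}ds\bigr)$, evaluate the Gaussian moments explicitly, and split the $k$-sum at $|k|\sim(\nu t)^{-1/2}$. This is a legitimate alternative, and in fact it is potentially \emph{sharper}: the bound $|\widehat{u}_k|\lesssim|k|^{-\alpha}\|u\|_{B^\alpha_{1,\infty}}$ (which follows directly from $\|\widehat{\Delta_j u}\|_\infty\leq\|\Delta_j u\|_{L^1}$) already makes the dyadically split sum converge for $\alpha\in(-\tfrac12,0)$ and gives $\nu^{-1/2+\alpha}t^{3/2+\alpha}$ with no $\epsilon$-loss; the ``H\"older in the Fourier sum'' step you anticipate seems unnecessary, and the $\tfrac1{1+\epsilon}$ exponent in the statement is an artifact of the paper's route through the parabolic estimate, not something intrinsic to the problem.

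One small correction on the first summand: the pointwise bound $|f_0(Y_t^1)-f_0(Y_t^2)|^2\leq\|\partial_yf_0\|_{L^2}^2|Y_t^1-Y_t^2|$ (Cauchy--Schwarz on the FTC) would, after integrating in $y$ and taking expectations, only produce $\sqrt{\nu t}\,\|f_0\|_{H^1}^2$, not $\nu t\,\|f_0\|_{H^1}^2$ as claimed. You should instead integrate in $y$ \emph{first} and use $\|f_0(\cdot+a)-f_0(\cdot+b)\|_{L^2}\leq|a-b|\,\|\partial_yf_0\|_{L^2}$ (translation estimate from FTC plus Minkowski), then take $\E|a-b|^2\lesssim\nu t$ with $a=\sqrt{2\nu}B_t$, $b=\sqrt{2\nu}\tilde B_t$; this gives the stated $\nu t$ and matches the paper.
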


\begin{proof}
    Start by assuming that $u$ is smooth.
    Let $B$, $\tilde{B}$ be two independent Brownian motions and let $\tilde{Z}_t^y$, $Z_t^y$ be two random variables defined as in \cref{eq:feynman-kac} and depending respectively on $B$ and $\tilde B$.
    By \cref{lemma:feynman-kac},
    \begin{align*}
        \norm{f_0}{L^2}^2 - \norm{f_t}{L^2}^2
        ={}      &
        \int_\Tb \Var ( Z_t^y ) d y                                                                                                                                                                 \\
        ={}      &
        \frac{1}{2} \int_\Tb \E \left[ \left| Z_t^y - \tilde{Z}_t^y \right|^2 \right] d y                                                                                                           \\
        \leq     &
        \E \left[ \int_\Tb \left| f_0 (y + \sqrt{\nu}B_t) - f_0 (y + \sqrt{\nu}\tilde{B}_t) \right|^2 d y \right]                                                                                   \\
                 & + \norm{f_0}{L^\infty}^2 \E \left[ \int_\Tb \left| e^{\imath \xi \int_0^t u(y+\sqrt{\nu}B_s) d s} - e^{\imath \xi \int_0^t u(y+\sqrt{\nu}\tilde{B}_s) d s} \right|^2 d y \right] \\
        \lesssim &
        \norm{f_0}{H^1}^2 \nu \E \left[ | B_t - \tilde{B}_t |^2 \right]                                                                                                                             \\
                 & + \norm{f_0}{L^\infty}^2 |\xi|^2 \E \left[ \left\|\int_0^t u(\cdot+\sqrt{\nu}B_s) - u(\cdot+\sqrt{\nu}\tilde{B}_s) d s\right\|_{L^2}^2 \right] ,
    \end{align*}
    where in the last inequality we used $|e^{i \xi a} - e^{i \xi b}| \lesssim 1 \wedge |\xi| |a-b|$.
    Now, since for $s\in\R$ and $p,q\in[1,\infty]$ the map $f\mapsto\partial_y f$ is a continuous linear operator from $B_{p,q}^s$ to $B_{p,q}^{s-1}$, we can use a so called \emph{It\^o trick} (see for instance \cite{Fla2011}). In other words, we exploit the regularity of the solution $U$ of
    \begin{equation}\label{eq:itotrick}
        \begin{cases}
            \partial_t U - \frac12\nu\partial_y^2 U = u - \bar u, \\
            U(0) = 0, \quad \int_\Tb U = 0,                       \\
            U\text{ periodic on }\Tb
        \end{cases}
    \end{equation}
    on the torus, where $\nu > 0$ is the diffusion parameter coming from \cref{eq:fourier} and $\bar u$ is the mean of $u$ on $\Tb$.
    Indeed, $U$ is regular enough to apply It\^o formula, yielding
    \begin{align*}
        dU(t-s, y+\sqrt{\nu}B_s)
        = & {}
        \sqrt{\nu}\partial_y U(t-s,y+\sqrt{\nu}B_s) dB_s                                                               \\
          & - \left[\partial_s U(t-s, y+\sqrt{\nu}B_s) - \frac{\nu}{2} \partial_y^2 U(t-s, y+\sqrt{\nu}B_s) \right] ds \\
        = & {}
        \sqrt{\nu}\partial_y U(t-s,y+\sqrt{\nu}B_s) dB_s
        + (\bar u - u(y + \sqrt{\nu}B_s)) d s,
    \end{align*}
    with the second equality simply due to \cref{eq:itotrick}.
    We can then integrate on the time interval $[0,t]$ to obtain (recall that $U(0) = 0$),
    \begin{align*}
        \int_0^t u(y+\sqrt{\nu}B_s) d s
        = U(t,y)
        + t\bar u
        + \sqrt{\nu} \int_0^t \partial_y U (t-s, y + \sqrt{\nu}B_s) d B_s ,
    \end{align*}
    so that
    \begin{align*}
        \E \bigg[ \bigg\|\int_0^t u(\cdot+\sqrt{\nu}B_s) - u(\cdot+\sqrt{\nu} & \tilde{B}_s) d s\bigg\|_{L^2}^2 \bigg] \\
                                                                              & \leq
        2 \nu \int_\Tb \E \left[ \left| \int_0^t \partial_y U(t-s, y + \sqrt{\nu}B_s) d B_s \right|^2 \right] d y      \\
                                                                              & =
        2 \nu \int_\Tb \E \left[ \int_0^t \left| \partial_y U(t-s, y + \sqrt{\nu}B_s) \right|^2 d s \right] d y .
    \end{align*}
    Since $B_{1,\infty}^{\alpha+1}\subseteq H^{\bar\alpha+\frac{1}{2}}$ for all $\bar\alpha<\alpha$, we have that $\partial_y U \in L^2(\Tb)$ and we can rewrite the integral on the right-hand side above as
    \begin{multline*}
        \int_\Tb \E \left[ \int_0^t \left| \partial_y U(t-s, y + \sqrt{\nu}B_s) \right|^2 d s \right] d y
        =
        \E \left[\int_0^t \int_\Tb |\partial_y U(t-s,y) |^2 d y d s \right] \\
        =
        \int_0^t \norm{\partial_y U(s,\cdot)}{L^2}^2 d s .
    \end{multline*}
    Eventually, we obtain a bound of the form
    \[
        \norm{f_0}{L^2}^2 - \norm{f_t}{L^2}^2
        \lesssim
        \norm{f_0}{H^1}^2 \left( \nu t + |\xi|^2 \nu t \int_0^t \norm{\partial_y U(s,\cdot)}{L^2}^2 d s \right) .
    \]
    The next step would be to estimate the integral in time of $\norm{\partial_y U(r,\cdot)}{L^2}$ in terms of $\norm{u}{W^{\alpha-\epsilon, 1+\epsilon}} < \norm{u}{B_{1,\infty}^\alpha}$.
    To this regard, we postpone to \cref{app:parabolic} the proof that, under our assumptions on $\epsilon$ and $\bar\alpha$,
    \begin{equation}\label{eq:Uvsu}
        \int_0^t \norm{\partial_y U(r,\cdot)}{L^2}^2 ds
        \leq
        C \left( \nu^{-\frac32+\bar\alpha-\epsilon}  t^{\frac32+\bar\alpha+2\epsilon} \right)^{\frac{1}{1+\epsilon}} \norm{u}{W^{\alpha-\epsilon,1+\epsilon}}^2
    \end{equation}
    for a suitable choice of $\epsilon$ and $\bar\alpha$, which yields
    \begin{equation}\label{eq:smoothversion}
        \norm{f_0}{L^2}^2 - \norm{f_t}{L^2}^2
        \lesssim
        C_{\alpha,\epsilon} \norm{f_0}{H^1}^2 \left( \nu t + |\xi|^2 \left( \nu^{-1/2+\bar\alpha} t^{5/2+\bar\alpha+3\epsilon} \right)^{\frac{1}{1+\epsilon}} \norm{u}{W^{\alpha-\epsilon, 1+\epsilon}}^2 \right) .
    \end{equation}
    At this point, we can consider a generic $u\in B_{1,\infty}^\alpha$ and approximate it in $W^{\alpha-\epsilon,1+\epsilon}$ with a sequence $\{u^n\}_{n}$ of smooth functions.
    Since this also implies that the corresponding solutions to problem \cref{eq:fourier}, $\{f^n\}_n$, converge in $L^2$ (see \cref{app:wellposedness}), we eventually have that the bound \cref{eq:smoothversion} holds for every $u\in B_{1,\infty}^\alpha$ with the related norm, hence the thesis.
\end{proof}

With \cref{lemma:continuity} at hand, we can now turn to the proof of inequality \cref{eq:lowbound}.

\begin{proposition}
    Let $u\in B_{1,\infty}^\alpha$, $\alpha\in(-\frac{1}{2},0)$.
    If $u$ is diffusion enhancing, in the sense of \cref{def:enhdiss}, then necessarily its diffusion rate $r(\nu)$ satisfies
    \[
        r(\nu)
        \leq
        C \nu^{-1/2+\bar\alpha} t^{5/2+\bar\alpha}
    \]
    for all $\nu\in(0,1)$ and $-\frac12<\bar\alpha<\alpha$, with $C>0$ being a constant depending only on $\bar\alpha$ and $\norm{u}{B_{1,\infty}^\alpha}$.
\end{proposition}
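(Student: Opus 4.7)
The strategy couples the upper bound on the energy loss from \cref{lemma:continuity} with the exponential decay granted by diffusion enhancement. By the time-and-viscosity rescaling recorded after \cref{def:enhdiss}, I may work at $k = \xi = 1$ in~\eqref{eq:fourier}. Fix once and for all a test initial datum $f_0 \in H^1(\Tb;\C)$ with $\norm{f_0}{L^2} = 1$ and $\norm{f_0}{H^1} \leq M$ universal (for instance $f_0(y) = e^{2\pi i y}$). For every $T \geq 1$, the hypothesis yields
\[
  \norm{f_T}{L^2}^2 \leq C^2 e^{-2 r(\nu) T},
\]
while \cref{lemma:continuity} (for any admissible $\epsilon > 0$) gives
\[
  \norm{f_T}{L^2}^2 \geq 1 - K_{\alpha,\epsilon} M^2 \Bigl( \nu T + \bigl(\nu^{-1/2+\bar\alpha} T^{5/2+\bar\alpha+3\epsilon}\bigr)^{\frac{1}{1+\epsilon}} \norm{u}{B^\alpha_{1,\infty}}^2 \Bigr).
\]

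I then choose $T = T_\nu \geq 1$ so that the bracketed expression above equals, say, $1/(2 K_{\alpha,\epsilon} M^2)$. For small $\nu$ the second contribution dominates, pinning down $T_\nu \sim c_{\bar\alpha,\epsilon}\,\nu^{(1/2-\bar\alpha)/(5/2+\bar\alpha+3\epsilon)}$. Substituting back in the enhancement estimate gives $\tfrac{1}{2} \leq C^2 e^{-2 r(\nu) T_\nu}$, whence
\[
  r(\nu) \leq \frac{\log(2 C^2)}{2\, T_\nu} \lesssim \nu^{-(1/2-\bar\alpha)/(5/2+\bar\alpha+3\epsilon)}.
\]
Since $\nu \in (0,1)$, the exponent $-(1/2-\bar\alpha)/(5/2+\bar\alpha+3\epsilon)$ is strictly greater than $-(1/2-\bar\alpha)/(5/2+\bar\alpha)$, and the derived inequality already entails the target bound, for any $\bar\alpha \in (-\tfrac{1}{2},\alpha)$ and with $C$ allowed to depend on a fixed choice of $\epsilon > 0$.

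The main technical obstacle is reconciling the optimal $T_\nu$, which tends to $0$ as $\nu \to 0$, with the constraint $T \geq 1$ imposed by \cref{def:enhdiss}. In the regime $T_\nu \geq 1$ the argument above is direct; otherwise one must either apply the enhancement inequality at $T = 1$ (absorbing the resulting factor into the constants) or invoke the rescaling in the integer Fourier parameter $k$ in order to trade a non-admissible short time for an admissible time at a higher Fourier frequency, while carrying the additional $|k|^2$ factor appearing in \cref{lemma:continuity}. Tracking the constants through this reduction and verifying that the exponent in $\nu$ is preserved is the most delicate step of the argument.
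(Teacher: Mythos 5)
Your proof takes essentially the same route as the paper: combine the diffusion-enhancement bound from \cref{def:enhdiss} with the energy-loss estimate of \cref{lemma:continuity}, and optimize over the time horizon. The difference is in presentation: the paper argues by contradiction (it assumes $\liminf_{\nu\to 0}\nu^{(1/2-\bar\alpha)/(5/2+\bar\alpha+3\epsilon)}r(\nu)=\infty$, chooses a sequence of times $t_n=(r(\nu_n)\nu_n^{-(1/2-\bar\alpha)/(5/2+\bar\alpha+3\epsilon)})^{-1/2}$, and derives the impossible inequality $1\leq 0$ in the limit), whereas you solve for $r(\nu)$ directly by picking $T_\nu$ so that the energy-loss bracket is $\sim 1/2$ and then reading off $r(\nu)\lesssim T_\nu^{-1}$. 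Your direct version is cleaner and makes the mechanism of the bound more transparent. The algebra is correct; dropping the $\epsilon$ afterwards is justified exactly as you say since $\nu<1$ and the modified exponent is less negative.

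On the obstacle you flag: you are right that the optimal $T_\nu\sim\nu^{(1/2-\bar\alpha)/(5/2+\bar\alpha+3\epsilon)}$ tends to $0$, which clashes with the restriction $t\geq 1$ in \cref{def:enhdiss}. What you should know is that the paper's own proof has the identical issue and does not address it: under its contradiction hypothesis, $r(\nu_n)\nu_n^{-(1/2-\bar\alpha)/(5/2+\bar\alpha+3\epsilon)}\to\infty$, so the chosen $t_n\to 0$, yet the displayed inequality chain is stated only ``for all $\nu\leq 1$ and $t\geq1$.'' Your two proposed repairs are also not straightforward to carry out: applying the estimates at $T=1$ makes the right-hand side of \cref{lemma:continuity} explode as $\nu\to 0$, so the lower bound on $\|f_1\|_{L^2}^2$ becomes vacuous; and trading a small time for a large Fourier mode $k$ reintroduces the $|k|^2$ in \cref{lemma:continuity}, which again blows up. The most economical resolution, and the one implicitly used elsewhere in the paper (compare \cref{prop:upbound}, which states the semigroup bound for all $t\geq 0$), is to interpret \cref{def:enhdiss} as holding for all $t\geq 0$; under that reading both your proof and the paper's go through without incident. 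It would be worth making this explicit, since as written the $t\geq1$ restriction is genuinely incompatible with the choice of time scales.
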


\begin{remark}
    Unfortunately, we have not been able to find a lower bound on the exponent of the dissipation rate that matches the corresponding upper bound.
    \cite{GalGub2023} finds the optimal rate $\frac\alpha{\alpha+2}$ for $\alpha\in(0,1)$, and we expect this same exponent to be optimal for $\alpha\in(-\frac12,0)$.
    However, in our case we cannot directly exploit the equivalent formulation of Besov norms based on the incremental ratio of $u$, like in \cite{GalGub2023}, as it only works for positive $\alpha$.
    Instead, we are forced to go through the It\^o trick and inequality \cref{eq:Uvsu}, which yield a bound with coefficients $( \nu^{-1/2+\bar\alpha} t^{5/2+\bar\alpha+3\epsilon} )^{1/(1+\epsilon)}$ instead of $\nu^{\alpha/2} t^{1+\alpha/2}$.

\end{remark}

\begin{proof}
    Choose, as in \cref{lemma:continuity}, $\epsilon\in(0,\alpha+\sqrt{\alpha^2+4\alpha+2})$ such that $\bar\alpha = \alpha - \epsilon(\frac12 - \alpha + \epsilon)$ belongs to $(-\frac12,\alpha)$ and assume, by contradiction, that
    \[
        \liminf_{\nu\rightarrow0^+} \nu^{\frac{1/2-\bar\alpha}{5/2+\bar\alpha+3\epsilon}} r(\nu) = +\infty .
    \]
    As in \cite{GalGub2023}, one can then choose $f_0\in H^1$, $\norm{f_0}{L^2} = 1$; by \cref{def:enhdiss} and \cref{lemma:continuity}, there exist $C_1,C_2>0$ depending on $\bar\alpha$ such that, for all $\nu\leq1$ and $t\geq1$,
    \begin{align*}
        1 - C_1 e^{-r(\nu)t}
         & \leq
        1 - \norm{e^{-iktu + \nu t\partial_y^2}g_0}{L^2}^2 \\
         & \leq
        C_2 \norm{f_0}{H^1}^2 \left( \nu t + |\xi|^2 \left( \nu^{-1/2+\bar\alpha} t^{3/2+\bar\alpha+2\epsilon} \right)^{\frac{1}{1+\epsilon}} \norm{u}{B_{1,\infty}^\alpha}^2 \right) .
    \end{align*}
    Fix $\xi=1$.
    If $\{\nu_n\}_n$ is a sequence realizing the limit and we consider a sequence of times $\{t_n\}_n$ of the form
    \[
        t_n
        =
        \left( r(\nu_n) \nu_n^{-\frac{1/2-\bar\alpha}{5/2+\bar\alpha+3\epsilon}} \right)^{-\frac{1}{2}} ,
    \]
    then
    \begin{multline}
        1 - C_1 \exp \left( - \left(r(\nu_n)\nu_n^{\frac{1/2-\bar\alpha}{5/2+\bar\alpha+3\epsilon}} \right)^{\frac{1}{2}} \right) \\
        \lesssim
        \left( r(\nu_n) \nu_n^{\frac{1/2-\bar\alpha}{5/2+\bar\alpha+3\epsilon}} \right)^{-\frac{1}{2}} \nu_n^{\frac{3(1+\epsilon)}{5/2+\bar\alpha+3\epsilon}}
        + \left( r(\nu_n) \nu_n^{\frac{1/2-\bar\alpha}{5/2+\bar\alpha+3\epsilon}} \right)^{-\frac{1+\epsilon}{2} \left( \frac{5}{2} + \bar\alpha + 3\epsilon \right)} .
    \end{multline}
    Now, both terms on the right-hand side trivially go to $0$ when $n\rightarrow\infty$, because $5/2+\bar\alpha + 3\epsilon > 0$.
    Since the left-hand side converges instead to $1$, this means that in the limit we have $1\leq0$: therefore, it must hold instead that
    \[
        r(\nu) \leq C \nu^{- \frac{1/2-\bar\alpha}{5/2+\bar\alpha+3\epsilon}} .
    \]
    At this point we can neglect $\epsilon$ and generalise to any $\bar\alpha$ in the interval $(-\frac12,\alpha)$, which is the result we were looking for.
\end{proof}

%%%%%%%%%%%%%%%%%%%%
%%%%%%%%%%%%%%%%%%%%
%%%%%%%%%%%%%%%%%%%%

\subsection{Upper bound}

In the following, we shall use the notion of irregularity provided by $\Lambda(\alpha, m, p, g)$, as defined in \cref{def:ourWei}.
We refer to \cref{sec:Wei} for a generalization and a more detailed description of its properties.

This means that we will found ourselves in the $\alpha$-irregularity regime (see \cref{def:ourWei}).
The key role of such setting is highlighted by \cref{prop:upbound} below, which states a sufficient condition to have an upper bound on the dissipation rate in the case $\alpha<0$.
In particular, such bound is going to have the same form of that found by the authors of \cite{GalGub2023}, hence extending their result to negative regularity Besov spaces.
The reader willing to compare the two works should be advised, however, that the notation does not exactly coincide, as there is a shift in regularity for the $f$ appearing in \cref{def:ourWei}: in \cite{GalGub2023}, $f$ is a derivative of what actually appears in our formulation of $\Lambda$.

In order to prove our result, we need a Lemma contained in \cite{Wei2021}.
The only difference, which does not really impact the proof, is that we have to redefine the quantity $\omega_1$ found there, choosing instead
\[
    \omega_1(\delta,u) \coloneqq \inf_{\bar{y}\in\Tb, P\in\Pc_1} \int_{\bar{y}-\delta}^{\bar{y}+\delta} |\partial_y U(y) - P(y) |^2 d y , \qquad \delta\in (0,1) .
\]
Indeed, we exploit the regularity properties of $\partial_y U$, with $U$ solving
\begin{equation}\label{eq:itotrickII}
    \begin{cases}
        -\partial_y^2 U = u - \bar u, \\
        \int_\Tb U = 0 , \ U \text{ periodic on } \Tb
    \end{cases}
\end{equation}
(where again $\bar u$ is the mean of $u$ on $\Tb$) instead of those of a general primitive of $u$.

\begin{lemma}[{\cite[Lemma 4.3]{Wei2021}}]\label{lemma:b1}
    Let $u \in C(\Tb)$, fix $\nu>0$ and set $F: [0,\infty) \rightarrow[0, \pi/2]$ to be the one-to-one, increasing inverse of $x \mapsto 36 x \tan x$.
    Then, for all $\delta \in(0,1)$ and $t \geq 0$, it holds
    \[
        \norm{e^{t\left(\nu\partial_y^2 - \imath u\right)}}{L^2 \rightarrow L^{2}}
        \leq
        \exp \left( \frac{\pi}{2} - t \nu \delta^{-2} F \left( \delta \nu^{-2} \left( \omega_1 (\delta, u) \right)^2 \right) \right) .
    \]
\end{lemma}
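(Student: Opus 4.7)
The statement is essentially Wei's Lemma~4.3 from \cite{Wei2021}, recast with the present notational conventions. My proof plan is therefore to follow Wei's argument and verify that the redefined $\omega_1$ fits into that argument without further modification. The core strategy has three ingredients, which I would structure as follows.

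First, I would reduce the semigroup bound to a pseudospectral bound via a Gearhart--Pr\"uss style argument. Concretely, one proves
\[
    \sup_{\lambda \in \R} \bigl\| (\nu \partial_y^2 - \imath (u - \lambda))^{-1} \bigr\|_{L^2 \to L^2} \leq \nu^{-1}\delta^2\, F\bigl(\delta \nu^{-2} \omega_1(\delta,u)^2\bigr)^{-1},
\]
and then a quantitative Gearhart--Pr\"uss theorem (or its Helffer--Sj\"ostrand variant, which is where the universal constant $\tfrac{\pi}{2}$ in the exponential prefactor comes from) converts this into the stated exponential decay of $e^{t(\nu\partial_y^2 - \imath u)}$.

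Second, the resolvent bound itself is to be obtained via a local coercivity estimate on intervals of length $2\delta$. Fixing $\lambda \in \R$ and $f \in H^1(\Tb)$, on each such interval $I$ one pairs $(\nu\partial_y^2 - \imath(u-\lambda))f = g$ with $f$ and separates real and imaginary parts to get control on $\nu\norm{\partial_y f}{L^2(I)}^2$ and on $\int_I (u-\lambda)|f|^2\,dy$. Either $u - \lambda$ stays comparable to a large constant on $I$, giving the bound directly, or one has to use the oscillation of $u$ around its mean on $I$. To quantify the latter, one integrates by parts twice against $-\partial_y^2 U = u - \bar u$: the two successive integrations by parts each leave boundary terms that can be absorbed into $\Pc_1$, so the ambiguity by polynomials of degree one in the present definition of $\omega_1$ is exactly what is needed to produce a lower bound of the desired form in terms of $\omega_1(\delta,u)$. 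This is the only step where Wei's original formulation and ours differ, and the difference is purely cosmetic.

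Third, one sums the local estimates over an overlapping cover of $\Tb$ of size $O(\delta^{-1})$ and optimizes the balance between the diffusive contribution $\nu \norm{\partial_y f}{L^2}^2$ and the oscillation contribution. The implicit function $F$, defined as the inverse of $x \mapsto 36 x \tan x$, arises from an explicit variational problem on a single interval of length $2\delta$ with trial functions of trigonometric form, hence the appearance of $\tan$ and the specific constant $36$.

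The main obstacle I would anticipate is the bookkeeping in the second step: keeping track of precisely which primitive of $u$ the argument uses, how the degree of polynomial ambiguity ($\Pc_0$ versus $\Pc_1$) matches the number of integrations by parts, and ensuring that the regularity $u \in C(\Tb)$ is sufficient for all manipulations (in particular, so that $\partial_y U \in C^1(\Tb)$ and the pointwise traces entering the boundary terms are legitimate). No new analytic ideas beyond Wei's are needed; the proof should be a faithful adaptation.
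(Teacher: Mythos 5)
The paper does not re-prove this lemma: it simply cites Wei's result and inserts a remark that the replacement of a generic primitive of $u$ by $\partial_y U$ from the elliptic problem \eqref{eq:itotrickII} ``does not really impact the proof.'' Your proposal takes the same stance, but spells out why, and in doing so gives a correct account of the three ingredients of Wei's argument: the quantitative Gearhart--Pr\"uss (Helffer--Sj\"ostrand) theorem producing the $e^{\pi/2}$ prefactor, the local resolvent/coercivity estimate on intervals of length $2\delta$, and the optimization that generates $F = (36\,x\tan x)^{-1}$.

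Your verification of the harmlessness of the notational change is the one substantive point, and it is right: $\partial_y U$ differs from any other primitive of $\pm(u-\bar u)$ by an additive constant, and the mean shift $\bar u$ contributes an affine term after integration; both are killed by the infimum over $\Pc_1$, and the sign by the modulus. So $\omega_1$ as redefined agrees with Wei's quantity. Your heuristic that ``two integrations by parts $\leftrightarrow$ $\Pc_1$'' is a slightly loose way of expressing this (the degree-one freedom matches one primitive-taking plus the spectral shift $\lambda$, not two integrations by parts per se), but this is cosmetic and does not undermine the conclusion. In short, your plan is consistent with, and more explicit than, the paper's treatment of the lemma.
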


\cref{lemma:b1} is the key to prove the following.

\begin{proposition}\label{prop:upbound}
    For $\alpha\in(-\frac{1}{2},0)$, let $u\in B_{1,\infty}^\alpha$ be such that $\Lambda(\alpha+1, 1, 2, \partial_y U)>0$, with $U$ being the associated solution of \cref{eq:itotrickII}.
    Then $u$ is diffusion enhancing with rate $r(\nu) \gtrsim \nu^{\alpha/(\alpha+2)}$, since
    \begin{equation}\label{eq:upbound}
        \norm{e^{t(\nu\partial_y^2 - \imath ku)}}{L^2\rightarrow L^2}
        \leq
        C_1 \exp \left( C_2 \nu^{\frac{\alpha}{\alpha+2}} |k|^{\frac{\alpha}{\alpha+2}} t \right)
        \quad
        \forall \nu\in(0,1) , k\in\Z_0 , t \geq 0 ,
    \end{equation}
    with the constants $C_1,C_2\geq0$ depending on $\alpha$, $\semi{u}{ B_{1,\infty}^\alpha}$ and $\Lambda(\alpha+1, 1, 2, \partial_y U)$.
\end{proposition}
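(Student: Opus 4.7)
The plan is to combine Wei's resolvent estimate \cref{lemma:b1} with the irregularity lower bound on $\partial_y U$ provided by the hypothesis $\Lambda(\alpha+1, 1, 2, \partial_y U)>0$, then optimise over the free parameter $\delta$. First I would reduce to the case $|k|=1$ by a standard time rescaling: setting $g(s,y) := f(s/|k|,y)$ converts the Fourier-transformed equation into $\partial_s g \pm iug = (\nu/|k|)\partial_y^2 g$, and since $u$ is real the sign in front of $u$ is irrelevant for the $L^2\to L^2$ operator norm. It therefore suffices to prove the desired exponential decay at effective diffusivity $\tilde\nu := \nu/|k|$ and new time $s = |k|t$, from which the $(\nu, k)$-dependence claimed in \cref{eq:upbound} follows by substitution.

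The key quantitative input is a polynomial lower bound on the oscillation quantity entering \cref{lemma:b1}. Applying \cref{def:ourWei} with parameter $\alpha+1$ to $\partial_y U$, the hypothesis reads
\[
    \Bigl(\avint_I |\partial_y U - P|^2\,dy\Bigr)^{1/2}
    \geq c_0\, \leb{I}^{\alpha+1}
\]
for every $I \subset \Tb$ with $\leb{I} \leq 1$ and every $P \in \Pc_1$, where $c_0 := \Lambda(\alpha+1, 1, 2, \partial_y U) > 0$. Specializing to $I = [\bar y - \delta, \bar y + \delta]$ (so $\leb{I} = 2\delta$), squaring and multiplying through by $\leb{I}$, then taking the infimum over $\bar y, P$, yields
\[
    \omega_1(\delta, u)^2 \gtrsim \delta^{2\alpha+3},
\]
up to universal constants and the normalisation of $\omega_1$ consistent with its meaning as a squared $L^2$-oscillation in \cref{lemma:b1}.

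With this bound available, I would choose $\delta := c_1 \tilde\nu^{1/(\alpha+2)}$ for a constant $c_1 > 0$ tuned so that the argument of $F$ is a positive constant independent of $\tilde\nu$: indeed,
\[
    \delta \tilde\nu^{-2} \omega_1(\delta, u)^2
    \gtrsim \tilde\nu^{-2} \delta^{2\alpha+4}
    = c_1^{2\alpha+4}.
\]
Because $F$ is continuous, strictly increasing on $[0, \infty)$ and positive away from zero, this gives $F(\delta \tilde\nu^{-2} \omega_1^2) \geq F_0 := F(c_1^{2\alpha+4}) > 0$ uniformly in $\tilde\nu$, and \cref{lemma:b1} delivers
\[
    \norm{e^{t(\tilde\nu \partial_y^2 - iu)}}{L^2 \to L^2}
    \leq \exp\!\Bigl( \tfrac{\pi}{2} - t \tilde\nu \delta^{-2} F_0 \Bigr).
\]
The chosen scale gives $\tilde\nu \delta^{-2} = c_1^{-2} \tilde\nu^{\alpha/(\alpha+2)}$, exactly the claimed rate; undoing the rescaling $\tilde\nu = \nu/|k|$, $s = |k|t$ produces \cref{eq:upbound} with $C_1 = e^{\pi/2}$ and $C_2 = c_1^{-2} F_0$ depending only on $\alpha$ and $c_0$.

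The main obstacle I foresee is the careful bookkeeping that connects the average-$L^2$ irregularity of $\partial_y U$ (encoded by $\Lambda(\alpha+1, 1, 2, \partial_y U)$) to the oscillation quantity in Wei's lemma with the correct $\delta$-exponent, since it is this exponent which singles out the critical scale $\delta \sim \tilde\nu^{1/(\alpha+2)}$ and hence the target rate $\nu^{\alpha/(\alpha+2)}$. Once this accounting is settled, the rest of the proof is essentially a dimensional calculation: the argument sidesteps any delicate asymptotic analysis of $F$ near $0$ or $\infty$ by placing $\delta$ precisely where $F$'s argument is $O(1)$, reducing the optimisation to a single algebraic identity.
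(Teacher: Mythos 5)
Your high-level strategy (Wei's lemma plus the irregularity lower bound on $\omega_1$, choose $\delta\sim\nu^{1/(\alpha+2)}$ so that $F$'s argument is $O(1)$) is exactly the one the paper uses, and your dimensional bookkeeping correctly singles out the critical scale and the rate $\nu^{\alpha/(\alpha+2)}$. However, there is a genuine gap: you apply \cref{lemma:b1} directly to $u$, but that lemma is stated for $u\in C(\Tb)$, while here $u\in B^\alpha_{1,\infty}$ with $\alpha<0$ is only a distribution. The operator $e^{t(\nu\partial_y^2-iu)}$ is defined for such $u$ only as a limit along smooth approximations (cf.\ \cref{app:wellposedness}), so Wei's resolvent bound cannot be invoked off the shelf.

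The paper's proof closes this gap by picking a sequence of smooth $u^n$ whose associated $U^n$ satisfy $\|U^n-U\|_{L^2}\leq 1/n$, applying \cref{lemma:b1} to each continuous $u^n$, and then transporting the irregularity bound from $u$ to $u^n$ via the elementary inequality $a^2\geq b^2/2-(a-b)^2$, which yields
\[
\omega_1(\delta,u^n)\geq \tfrac12\,\omega_1(\delta,u) - 2\delta\,\|U^n-U\|_{L^2}^2 .
\]
After substituting $\delta=\nu^{1/(\alpha+2)}$ the error term $2\delta/n^2$ scales like $\nu^{-2(\alpha+1)/(\alpha+2)}/n^2$ and one must first send $n\to\infty$ at fixed $\nu$ (using monotonicity and continuity of $F$) to recover the claimed uniform bound, before concluding for the limit operator. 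Your sketch omits this approximation-and-limit step entirely, which is the only genuinely non-trivial part of the proof once the dimensional analysis is in place; without it the invocation of \cref{lemma:b1} is not justified for the distributional $u$ in the hypothesis.
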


\begin{proof}
    Without loss of generality, let us consider the case $k=1$.
    Given $u\in B_{1,\infty}^\alpha$, we can take a sequence $\{u^n\}_n$ of smooth functions such that the corresponding solutions $U^n$ of \cref{eq:itotrickII} satisfy $\norm{U^n-U}{L^2} \leq 1/n$, $U$ being the solution of \cref{eq:itotrickII} associated to $u$ (e.g. we can let $u^n\to u$ in a suitable Sobolev space of negative regularity).

    Using the general fact that $a^2\geq b^2/2 - (a-b)^2$, for $\delta\in(0,1)$ it holds
    \begin{multline*}
        \omega_1 (\delta, u^n)
        \geq
        \frac{1}{2} \omega_1 (\delta, u) - \sup_{x \in \Tb} \int_{x-\delta}^{x+\delta} \left|U^n(y)-U(y)\right|^2 d y \\
        \geq
        \frac{1}{2} \omega_1 (\delta, u) - 2 \delta \norm{U^n-U}{L^2}^2
    \end{multline*}
    and, since $\omega_1 (\delta, u) \geq 2^{2 \alpha+3} \delta^{2 \alpha+3} \Lambda(\alpha+1, 1, 2, \partial_y U)^{2}$ by definition, for all $n\in\N$
    \[
        \omega_1 (\delta, u^n)
        \geq
        2^{2 \alpha+2} \delta^{2 \alpha+3} \Lambda(\alpha, 1, 2, u)^2 - 2 \delta \frac{1}{n^2} .
    \]
    By fixing $\nu>0$, $\delta = \nu^{1/(\alpha+2)}$ and applying \cref{lemma:b1} to the continuous functions $u^n$, we can set $C_{1}=e^{\pi / 2}$ and use the monotonicity of $F$ to get
    \[
        \norm{e^{t\left(\partial_y^2-\imath u^n\right)}}{L^2 \rightarrow L^2}
        \leq
        C_1 \exp \left( -t \nu^{\frac{\alpha}{\alpha+2}} F \left( 2^{2 \alpha+2} \Lambda(\alpha, 1, 2, u)^{2} - 2 \nu^{-2\frac{\alpha+1}{\alpha+2}}\frac{1}{n^2} \right) \right)
    \]
    for all $t>0$.

    At this point the proof goes on like that of \cite[Theorem 4.6]{GalGub2023}, allowing to obtain the same upper bound even for negative values of $\alpha$.
\end{proof}

We now have a sufficient condition for $u\in B_{1,\infty}^\alpha$ to be diffusion enhancing, namely that $\Lambda(\alpha+1, 1, 2, \partial_y U) > 0$, and we know that then the rate is also bounded from above in terms of $\nu$.
However, this would not be truly helpful unless we could say something on the subset of $B_{1,\infty}^\alpha$ for which positivity of $\Lambda$ holds.
This is where the notion of prevalence comes in: in fact, we are going to show that the set of $\alpha$-irregular elements is prevalent in the Besov space $B_{1,\infty}^\alpha$, even for negative $\alpha\in(-\frac12,0)$.

%%%%%%%%%%%%%%%%%%%%%%%%%%%%%%%%%%%%%%%%
%%%%%%%%%%%%%%%%%%%%%%%%%%%%%%%%%%%%%%%%
%%%%%%%%%%%%%%%%%%%%%%%%%%%%%%%%%%%%%%%%

\section{About Wei's irregularity index}
\label{sec:Wei}

The purpose of this section is twofold.
On the one end, we analyse thoroughly Wei's irregularity index.
On the other, we aim at putting on solid ground the extension of Wei's condition to distributions.

In the first part we state a slightly more general version of Wei's condition, and give some alternative equivalent formulations.
The second part is devoted to some basic properties, connections with other definitions of irregularity, and a first result of stability.

%%%%%%%%%%%%%%%%%%%%
%%%%%%%%%%%%%%%%%%%%
%%%%%%%%%%%%%%%%%%%%

\subsection{Wei's irregularity index}

For our goal, we first need to generalise Wei's irregularity condition; the original version \cite{Wei2021} corresponds, in the language of the following definition, to the case $k=1$, $p=2$.
We will then proceed by giving some equivalent formulations. Without loss of generality, we restrict to the domain $[0,1]$.

\begin{definition}[Wei's irregularity index]\label{d:lambda1}
    Given $p\in[1,\infty)$ and $f\in L^p(0,1)$, define for $\alpha\in\R$ and $k\in\N$,
    \[
        \Lambda(\alpha,k,p,f)
        = \inf_{\leb{J}\leq 1,P\in\Pc_k}\leb{J}^{-\alpha}\Bigl(\avint_J|f(x) - P(x)|^p\,dx\Bigr)^{\frac1p},
    \]
    where the infimum is extended over intervals $J\subset[0,1]$.
\end{definition}

For simplicity, we denote by $\|\cdot\|_{\Lnorm^p(J)}$ the $L^p$ norm over an interval $J$ with respect to the \emph{normalized} Lebesgue measure on $J$.

We first notice that in the case $p=2$ it is possible to explicitly identify the minimizing polynomials. To be more precise, define the polynomials $Q_0$, $Q_1$, \dots, such that $Q_0\equiv1$, $Q_i$ has degree $i$ and the family $(Q_i)_{i\geq0}$ is the orthonormal basis of $L^2(0,1)$ obtained by the Gram-Schmidt orthogonalization when starting with $Q_0\equiv1$. For any interval $J$ with leftmost point $y$ and length $\delta=\leb{J}$,
\[
    Q^J_i(x)
    = Q_i(\tfrac{x-y}\delta),
    \qquad x\in J,
\]
is an orthonormal basis of $\Lnorm^2(J)$.

\begin{proposition}\label{p:casetwo}
    Given $p\in[1,\infty]$ and $k\geq0$,
    \begin{equation}\label{eq:casetwo}
        \inf_{P\in\Pc_k}\|f-P\|_{\Lnorm^p(J)}
        \leq \|f-P_{k,J,f}\|_{\Lnorm^p(J)}
        \lesssim_{k,p} \inf_{P\in\Pc_k}\|f-P\|_{\Lnorm^p(J)}.
    \end{equation}
\end{proposition}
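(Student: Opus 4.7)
The left inequality holds by definition since $P_{k,J,f}\in\Pc_k$. For the right inequality, the plan is to pivot through a near-$L^p$-best approximant $P^\star\in\Pc_k$ (chosen so that $\|f-P^\star\|_{\Lnorm^p(J)}\leq(1+\varepsilon)\inf_{P\in\Pc_k}\|f-P\|_{\Lnorm^p(J)}$), apply the triangle inequality, and reduce the whole claim to proving the operator-norm bound $\|\Pi_J\|_{\Lnorm^p(J)\to\Lnorm^p(J)}\leq C_{k,p}$ with $C_{k,p}$ independent of $J$, where $\Pi_J\colon g\mapsto P_{k,J,g}=\sum_{i=0}^k\langle g,Q_i^J\rangle_{\Lnorm^2(J)}Q_i^J$ denotes the $\Lnorm^2(J)$-orthogonal projection onto $\Pc_k$. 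Since $\Pi_J$ is linear and fixes every element of $\Pc_k$, one has $P^\star-P_{k,J,f}=\Pi_J(P^\star-f)$, so the triangle inequality gives $\|f-P_{k,J,f}\|_{\Lnorm^p(J)}\leq(1+\|\Pi_J\|)\,\|f-P^\star\|_{\Lnorm^p(J)}$; letting $\varepsilon\to 0$ then yields the proposition.

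For the operator-norm bound itself I would exploit scale invariance. The affine map sending $J$ onto $[0,1]$ preserves normalized $L^p$ norms and transplants each $Q_i^J$ to $Q_i$, so $\|\Pi_J\|_{\Lnorm^p(J)\to\Lnorm^p(J)}=\|\Pi_{[0,1]}\|_{L^p([0,1])\to L^p([0,1])}$ and it suffices to treat the unit interval. There, H\"older's inequality together with the elementary embedding $\|g\|_{L^1([0,1])}\leq\|g\|_{L^p([0,1])}$ (valid for all $p\in[1,\infty]$ since $[0,1]$ has unit measure) yields $|\langle g,Q_i\rangle|\leq\|Q_i\|_{L^\infty}\|g\|_{L^p}$, and similarly $\|Q_i\|_{L^p}\leq\|Q_i\|_{L^\infty}$. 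Summing gives $\|\Pi_{[0,1]}g\|_{L^p}\leq(k+1)\max_{0\leq i\leq k}\|Q_i\|_{L^\infty}^2\,\|g\|_{L^p}$, with constants depending only on $k$ (and trivially on $p$).

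The only real, and minor, obstacle is the bookkeeping that makes the constant scale-invariant: all factors of $\leb{J}^{1/p}$ have to cancel cleanly as $J$ shrinks, which is precisely why the argument relies on the normalized $L^p$ norms on $J$ and on the transplanted orthonormal family $\{Q_i^J\}$ rather than on raw $L^p$ norms and a fixed polynomial basis. Beyond that, the statement is morally just the equivalence of all natural norms on the finite-dimensional space $\Pc_k$, rendered uniform in $J$ by the rescaling above.
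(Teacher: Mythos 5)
Your proof is correct and is essentially the paper's own argument, just repackaged: both proofs reduce the right inequality to a uniform $\Lnorm^p(J)\to\Lnorm^p(J)$ bound on the $\Lnorm^2(J)$-orthogonal projection $\Pi_J$ onto $\Pc_k$, obtained via H\"older together with scale-invariant bounds on the rescaled orthonormal polynomials $Q_i$. The paper phrases the projection bound pointwise (through the chain $\|P-P_{k,J,f}\|\leq\sum_i|\langle P-f,Q_i^J\rangle|\,\|Q_i^J\|$ with $\|Q_i^J\|_{\Lnorm^{p'}(J)}=\|Q_i\|_{L^{p'}(0,1)}$) and works with an arbitrary $P\in\Pc_k$ rather than a near-optimizer $P^\star$, which makes the $\varepsilon\to0$ step in your version unnecessary; it also treats $k=0$ separately with a short Jensen argument, but that is a cosmetic simplification. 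Your $L^\infty$ bounds on $Q_i$ give a slightly cruder constant than the paper's $L^p/L^{p'}$ pairing, but the dependence on $k,p$ alone and uniformity in $J$ are exactly what is required.
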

In other words one can replace the infimum over polynomials in \cref{d:lambda1} with the optimal (and explicit) polynomials of the case $p=2$ and obtain a comparable number, namely,
\[
    \Lambda(\alpha,k,p,f)
    \sim_{k,p} \inf_{\leb{J}\leq1}\leb{J}^{-\alpha}\|f-P_{k,J,f}\|_{\Lnorm^p(J)}.
\]
\begin{proof}[Proof of \cref{p:casetwo}]
    It is elementary to check that if $P\in\Pc_k$, then
    \[
        P(x)
        = \sum_{i=0}^k\langle P,Q^J_i\rangle Q^J_i(x),
        \qquad x\in J,
    \]
    and that if $f\in\Lnorm^2(J)$ and $k\geq0$,
    \begin{equation}\label{eq:poly2}
        P_{k,J,f}
        \vcentcolon= \sum_{i=0}^k\langle f,Q^J_i\rangle Q^J_i
        = \mathop{\text{argmin}}_{P\in\Pc_k}\|f-P\|_{\Lnorm^2(J)}.
    \end{equation}
    We preliminary notice that the inequality on the left-hand side of \eqref{eq:casetwo} is immediate, therefore it is sufficient to prove only the inequality on the right-hand side.
    We start with the case $k=0$. For an interval $J$, denote by $f_J$ the average of $f$ on $J$.
    By Jensen's inequality, for every $c\in\R$, $|c-f_J|\leq \|f-c\|_{\Lnorm^p(J)}$, therefore,
    \[
        \|f-f_J\|_{\Lnorm^p(J)}
        \leq \|f-c\|_{\Lnorm^p(J)} + |f_J-c|
        \leq 2\|f-c\|_{\Lnorm^p(J)},
    \]
    which concludes the proof in the case $k=0$.
    When $k\geq1$ the proof is conceptually similar. Indeed, if $J$ is an interval and $P\in\Pc_k$,
    \[
        |\langle P-f,Q^J_i\rangle|
        \leq \|f-P\|_{\Lnorm^p(J)}\|Q^J_i\|_{\Lnorm^{p'}(J)}
        \leq c(k,p)\|f-P\|_{\Lnorm^p(J)},
    \]
    where $\tfrac1p+\tfrac1{p'}=1$ and
    \[
        c(k,p')
        = \max_{i=0,1,\dots,k}\|Q^J_i\|_{\Lnorm^{p'}(J)}
        = \max_{i=0,1,\dots,k}\|Q_i\|_{L^{p'}(0,1)}.
    \]
    Therefore,
    \[
        \|P-P_{k,J,f}\|_{\Lnorm^p(J)}
        \leq c(k,p)\sum_{i=0}^k |\langle P-f,Q^J_i\rangle|
        \leq (k+1)c(k,p)c(k,p')\|f-P\|_{\Lnorm^p(J)},
    \]
    and
    \[
        \begin{aligned}
            \|f-P_{k,J,f}\|_{\Lnorm^p(J)}
             & \leq \|f-P\|_{\Lnorm^p(J)}
            + \|P-P_{k,J,f}\|_{\Lnorm^p(J)}                                \\
             & \leq \bigl(1+(k+1)c(k,p)c(k,p')\bigr)\|f-P\|_{\Lnorm^p(J)},
        \end{aligned}
    \]
    which concludes the proof.
\end{proof}

Denote by $\Dc$ the set of dyadic intervals of $[0,1]$, and by $\Dc_n\subset\Dc$ the dyadic intervals of length $2^{-n}$.

\begin{proposition}\label{p:dyadic}
    We have that
    \[
        \Lambda(\alpha,k,p,f)
        \sim_{\alpha,p} \inf_n 2^{n\alpha}\min_{J\in\Dc_n}\inf_{P\in\Pc_k}\|f-P\|_{\Lnorm^p(J)}.
    \]
\end{proposition}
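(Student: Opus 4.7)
The plan is to establish the two inequalities that constitute the equivalence. The direction $\Lambda(\alpha,k,p,f) \leq \inf_n 2^{n\alpha}\min_{J\in\Dc_n}\inf_{P\in\Pc_k}\|f-P\|_{\Lnorm^p(J)}$ is immediate: dyadic intervals form a subfamily of the intervals $J\subset[0,1]$ with $\leb{J}\leq 1$, and for $J\in\Dc_n$ we have $\leb{J}^{-\alpha}=2^{n\alpha}$, so the right-hand side is the infimum of the same quantity over a smaller set.

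For the reverse direction the plan is to show that every competitor interval $J$ in the definition of $\Lambda$ can be replaced, up to a multiplicative constant, by a dyadic subinterval of comparable length. Concretely, given $\delta=\leb{J}\in(0,1]$, let $n$ be the smallest non-negative integer with $2^{-n}\leq\delta/2$, so that $\delta/4<2^{-n}\leq\delta/2$. A standard counting argument, based on the fact that consecutive dyadic points $k\cdot 2^{-n}$ lie at distance exactly $2^{-n}\leq\delta/2$, while $J$ has length $\delta\geq 2\cdot 2^{-n}$, produces some $J^\star\in\Dc_n$ with $J^\star\subset J$ and $\leb{J}/4<\leb{J^\star}\leq\leb{J}/2$. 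The inclusion $J^\star\subset J$ is the key virtue: for any $P\in\Pc_k$ (which restricts to a polynomial in $\Pc_k$ on $J^\star$),
$$\|f-P\|_{\Lnorm^p(J^\star)}^p
= \frac{1}{\leb{J^\star}}\int_{J^\star}|f-P|^p
\leq \frac{\leb{J}}{\leb{J^\star}}\|f-P\|_{\Lnorm^p(J)}^p
\leq 4\|f-P\|_{\Lnorm^p(J)}^p.$$
Combining with $\leb{J^\star}^{-\alpha}\leq 4^{|\alpha|}\leb{J}^{-\alpha}$ and taking the infimum over $P\in\Pc_k$ gives
$$2^{n\alpha}\inf_{P\in\Pc_k}\|f-P\|_{\Lnorm^p(J^\star)}
\lesssim_{\alpha,p} \leb{J}^{-\alpha}\inf_{P\in\Pc_k}\|f-P\|_{\Lnorm^p(J)},$$
and taking the infimum over $J$ on the right-hand side (which dominates the infimum over dyadic intervals on the left-hand side) completes the argument.

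I do not expect any real obstacle. The only piece of actual work is the dyadic-inside-an-interval inclusion, which is a textbook trick, and the one small item to keep track of is the sign of $\alpha$ when passing from $\leb{J^\star}^{-\alpha}$ to $\leb{J}^{-\alpha}$; because $\leb{J^\star}\sim\leb{J}$ this loses only an $\alpha$-dependent constant for either sign, which is precisely what the $\sim_{\alpha,p}$ in the statement allows.
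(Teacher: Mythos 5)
Your proof is correct and follows essentially the same route as the paper: one direction is immediate because dyadic intervals form a subfamily, and the other direction is handled by locating, inside an arbitrary competitor $J$, a dyadic subinterval $J^\star$ of comparable length, using $J^\star\subset J$ to compare the normalized $L^p$ norms and $\leb{J^\star}\sim\leb{J}$ to compare the prefactors. The paper states the same comparison more tersely (its proof extracts a dyadic subinterval $J_{n+2}\subset J$ and uses precisely the inequality $4^{-1/p}\|g\|_{\Lnorm^p(J_{n+2})}\leq\|g\|_{\Lnorm^p(J)}$ that you derive), so no further remarks are needed.
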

\begin{proof}
    Given an interval $J\subset[0,1]$, if for an integer $n\geq0$, $2^{-(n+1)}<\leb{J}\leq 2^{-n}$, then there are $J_{n+2}\in\Dc_{n+2}$ and $J_{n-1}\in\Dc_{n-1}$ (unless $n=0$, in that case $J_{n-1}=[0,1]$) such that $J_{n-2}\subset J\subset J_{n-1}$.
    Therefore
    \[
        4^{-\frac1p}\|g\|_{\Lnorm^p(J_{n+2})}
        \leq \|g\|_{\Lnorm^p(J)}
        \leq 4^{\frac1p}\|g\|_{\Lnorm^p(J_{n-1})},
    \]
    and this is sufficient to conclude.
\end{proof}

We conclude this section with some auxiliary results that replace polynomials with discrete derivatives in the definition of Wei's index.

\begin{proposition}\label{p:def_delta}
    Let $\alpha>0$, $k\geq0$, $1\leq p<\infty$,
    \begin{align}
        \label{eq:delta_upper}
         & \inf_{J:\leb{J}\leq 1}\leb{J}^{-\alpha}\sup_{0\leq h\leq\leb{J}}\|\Delta_h^{k+1}f\|_{\Lnorm^p(J)}
        \lesssim_{\alpha,k,p}\Lambda(\alpha,k,p,f),                                                          \\
        \label{eq:delta_lower}
         & \Lambda(\alpha,0,p,f)
        \lesssim_{\alpha,k,p}\inf_{J:\leb{J}\leq 1}\leb{J}^{-\alpha}\bigl\|h\mapsto\|\Delta_h^1f\|_{\Lnorm^p(J)}\bigr\|_{\Lnorm^p(0,\leb{J})},
    \end{align}
    In particular
    \[
        \begin{aligned}
            \Lambda(\alpha,0,p,f)
             & \sim_{\alpha,k} \inf_{J:\leb{J}\leq 1}\leb{J}^{-\alpha}\bigl\|h\mapsto\|\Delta_h^1f\|_{\Lnorm^p(J)}\bigr\|_{\Lnorm^p(0,\leb{J})} \\
             & \sim_{\alpha,k} \inf_{J:\leb{J}\leq 1}\leb{J}^{-\alpha}\sup_{0\leq h\leq\leb{J}}\|\Delta_h^1f\|_{\Lnorm^p(J)}.
        \end{aligned}
    \]
\end{proposition}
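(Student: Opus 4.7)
For \eqref{eq:delta_upper}, I would exploit the fact that $\Delta_h^{k+1}$ annihilates every polynomial of degree at most $k$. Given any interval $J'$ with $\leb{J'}\leq 1$ and any $P\in\Pc_k$, take $J\subseteq J'$ of length $\leb{J'}/(k+2)$ with the same left endpoint. For every $h\in[0,\leb{J}]$ and $x\in J$, the $k+2$ points $x,x+h,\ldots,x+(k+1)h$ all lie in $J'$. Writing $\Delta_h^{k+1}f=\Delta_h^{k+1}(f-P)$ and applying the triangle inequality (the binomial coefficients sum to $2^{k+1}$), together with the crude
\[
\|(f-P)(\cdot+jh)\|_{\Lnorm^p(J)}
\leq \Bigl(\tfrac{\leb{J'}}{\leb{J}}\Bigr)^{1/p}\|f-P\|_{\Lnorm^p(J')}
= (k+2)^{1/p}\|f-P\|_{\Lnorm^p(J')},
\]
yields $\|\Delta_h^{k+1}f\|_{\Lnorm^p(J)}\lesssim_{k,p}\|f-P\|_{\Lnorm^p(J')}$. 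Multiplying by $\leb{J}^{-\alpha}\leq(k+2)^{\alpha}\leb{J'}^{-\alpha}$, then taking the supremum over $h$, the infimum over $P\in\Pc_k$, and finally the infimum over $J'$, gives the claim (the $J$ above is a valid candidate on the left).

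For \eqref{eq:delta_lower}, by \cref{p:casetwo} in the case $k=0$ I may replace the infimum over $\Pc_0$ by the mean $f_J=\leb{J}^{-1}\int_J f$. Writing $f(x)-f_J=\leb{J}^{-1}\int_J(f(x)-f(y))\,dy$, applying Jensen's inequality to the inner integral, integrating in $x$, and changing variable $h=y-x$ yields
\[
\|f-f_J\|_{\Lnorm^p(J)}^p
\leq \frac{1}{\leb{J}^{2}}\int_J\!\!\int_J|f(x)-f(y)|^p\,dy\,dx
= \frac{2}{\leb{J}^{2}}\int_0^{\leb{J}}\!\!\int_{J\cap(J-h)}|\Delta_h^1f(x)|^p\,dx\,dh,
\]
where the factor $2$ comes from the symmetry $h\leftrightarrow -h$. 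Extending the inner integral from $J\cap(J-h)$ up to $J$ (legitimate once $f$ is read through its periodic extension, which is natural since the ambient space is $\Tb$) bounds the right-hand side by $2\,\bigl\|h\mapsto\|\Delta_h^1f\|_{\Lnorm^p(J)}\bigr\|_{\Lnorm^p(0,\leb{J})}^p$. Multiplying by $\leb{J}^{-\alpha}$ and taking infima gives \eqref{eq:delta_lower}. The ``in particular'' equivalences are then immediate: the $\Lnorm^p(0,\leb{J})$-norm in $h$ is trivially dominated by the sup in $h$, while \eqref{eq:delta_upper} with $k=0$ closes the chain.

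The only genuinely subtle point—not really an obstacle—is the boundary/periodicity issue in the lower bound, since $\|\Delta_h^1f\|_{\Lnorm^p(J)}$ requires $f$ on $J+h$. Because the underlying model \eqref{eq:gg21} lives on $\Tb$, reading $f$ periodically is natural and removes the issue cleanly; an alternative would be to restrict the infimum on the right to intervals with $J+\leb{J}\subseteq[0,1]$, which only affects the implicit constant. Beyond this, the calculations are elementary: Jensen plus triangle inequalities, with the main bookkeeping task being to track the normalization factors hidden in $\Lnorm^p$ so that the weights $\leb{J}^{-\alpha}$ on both sides match up correctly.
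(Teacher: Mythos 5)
Your proof is correct and, for the upper bound \eqref{eq:delta_upper}, it is essentially identical to the paper's: pass to a subinterval of length $\leb{J'}/(k+2)$ starting at the same left endpoint, use that $\Delta_h^{k+1}$ annihilates $\Pc_k$, bound the translated norms by the $\Lnorm^p$-norm on the larger interval, and take suprema/infima in the right order.

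For the lower bound \eqref{eq:delta_lower} you take a slightly different, but equally elementary, route. The paper chooses the constant $c=f(y)$ (the left-endpoint value), obtains the pointwise bound $\Lambda(\alpha,0,p,f)^p\leq\delta^{-\alpha p}\avint_0^\delta|f(y+h)-f(y)|^p\,dh$ directly from the definition, and then simply integrates this over $y\in J$ and applies Fubini, landing exactly on the target quantity with constant $1$. You instead choose $c=f_J$, apply Jensen to the double integral $\avint_J\avint_J|f(x)-f(y)|^p$, and then change variables $h=y-x$ using the symmetry to get a factor $2$ and an extension of $J\cap(J-h)$ to $J$. Both work; the paper's choice avoids the double integral, the factor $2$, and the minor boundary/periodicity bookkeeping you flag at the end. (Also note that invoking \cref{p:casetwo} to replace $\inf_{c}$ by $f_J$ is unnecessary for the direction you need — $f_J$ is simply one admissible candidate in the infimum.) The ``in particular'' chain is closed the same way in both.
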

\begin{proof}
    We start with the proof of the first inequality.
    Let $J$ be an interval, and $P\in\Pc_k$.
    Set $\delta=\leb{J}/(k+2)$ and let $J'$ be the interval having the same left endpoint of $J$ and length $\delta$.
    We notice that $\Delta_h^{k+1}P=0$, therefore, if $0\leq h\leq\delta$,
    \[
        \|\Delta^{k+1}_h f\|_{\Lnorm^p(J')}
        = \|\Delta^{k+1}_h (f - P)\|_{\Lnorm^p(J')}
        \lesssim_k \|f-P\|_{\Lnorm^p(J)}.
    \]
    Take the infimum over $P\in\Pc_k$ of the right-hand side, and the supremum over $h\in(0,\delta)$ on the left-hand side, so that
    \[
        \sup_{0\leq h\leq\delta}\|\Delta_h^{k+1}f\|_{\Lnorm^p(J')}
        \lesssim_k\inf_{P\in\Pc_k} \|f-P\|_{\Lnorm^p(J)}.
    \]
    Since $\leb{J'}=\leb{J}/(k+2)$, by bounding from below the left-hand side with its infimum over intervals, and then taking the infimum in $J$ of the left-hand side, the inequality follows.

    We turn to the proof of the second inequality.
    If $y\in(0,1)$ and $\delta\in(0,1]$, then
    \begin{equation}\label{eq:def_delta1}
        \Lambda(\alpha,0,p,f)^p
        \leq\delta^{-\alpha p}
        \avint_{(0,\delta)}|f(y+h)-f(y)|^p\,dh.
    \end{equation}
    Consider an interval $J$ with $\leb{J}=\delta$ and integrate the above inequality over $y\in J$ to get
    \[
        \Lambda(\alpha,0,p,f)
        \leq \leb{J}^{-\alpha}\bigl\|h\mapsto\|\Delta_h^1f\|_{\Lnorm^p(J)}\bigr\|_{\Lnorm^p(0,\leb{J})}.
    \]
    The conclusion follows by taking the infimum over $J$.
\end{proof}

\begin{corollary}\label{c:def_delta}
    If $\alpha>0$, $1\leq p<\infty$, and $f\in L^p(0,1)$, then
    \[
        \Lambda(\alpha,0,p,f)
        \lesssim_{\alpha,k} \inf_{\delta\in(0,1)}\delta^{-\alpha}
        \bigl\|h\mapsto\|\Delta_h^1f\|_{L^p(0,1)}\bigr\|_{\Lnorm^p(0,\delta)}.
    \]
\end{corollary}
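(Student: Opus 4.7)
The plan is to adapt the second half of the proof of \cref{p:def_delta} by integrating the pointwise estimate over the whole interval $(0,1)$ rather than over a sub-interval of length $\delta$. The corollary is the ``global'' counterpart of \cref{eq:delta_lower}, in which the inner norm $\|\cdot\|_{\Lnorm^p(J)}$ is replaced by the full (non-normalized) norm $\|\cdot\|_{L^p(0,1)}$ at the price of an inessential constant.

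First, I recall the pointwise bound used in the proof of \cref{eq:delta_lower}: for every $\delta\in(0,1)$ and every $y\in(0,1-\delta)$,
\[
    \Lambda(\alpha,0,p,f)^p
    \leq \delta^{-\alpha p}\avint_0^\delta |f(y+h) - f(y)|^p\,dh,
\]
which is just \cref{d:lambda1} applied to the interval $(y,y+\delta)$ with the constant polynomial $P\equiv f(y)$.

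Second, I integrate in $y$ over $(0,1-\delta)$ and swap the order of integration via Fubini:
\[
    (1-\delta)\,\Lambda(\alpha,0,p,f)^p
    \leq \delta^{-\alpha p}\int_0^{1-\delta}\avint_0^\delta |f(y+h)-f(y)|^p\,dh\,dy
    \leq \delta^{-\alpha p}\avint_0^\delta\|\Delta_h^1 f\|_{L^p(0,1)}^p\,dh.
\]
For $\delta\in(0,\tfrac12]$, dividing by $1-\delta$ and taking $p$-th roots gives
\[
    \Lambda(\alpha,0,p,f)
    \lesssim_\alpha \delta^{-\alpha}\,\bigl\|h\mapsto\|\Delta_h^1 f\|_{L^p(0,1)}\bigr\|_{\Lnorm^p(0,\delta)}.
\]
For $\delta\in(\tfrac12,1)$, since $\alpha>0$ and $\|g\|_{\Lnorm^p(0,1/2)}\leq 2^{1/p}\|g\|_{\Lnorm^p(0,\delta)}$, one reduces immediately to the case $\delta=\tfrac12$ at the cost of another $\alpha$-dependent constant. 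Taking the infimum over $\delta\in(0,1)$ yields the claim.

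The main (quite minor) obstacle is the interpretation of $\|\Delta_h^1 f\|_{L^p(0,1)}$ when $y+h>1$: whether one extends $f$ by zero, periodically, or simply integrates over $\{y:y,y+h\in(0,1)\}$, each convention yields the stated inequality up to a universal constant, so this does not affect the structure of the proof.
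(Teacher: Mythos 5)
Your proof is correct and follows essentially the same route as the paper, which simply says to integrate \cref{eq:def_delta1} over all of $(0,1)$; you have merely spelled out the Fubini swap and, more carefully than the paper, dealt with the minor bookkeeping near the right endpoint ($y>1-\delta$) and the regime $\delta$ close to $1$.
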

\begin{proof}
    The inequality follows simply by integrating \eqref{eq:def_delta1} over all $(0,1)$.
\end{proof}

\begin{corollary}\label{l:deltainv}
    If $a>0$, $1\leq p<\infty$, then
    \[
        \Bigl(\avint_y^{y+\delta}|\Delta_\delta^{k+1}f(x)|^{-a}\,dx\Bigr)^{-\frac1a}
        \lesssim_{k,p}\inf_{P\in\Pc_k}\|f-P\|_{\Lnorm^p(y,y+(k+2)\delta)}.
    \]
\end{corollary}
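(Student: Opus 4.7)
The plan is to combine two ingredients: a Jensen-type inequality that passes from the negative-exponent mean on the left to an ordinary $\Lnorm^p$-mean of the difference, and the fact that the $(k+1)$-th iterated difference $\Delta_\delta^{k+1}$ kills every polynomial of degree at most $k$.

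First, I would reduce to controlling $\|\Delta_\delta^{k+1}f\|_{\Lnorm^p(y,y+\delta)}$. Writing $g\coloneqq|\Delta_\delta^{k+1}f|$ and applying Jensen's inequality to the convex function $t\mapsto t^{-p/a}$ on $(0,\infty)$ (which is convex since the exponent is negative), I obtain
\[
    \Bigl(\avint_y^{y+\delta} g(x)^{-a}\,dx\Bigr)^{-p/a}
    \leq
    \avint_y^{y+\delta} g(x)^{p}\,dx,
\]
whence, taking $p$-th roots,
\[
    \Bigl(\avint_y^{y+\delta}|\Delta_\delta^{k+1}f(x)|^{-a}\,dx\Bigr)^{-\frac1a}
    \leq
    \|\Delta_\delta^{k+1}f\|_{\Lnorm^p(y,y+\delta)}.
\]

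Next, I would fix an arbitrary $P\in\Pc_k$ and use $\Delta_\delta^{k+1}P\equiv 0$ to rewrite the integrand as $\Delta_\delta^{k+1}(f-P)(x)$. Expanding the difference via the binomial formula $\Delta_\delta^{k+1}h(x)=\sum_{j=0}^{k+1}(-1)^{k+1-j}\binom{k+1}{j}h(x+j\delta)$, applying Minkowski's inequality in $\Lnorm^p(y,y+\delta)$, and performing the change of variable $x\mapsto x+j\delta$ in each of the $k+2$ summands, every term becomes, up to a combinatorial constant depending only on $k$, the $\Lnorm^p$-norm of $f-P$ over a subinterval of length $\delta$ inside $[y,y+(k+2)\delta]$. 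Summing the $k+2$ contributions and renormalizing the average from total mass $\delta$ to $(k+2)\delta$ costs only a factor of $k+2$, yielding
\[
    \|\Delta_\delta^{k+1}f\|_{\Lnorm^p(y,y+\delta)}
    \lesssim_{k,p}
    \|f-P\|_{\Lnorm^p(y,y+(k+2)\delta)}.
\]
Chaining this with the Jensen step and taking the infimum over $P\in\Pc_k$ concludes the argument.

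I do not expect a substantive obstacle, since both ingredients are classical. The only mild point of care is the bookkeeping of the normalization constants as the domain grows from length $\delta$ to $(k+2)\delta$, and the verification that the Jensen step is genuinely valid for the negative exponent $-a$ (which reduces to the convexity of $t\mapsto t^{-p/a}$, guaranteed by $-p/a<0$). In particular, the implicit constant must come out depending only on $k$ and $p$, and not on $a$, which is absent from the right-hand side.
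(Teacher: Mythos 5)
Your proof is correct and follows essentially the same route as the paper's: Jensen to pass from the negative-exponent average to the normalized $L^p$ norm of $\Delta_\delta^{k+1}f$, then the annihilation of degree-$k$ polynomials by $\Delta_\delta^{k+1}$ combined with Minkowski and a change of variables to control this by $\|f-P\|_{\Lnorm^p(y,y+(k+2)\delta)}$. The only cosmetic difference is that you apply Jensen with $t\mapsto t^{-p/a}$ to $|\Delta_\delta^{k+1}f|^{-a}$ while the paper applies $x\mapsto x^{-a/p}$ to $|\Delta_\delta^{k+1}f|^{p}$ (the two are equivalent after taking roots), and you spell out the second step directly rather than citing it from the proof of \cref{p:def_delta}.
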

\begin{proof}
    The inequality is elementary and follows as in \cite[Lemma 4.11]{GalGub2023}.
    By the Jensen inequality, since $x\mapsto x^{-a/p}$ is convex for $x>0$,
    \[
        \Bigl(\avint_y^{y+\delta}|\Delta_\delta^{k+1}f(x)|^p\,dx\Bigr)^{-\frac{a}{p}}
        \leq \avint_y^{y+\delta}|\Delta_\delta^{k+1}f(x)|^{-a}\,dx.
    \]
    The conclusion follows using \cref{p:def_delta}.
\end{proof}

%%%%%%%%%%%%%%%%%%%%
%%%%%%%%%%%%%%%%%%%%
%%%%%%%%%%%%%%%%%%%%

\subsubsection{Basic properties}
Here we study some basic properties.
We first give some monotonicity properties of $\Lambda$ with respect to parameters.
In terms of irregularity, we show that if $\Lambda(\cdot,\cdot,\cdot,f)>0$ (for suitable values of the parameters), then $f$ is H\"older rough, and that if $f$ is $(\rho,\gamma)$-irregular, then $\Lambda(\cdot,\cdot,\cdot,f)>0$ (for suitable values of the parameters).
We finally state a stability result.

\begin{lemma}[Basic properties]\label{l:lambda_basic}
    Let $1\leq p<\infty$ and $f\in L^p(0,1)$.
    \begin{itemize}
        \item $\Lambda$ is monotone non-decreasing in $\alpha$ and $p$, and monotone non-increasing  in $k$, in the sense that
              \begin{itemize}
                  \item if $\alpha\leq\alpha'$, then $\Lambda(\alpha,k,p,f)\leq\Lambda(\alpha',k,p,f)$,
                  \item if $h\leq k$, then  $\Lambda(\alpha,k,p,f)\leq\Lambda(\alpha,h,p,f)$,
                  \item If $1\leq p\leq q<\infty$
                        and $f\in L^q(0,1)$, then $\Lambda(\alpha,k,p,f) \leq \Lambda(\alpha,k,q,f)$.
              \end{itemize}
        \item If $f_x\in L^1(0,1)$ and $k\geq1$,
              \[
                  \Lambda(\alpha,k,p,f)
                  \leq \Lambda(\alpha-1,k-1,1,f_x).
              \]
        \item If $\alpha\leq 0$,
              then $\Lambda(\alpha,k,p,f)=0$ for every $k\geq0$.
        \item If $\alpha>0$
              and $f^{(j)}\in L^1$, where $j=\lceil\alpha\rceil$, then $\Lambda(\alpha,k,p,f)=0$ for all $k\geq\alpha$.
    \end{itemize}
\end{lemma}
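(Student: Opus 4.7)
The plan is to dispatch the four bullets separately: the first three follow by unwrapping the definition, and only the last vanishing statement requires some care.

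\textbf{Monotonicity.} All three monotonicity claims are immediate from the definition of $\Lambda$. Since $\leb{J}\leq 1$, the factor $\leb{J}^{-\alpha}$ is non-decreasing in $\alpha$, which gives the first inequality; for $h\leq k$ the containment $\Pc_h\subset\Pc_k$ makes the inner infimum shrink as $k$ grows, giving the second; for $1\leq p\leq q$, Jensen's inequality applied to the normalized Lebesgue measure on $J$ yields $\|f-P\|_{\Lnorm^p(J)}\leq\|f-P\|_{\Lnorm^q(J)}$, and taking infima preserves the inequality.

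\textbf{Derivative comparison.} Given an interval $J$ and $P\in\Pc_{k-1}$, I would pick the antiderivative $\tilde P\in\Pc_k$ of $P$ whose constant of integration is chosen so that $f-\tilde P$ has zero mean on $J$. Since $f_x\in L^1$, $f$ admits an absolutely continuous representative with $(f-\tilde P)'=f_x-P$, so the one-dimensional Poincar\'e--Wirtinger bound yields
\[
    \|f-\tilde P\|_{\Lnorm^p(J)}
    \leq\|f-\tilde P\|_{L^\infty(J)}
    \leq\leb{J}\,\|f_x-P\|_{\Lnorm^1(J)}.
\]
Multiplying by $\leb{J}^{-\alpha}$, taking the infimum on the right over $P\in\Pc_{k-1}$ (which upper-bounds the infimum over $\Pc_k$ on the left), and then infimizing over $J$ gives $\Lambda(\alpha,k,p,f)\leq\Lambda(\alpha-1,k-1,1,f_x)$.

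\textbf{Vanishing claims.} For $\alpha\leq 0$, fix any $L^p$-Lebesgue point $y$ of $f$ and a shrinking family $J_\delta\ni y$ with $\leb{J_\delta}=\delta$; the constant $P\equiv f(y)\in\Pc_0\subset\Pc_k$ makes $\|f-f(y)\|_{\Lnorm^p(J_\delta)}\to 0$ by Lebesgue differentiation, and since $\leb{J_\delta}^{-\alpha}\leq 1$, the product vanishes. For $\alpha>0$ with $j=\lceil\alpha\rceil$ and $f^{(j)}\in L^1$, the integrality of $k$ together with $k\geq\alpha$ forces $k\geq j$, so $T_j^y f\in\Pc_j\subset\Pc_k$. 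At any Lebesgue point $y$ of $f^{(j)}$, the Taylor-with-integral-remainder identity
\[
    f(x)-T_j^y f(x)
    =\int_y^x\tfrac{(x-t)^{j-1}}{(j-1)!}\bigl(f^{(j)}(t)-f^{(j)}(y)\bigr)\,dt
\]
gives $\|f-T_j^y f\|_{\Lnorm^p(J_\delta)}\leq\tfrac{\delta^{j-1}}{(j-1)!}\int_{J_\delta}|f^{(j)}(t)-f^{(j)}(y)|\,dt=\delta^j o(1)$ as $\delta\to 0$; hence $\leb{J_\delta}^{-\alpha}\|f-T_j^y f\|_{\Lnorm^p(J_\delta)}=\delta^{j-\alpha}o(1)\to 0$ since $j-\alpha\geq 0$.

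\textbf{Main obstacle.} The only genuinely delicate step is the integer borderline $\alpha=j$ in the last bullet: here $\delta^{j-\alpha}=1$, so one must truly exploit $L^1$-Lebesgue differentiation of $f^{(j)}$ (not merely a pointwise bound) to extract the $o(1)$ factor that kills the term; the remaining cases are routine.
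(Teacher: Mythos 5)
Your proof is correct, and for the monotonicity claims, the $\alpha\leq0$ vanishing, and the derivative-comparison inequality it matches the paper's argument almost verbatim. The only cosmetic difference in the second bullet is the anchoring: the paper fixes the antiderivative $Q$ of $P$ so that $Q(y)=f(y)$ at the left endpoint of $J$, whereas you normalize so that $f-\tilde P$ has zero mean and invoke Poincar\'e--Wirtinger; both give the identical estimate $|f(x)-Q(x)|\leq\leb{J}\,\|f_x-P\|_{\Lnorm^1(J)}$. The genuine divergence is in the last bullet. The paper dispatches it in one line by iterating the derivative-comparison inequality $j$ times to get $\Lambda(\alpha,k,p,f)\leq\Lambda(\alpha-j,k-j,1,f^{(j)})$ and then noting that $\alpha-j\leq 0$ (and $k-j\geq0$, since $k\geq\alpha$ and $k\in\N$ force $k\geq\lceil\alpha\rceil=j$) so the third bullet applies. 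You instead give a self-contained Taylor-with-integral-remainder argument anchored at an $L^1$-Lebesgue point of $f^{(j)}$. Your route is a bit more work but makes the role of Lebesgue differentiation and the integer borderline $\alpha=j$ explicit, whereas the paper's route is cleaner because it reuses the already-established inequalities; both are valid, and indeed your Taylor-remainder identity is exactly what one obtains by telescoping the one-step estimate $j$ times. A minor point worth noting in the paper's route (implicit there as in yours) is that $f^{(j)}\in L^1(0,1)$ forces $f,f',\dots,f^{(j-1)}$ to be absolutely continuous, hence in $L^1$, which is what licenses applying the second bullet repeatedly.
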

\begin{proof}
    The monotonicity properties are elementary.
    For the second property, let $P\in\Pc_{k-1}$ and let $J$ be an interval with leftmost point $y$.
    Let $Q\in\Pc_k$ be the primitive of $P$ such that $Q(y)=f(y)$, then
    for every $x\in J$,
    \[
        |f(x)-Q(x)|
        \leq \int_y^x|f_x(z) - P(z)|\,dz
        \leq \leb{J}\|f_x-P\|_{\Lnorm^1(J)},
    \]
    therefore
    \[
        \Lambda(\alpha,k,p,f)
        \leq \leb{J}^{-\alpha}\|f-Q\|_{\Lnorm^p(J)}
        \leq \leb{J}^{-(\alpha-1)}\|f_x-P\|_{\Lnorm^1(J)},
    \]
    The conclusion follows by taking the infimum over $P$ and $J$.

    For the third statement, assume that $\alpha\leq 0$. If $x\in(0,1)$ is a \emph{Lebesgue point} of $f$, we have that
    \[
        \Lambda(0,0,p,f)
        \leq \inf_{\delta>0}\Bigl(\avint_{(x-\delta,x+\delta)}|f(y)-f(x)|^p\,dy\Bigr)^{\frac1p}
        = 0.
    \]
    It then follows by the monotonicity properties of $\Lambda$ that $Lambda(\alpha,k,p,f)=0$ for all $\alpha\leq 0$ and $k\geq0$.

    Finally, the last statement is an elementary consequence of the second and the third.
\end{proof}

%%%%%%%%%%%%%%%%%%%%
%%%%%%%%%%%%%%%%%%%%
%%%%%%%%%%%%%%%%%%%%

\subsubsection{Irregularity properties}

Here we prove that if a function is ``too'' regular, then its irregularity index is zero.
We then slightly extend some results of \cite{GalGub2023} about the connection of the irregularity index with other notions of irregularity.

\begin{lemma}\label{l:lambda_irregular}
    Let $\alpha>0$. If $1\leq p<\infty$ and $f$ is limit in $B^\alpha_{p,\infty}$ of smooth functions, then $\Lambda(\alpha,k,p,f)=0$ for all $k\geq[\alpha]$. In particular the conclusion holds if
    \begin{itemize}
        \item $1\leq p < \infty$ and $f\in B^{\alpha}_{p,q}$ with $q<\infty$,
        \item $1\leq p\leq\infty$ and $f\in B^{\alpha'}_{p,\infty}$ with $\alpha'>\alpha$.
    \end{itemize}
    Here $[\alpha]$ is the largest integer smaller than $\alpha$.
\end{lemma}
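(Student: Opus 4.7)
The plan is to prove the conclusion first for smooth $f$ via Taylor's theorem, and then extend to the general case by a density argument in which the perturbation is controlled via Whitney's inequality together with a Fubini/Chebyshev averaging over interval positions. The key technical point will be a local-versus-global improvement of the $(k+1)$-th modulus of smoothness, which offsets the $\delta^{-1/p}$ factor produced by the normalized $L^p$ norm in the definition of $\Lambda$.

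\textbf{Step 1 (smooth case).} First I treat $g\in C^\infty([0,1])$. Fix an interval $J$ of length $\delta\in(0,1]$ and let $P\in\Pc_k$ be the degree-$k$ Taylor polynomial of $g$ at the midpoint of $J$. Taylor's remainder gives $\|g-P\|_{L^\infty(J)}\lesssim_k \|g^{(k+1)}\|_{L^\infty}\,\delta^{k+1}$, and since $\|\cdot\|_{\Lnorm^p(J)}\leq\|\cdot\|_{L^\infty(J)}$,
\[
    \delta^{-\alpha}\inf_{P\in\Pc_k}\|g-P\|_{\Lnorm^p(J)}
    \;\lesssim_k\;
    \|g^{(k+1)}\|_{L^\infty}\,\delta^{k+1-\alpha}.
\]
Since $k\geq[\alpha]$ forces $k+1>\alpha$, the right-hand side vanishes as $\delta\to 0$, so $\Lambda(\alpha,k,p,g)=0$.

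\textbf{Step 2 (density extension).} In the general case, pick $(g_n)\subset C^\infty$ with $\|f-g_n\|_{B^\alpha_{p,\infty}}\to 0$ and set $h_n=f-g_n$. For any $J$ of length $\delta$, I would apply Whitney's inequality (in the $L^p$-averaged form) to obtain $\inf_{P\in\Pc_k}\|h_n-P\|_{L^p(J)}^p\lesssim_{k,p}\delta^{-1}\int_0^\delta\|\Delta_s^{k+1}h_n\|_{L^p(J_s)}^p\,ds$. A Fubini calculation over translates $J_y=[y,y+\delta]$ then yields
\[
    \int_0^{1-\delta}\int_0^\delta \|\Delta_s^{k+1}h_n\|_{L^p((J_y)_s)}^p\,ds\,dy
    \;\lesssim\;
    \delta^2\,\omega_{k+1}(h_n,\delta)_{L^p(0,1)}^p,
\]
so by Chebyshev there exists $y$ (depending on $n$ and $\delta$) for which the local modulus is bounded by $\delta^{1/p}$ times the global one. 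Combined with the Besov estimate $\omega_{k+1}(h_n,\delta)_{L^p(0,1)}\lesssim \delta^\alpha\|h_n\|_{B^\alpha_{p,\infty}}$ (valid because $k+1>\alpha$) and after normalizing $L^p$ to $\Lnorm^p$, this gives
\[
    \delta^{-\alpha}\inf_{P\in\Pc_k}\|h_n-P\|_{\Lnorm^p(J_y)}
    \;\lesssim\;
    \|f-g_n\|_{B^\alpha_{p,\infty}}.
\]
On the same $J_y$, the Taylor estimate from Step 1 for $g_n$ is unaffected (it depends only on $\delta$ and $\|g_n^{(k+1)}\|_{L^\infty}$). The triangle inequality $\inf_P\|f-P\|_{\Lnorm^p(J_y)}\leq\inf_P\|g_n-P\|_{\Lnorm^p(J_y)}+\inf_P\|h_n-P\|_{\Lnorm^p(J_y)}$ then decouples the two contributions: given $\epsilon>0$, I first choose $n$ large enough to make the perturbation bound $<\epsilon/2$, and then choose $\delta$ small enough (depending on the now-fixed $\|g_n^{(k+1)}\|_{L^\infty}$, using $k+1>\alpha$) to make the Taylor bound $<\epsilon/2$. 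This forces $\Lambda(\alpha,k,p,f)<\epsilon$, hence $\Lambda(\alpha,k,p,f)=0$. The two listed subcases are immediate: $C^\infty$ is dense in $B^\alpha_{p,q}$ when $q<\infty$, and any $f\in B^{\alpha'}_{p,\infty}$ with $\alpha'>\alpha$ satisfies $\omega_{k+1}(f,t)_p\lesssim t^{\alpha'}=o(t^\alpha)$, so $f$ belongs to the closure $\overline{C^\infty}^{B^\alpha_{p,\infty}}$.

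\textbf{Main obstacle.} The hard part is the Fubini/Chebyshev averaging in Step 2. The normalization $\avint$ in the definition of $\Lambda$ contributes a $\delta^{-1/p}$ factor, which on a generic interval $J$ of length $\delta$ would make the perturbation term $\delta^{-\alpha-1/p}\|h_n\|_{L^p}$ blow up as $\delta\to 0$ and thus prevent the decoupling between the choice of $n$ and the choice of $\delta$. The averaging produces a favorable interval $J_y$ where the local $(k+1)$-th modulus of $h_n$ is smaller than the global one by exactly a factor $\delta^{1/p}$, precisely canceling the normalization and leaving a clean bound in terms of $\|f-g_n\|_{B^\alpha_{p,\infty}}$. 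Verifying that a single $y$ works for the sup-over-$s$ in $\omega_{k+1}$ (by handing dyadic scales $s=\delta 2^{-j}$ via a summable Chebyshev union bound, or equivalently using the integrated modulus in Whitney) is the only delicate point.
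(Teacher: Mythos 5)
Your proof is correct and takes a genuinely different route from the paper's. The paper first reduces to $\alpha\in(0,1)$ and $k=0$ (differentiating $f$ exactly $[\alpha]$ times and invoking the monotonicity properties of $\Lambda$ from \cref{l:lambda_basic}), and then applies \cref{c:def_delta}, which already packages the averaging-over-position trick in the inequality $\Lambda(\alpha,0,p,f)\lesssim\inf_\delta\delta^{-\alpha}\bigl\|h\mapsto\|\Delta^1_hf\|_{L^p(0,1)}\bigr\|_{\Lnorm^p(0,\delta)}$; from there the split $f=\phi+(f-\phi)$ with $\omega_1(\phi,\delta)_p\lesssim\delta\|\phi\|_{W^{1,p}}$ and $\omega_1(f-\phi,\delta)_p\lesssim\delta^\alpha\epsilon$ finishes. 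Your argument instead stays at the given order $k$, replacing the reduction step by Whitney's inequality in integrated form and re-deriving the averaging over interval positions via Fubini/Chebyshev to offset the normalization factor $\delta^{-1/p}$. This is exactly what \cref{eq:delta_lower} and \cref{c:def_delta} accomplish, but those are stated and proved in the paper only for $k=0$ (which is precisely why the paper reduces to that case); your Whitney argument produces the general-$k$ analogue directly. Both routes hinge on the same underlying observation — $\Lambda$ is an infimum over interval positions, so one may select a favorable interval by averaging — but you invoke it with heavier machinery (Whitney) in exchange for skipping the differentiation step. Your Step 1 (Taylor at the midpoint) is a cleaner version of the paper's bound $\bigl\|h\mapsto\|\Delta^1_h\phi\|_{L^p}\bigr\|_{\Lnorm^p(0,\delta)}\leq\delta\|\phi\|_{W^{1,p}}$.

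One small gap: the second bullet of the lemma permits $p=\infty$, and your closing sentence handles it by asserting that $f$ lies in the $B^\alpha_{p,\infty}$-closure of smooth functions, then appealing to the main statement. But your Steps 1–2 are formulated only for $p<\infty$ (the Fubini computation integrates $|\Delta^{k+1}_s h_n|^p$ over $x$, which has no analogue at $p=\infty$), and the main statement itself is restricted to $p<\infty$. The paper handles $p=\infty$ by a separate one-line estimate, $\Lambda(\alpha,0,\infty,f)\leq\delta^{-\alpha}\sup_{x\in B_\delta(y)}|f(x)-f(y)|\leq\delta^{\alpha'-\alpha}\|f\|_{B^{\alpha'}_{\infty,\infty}}$, which you would need to add; it is elementary but not covered by what you wrote.
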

\begin{proof}
    Let $n=[\alpha]$, then by monotonicity, if $k\geq n$,
    \[
        \Lambda(\alpha,k,p,f)
        \leq \Lambda(\alpha-n,k-n,1,g)
        \leq \Lambda(\alpha-n,0,p,g),
    \]
    where $g$ is the $n^\text{th}$ derivative of $f$.
    Moreover, if $f\in B^\alpha_{p,q}$, then $g\in B^{\alpha-n}_{p,q}$ for any value of the indices.
    Therefore it is not restrictive to assume that $\alpha\in[0,1)$ and $k=0$.
    Moreover, the case $\alpha=0$ has been already examined in \cref{l:lambda_basic}, so we assume $\alpha\in(0,1)$.

    Let $f$ be limit in $B^\alpha_{p,\infty}$ of smooth functions.
    For $\epsilon>0$ let $\phi$ be a smooth function such that $\|f-\phi\|_{B^\alpha_{p,\infty}}\leq\epsilon$. By \cref{c:def_delta}, for every $\delta\in(0,1)$,
    \[
        \begin{aligned}
            \delta^\alpha\Lambda(\alpha,0,p,f)
             & \leq \bigl\|h\mapsto\|\Delta^1_h f\|_{L^p(0,1)}\bigr\|_{\Lnorm^p(0,\delta)}        \\
             & \leq \bigl\|h\mapsto\|\Delta^1_h (f-\phi)\|_{L^p(0,1)}\bigr\|_{\Lnorm^p(0,\delta)}
            + \bigl\|h\mapsto\|\Delta^1_h \phi\|_{L^p(0,1)}\bigr\|_{\Lnorm^p(0,\delta)}.
        \end{aligned}
    \]
    The first term can be bounded by the $B^\alpha_{p,\infty}$ norm,
    \[
        \bigl\|h\mapsto\|\Delta^1_h (f-\phi)\|_{L^p(0,1)}\bigr\|_{\Lnorm^p(0,\delta)}
        \leq\delta^\alpha\|f-\phi\|_{B^\alpha_{p,\infty}}
        \leq\delta^\alpha\epsilon,
    \]
    while the second can be bounded using the fact that $\phi$ is smooth,
    \[
        \bigl\|h\mapsto\|\Delta^1_h \phi\|_{L^p(0,1)}\bigr\|_{\Lnorm^p(0,\delta)}
        \leq \delta\|\phi\|_{W^{1,p}}.
    \]
    In conclusion
    \[
        \Lambda(\alpha,0,p,f)
        \lesssim \epsilon + \delta^{1-\alpha}\|\phi\|_{W^{1,p}}.
    \]
    By first taking the infimum in $\delta$ and then in $\epsilon$, the conclusion follows.

    We recall that if $p,q<\infty$, then smooth functions are dense in $B^\alpha_{p,q}$, and since $B^\alpha_{p,q}\subset B^\alpha_{p,\infty}$, all elements of $B^\alpha_{p,q}$ are limits of smooth functions in $B^\alpha_{p,\infty}$.
    This proves the second statement.

    Furthermore, if $\alpha'>\alpha$, then $B^{\alpha'}_{p,\infty}\subset B^\alpha_{p,q}$ for all $q<\infty$, so the third statement follows from the second, but for the case $p=\infty$, which can be proved easily by the following consideration: if $f\in B^{\alpha'}_{\infty,\infty}$, for every $y$ and every $\delta\in(0,1)$,
    \[
        \Lambda(\alpha,0,\infty,f)
        \leq \delta^{-\alpha}\sup_{x:|x-y|\leq\delta}|f(x)-f(y)|
        \leq\delta^{\alpha'-\alpha}\|f\|_{B^{\alpha'}_{\infty,\infty}},
    \]
    and the conclusion follows by taking the infimum in $\delta$.
\end{proof}

Here we extend to the full range of parameters some of the results of \cite{GalGub2023}.
Recall that \cite{FriHai2014}
\[
    L_\alpha(f)
    = \inf_{y,\delta}\sup_{x\in B_\delta(y)}\delta^{-\alpha}|f(x)-f(y)|.
\]
A measurable function $f$ is \emph{$\alpha$-H\"older rough} if $L_\alpha(f)>0$.

\begin{lemma}[Connection with H\"older roughness]\label{l:lambda_rough}
    Let $\alpha\in(0,1)$, then $\Lambda(\alpha,0,p,f)\leq 2^{\frac1p} L_\alpha(f)$.
    In particular, if $\Lambda(\alpha,k,p,f)>0$ for some $k\geq0$, then $f$ is $\alpha$-H\"older rough.
\end{lemma}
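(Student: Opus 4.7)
My plan is to prove the inequality $\Lambda(\alpha,0,p,f)\leq 2^{1/p}L_\alpha(f)$ directly from the definitions by a canonical choice of test interval and test polynomial, and then obtain the Hölder roughness corollary from the monotonicity already established in \cref{l:lambda_basic}.

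First, I would fix $y\in(0,1)$ and $\delta>0$ such that $B_\delta(y)=(y-\delta,y+\delta)$ is contained in $(0,1)$, and take the test interval $J=B_\delta(y)$ together with the constant polynomial $P\equiv f(y)\in\Pc_0$. Plugging $J$ and $P$ into the definition of $\Lambda(\alpha,0,p,f)$ gives
\[
    \Lambda(\alpha,0,p,f)
    \leq \leb{J}^{-\alpha}\Bigl(\avint_J|f(x)-f(y)|^p\,dx\Bigr)^{\frac1p}
    \leq (2\delta)^{-\alpha}\sup_{x\in B_\delta(y)}|f(x)-f(y)|,
\]
where the second step bounds the $\Lnorm^p$ norm by the supremum. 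Multiplying through, this reads
\[
    \Lambda(\alpha,0,p,f)
    \leq 2^{-\alpha}\,\delta^{-\alpha}\sup_{x\in B_\delta(y)}|f(x)-f(y)|,
\]
and since $2^{-\alpha}\leq 1\leq 2^{1/p}$ for $\alpha\in(0,1)$, taking the infimum over admissible $(y,\delta)$ on the right-hand side yields $\Lambda(\alpha,0,p,f)\leq 2^{1/p}L_\alpha(f)$. A small technical point is handling those $(y,\delta)$ for which $B_\delta(y)$ is not contained in $(0,1)$; I would either work on the torus or restrict the infimum defining $L_\alpha$ to balls fitting inside $(0,1)$, which is harmless because the infimum defining $L_\alpha$ is attained over arbitrarily small $\delta$ and the estimate is local in $y$.

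For the ``in particular'' part, I would combine the inequality just obtained with the monotonicity in $k$ from \cref{l:lambda_basic}: if $k\geq0$, then $\Lambda(\alpha,k,p,f)\leq\Lambda(\alpha,0,p,f)$, so $\Lambda(\alpha,k,p,f)>0$ forces $\Lambda(\alpha,0,p,f)>0$, hence $L_\alpha(f)\geq 2^{-1/p}\Lambda(\alpha,0,p,f)>0$, i.e.\ $f$ is $\alpha$-Hölder rough.

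I do not expect any real obstacle: the argument is just a trivial comparison between an averaged $L^p$ norm and the uniform oscillation over the same ball, paired with a single explicit choice of minimizing polynomial (the constant $f(y)$). The only thing to be careful about is the boundary issue in choosing balls $B_\delta(y)\subset(0,1)$, which is easily dealt with by restricting attention to $(y,\delta)$ with $\delta\leq\min(y,1-y)$; this does not affect the value of $L_\alpha(f)$ since it is an infimum over arbitrarily small $\delta$.
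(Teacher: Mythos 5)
Your argument is correct and essentially the same as the paper's: both test $\Lambda(\alpha,0,p,f)$ with the constant polynomial $P\equiv f(y)$ and an interval of size $\sim\delta$ around $y$, then bound the $L^p$ average by the sup over $B_\delta(y)$; the paper uses the one-sided interval $(y,y+\delta)$ and compares to the ball with a factor $2^{1/p}$, whereas you use the symmetric interval $B_\delta(y)$ directly and in fact obtain the slightly sharper constant $2^{-\alpha}\leq 1\leq 2^{1/p}$. Your derivation of the ``in particular'' claim via the monotonicity in $k$ from \cref{l:lambda_basic} is exactly how the paper intends it.
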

\begin{proof}
    For $y$, $\delta>0$, we have that
    \[
        \begin{multlined}[.9\linewidth]
            (\delta^\alpha\Lambda(\alpha,0,p,f))^p
            \leq \avint_y^{y+\delta}|f(x)-f(y)|^p\,dx\leq\\
            \leq 2\avint_{y-\delta}^{y+\delta}|f(x)-f(y)|^p\,dx
            \leq 2\delta^{\alpha p}\Bigl(\sup_{x\in B_\delta(y)}\delta^{-\alpha}|f(x)-f(y)|\Bigr)^p,
        \end{multlined}
    \]
    and by taking the infimum over $y$, $\delta$, the conclusion follows.
\end{proof}

Recall that, given $\rho>0$ and $\gamma\in[0,1)$, $f$ is \emph{$(\rho,\gamma)$-irregular} \cite{CatGub2016} if there is $c>0$ such that for every interval $J$,
\begin{equation}\label{eq:rhogamma}
    |\widehat\mu_f^J(\xi)|
    \leq c\leb{J}^\gamma(1+|\xi|)^{-\rho},
\end{equation}
where $\mu_J^f$ is the \emph{occupation measure} of $f$ on $J$,
\[
    \mu_f^J(A)
    = \int_J \mathbf{1}_A(f(x))\,dx.
\]
In the following proposition we show that $(\rho,\gamma)$-irregular functions satisfy a suitable version of Wei's irregularity condition, thus providing a partial answer to a question raised in \cite{GalGub2023}.

\begin{proposition}\label{p:rhogamma}
    Assume that $f\in L^p(0,1)$ is $(\rho,\gamma)$-irregular, for some $p\in[1,\infty)$, $\rho>0$ and $\gamma\in[0,1)$.
    Then
    \[
        \Lambda(\alpha,0,p,f)>0
    \]
    for $\alpha>\frac{1-\gamma}\rho$.
\end{proposition}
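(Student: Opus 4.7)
The plan is to extract a quantitative lower bound on the $L^p$-oscillation of $f$ over any interval from the Fourier decay of the occupation measure. Fix an interval $J\subset[0,1]$ with $\leb{J}\leq 1$ and a real number $a\in\R$, and set $\epsilon := (\avint_J|f(x)-a|^p\,dx)^{1/p}$. The goal is to bound $\leb{J}^{-\alpha}\epsilon$ below by a positive constant, uniformly in $J$ and $a$.

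The starting identity is
\[
    e^{i\xi a}\,\widehat{\mu_f^J}(\xi)
    = \int_J e^{-i\xi(f(x)-a)}\,dx,
\]
from which $|1-e^{i\theta}|\leq|\theta|$ together with Jensen's inequality yield
\[
    \leb{J}
    \leq \bigl|\widehat{\mu_f^J}(\xi)\bigr| + |\xi|\int_J|f-a|\,dx
    \leq \bigl|\widehat{\mu_f^J}(\xi)\bigr| + |\xi|\,\leb{J}\,\epsilon.
\]
Inserting the $(\rho,\gamma)$-irregularity hypothesis \eqref{eq:rhogamma}, whose constant I shall call $c_0$, produces the master inequality
\[
    \leb{J}
    \leq c_0\,\leb{J}^\gamma(1+|\xi|)^{-\rho} + |\xi|\,\leb{J}\,\epsilon
    \qquad \forall\xi\in\R.
\]

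I would then handle two cases. If $\epsilon\geq\tfrac12$, then $\leb{J}^{-\alpha}\epsilon\geq\tfrac12$ is immediate, for $\leb{J}\leq 1$ and $\alpha\geq 0$. If $\epsilon<\tfrac12$, the optimal choice $|\xi|=1/(2\epsilon)$ absorbs half of the left-hand side into the last term of the master inequality, leaving
\[
    \tfrac12\leb{J}^{1-\gamma}
    \leq c_0\bigl(1+\tfrac1{2\epsilon}\bigr)^{-\rho}
    \leq c_0\,(2\epsilon)^\rho,
\]
hence $\epsilon\gtrsim\leb{J}^{(1-\gamma)/\rho}$ with a constant depending only on $c_0$, $\rho$ and $\gamma$. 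Combining the two cases,
\[
    \leb{J}^{-\alpha}\epsilon
    \gtrsim \leb{J}^{(1-\gamma)/\rho-\alpha},
\]
which, since $\leb{J}\leq 1$, stays bounded below by a positive constant as soon as $\alpha\geq(1-\gamma)/\rho$. Taking the infimum over $a\in\R$ and over $J$ then gives $\Lambda(\alpha,0,p,f)>0$, establishing the proposition (in fact with the slightly sharper threshold $\alpha\geq(1-\gamma)/\rho$).

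The argument is essentially a one-step Fourier-analytic estimate, so there is no deep obstacle. The only points that require a moment of thought are the choice $|\xi|=1/(2\epsilon)$, which balances the two terms in the master inequality, and the split into $\epsilon$ small versus large, which is simple bookkeeping to avoid dividing by a vanishing $\epsilon$.
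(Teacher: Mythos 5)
Your proof is correct, and it takes a genuinely different — and noticeably simpler — route than the paper. The paper bounds the Lebesgue measure of the "small oscillation" set $A_a=\{x\in J:|f(x)-f_J|\leq a\}$ via a Chebyshev--Plancherel argument: it introduces a smooth even bump $\psi$, rewrites $\leb{A_a}\leq\int\psi(u/a)\,\nu_J(du)$ in Fourier, applies H\"older with $q\rho>1$ to obtain $\leb{A_a}\lesssim a^{1/q}c_f\leb{J}^\gamma$, and then tunes $a$ so that $A_a$ occupies a small fraction of $J$; this yields $\Lambda(q(1-\gamma),0,p,f)\gtrsim c_f^{-q}$, and one passes to the limit $q\downarrow1/\rho$ (with an auxiliary reduction to $\rho\leq1$ via the scaling $(\rho_\theta,\gamma_\theta)$). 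Your argument replaces all of that by a single Fourier evaluation: writing $\leb{J}\leq|\widehat{\mu_f^J}(\xi)|+|\xi|\leb{J}\epsilon$ and choosing $|\xi|=1/(2\epsilon)$ to balance the two terms gives $\epsilon\gtrsim\leb{J}^{(1-\gamma)/\rho}$ in one stroke, with constant $\tfrac12(2c_0)^{-1/\rho}$. This avoids the bump function and Plancherel, avoids the H\"older exponent constraint $q\rho>1$ (hence avoids the strict inequality and the $\rho\leq1$ reduction), and in fact establishes the slightly sharper endpoint $\alpha\geq(1-\gamma)/\rho$. The one thing the paper's longer route buys that yours does not is the quantitative intermediate statement \eqref{eq:smallsmall} about the measure of small-oscillation sets, which the authors record as being of independent interest. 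Your argument, as written, is complete; the implicit case $\epsilon=0$ (where $|\xi|=1/(2\epsilon)$ is undefined) is already excluded by letting $|\xi|\to\infty$ in the master inequality, which would force $\leb{J}=0$, but it might be worth one sentence to say so.
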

\begin{proof}
    Denote by $c_f$ the (best) constant appearing in \eqref{eq:rhogamma}, and assume first that $\rho\leq 1$.
    Fix an interval $J$, a number $a>0$ and denote by $A_a$ the following set,
    \[
        A_a=\{x\in J:|f(x)-f_J|\leq a\},
    \]
    where we recall that $f_J$ is the average of $f$ on $J$.
    We use an argument similar to \cite[Theorem 69]{GalGub2020}.
    Let $\psi:\R\to[0,1]$ be a bounded smooth even function, non-increasing on $[0,\infty)$ and $\psi\geq1$ on $[-1,1]$.
    Then, by Chebychev's inequality and Plancherel formula,
    \[
        \begin{multlined}[.9\linewidth]
            \leb{A_a}
            =\leb{\{x\in J:\psi((f(x)-f_J)/a)\geq1\}}
            \leq \int_J \psi\Bigl(\frac{f(x)-f_J}{a}\Bigr)\,dx =\\
            = \int_J \psi(u/a)\,\nu_J(du)
            = \int_J \overline{\psi(\cdot/a)}^{\wedge}(\xi)\widehat\nu_J(\xi)\,d\xi
            = a\int_J\widehat{\overline{\psi}}(a\xi)\widehat\nu_J(\xi)\,d\xi,
        \end{multlined}
    \]
    where $\nu_J$ is the occupation measure of $f-f_J$ on $J$.
    Notice that $\nu_J$ satisfies \eqref{eq:rhogamma} (with the same constant) if and only if $\mu_J^f$ does, therefore
    \[
        \begin{multlined}[.95\linewidth]
            \leb{A_a}
            \leq a c_f\leb{J}^\gamma\int_J (1+|\xi|)^{-\rho}|\widehat\psi(a\xi)|\,d\xi\leq\\
            \leq a c_f\leb{J}^\gamma
            \Bigl(\int_J (1+|\xi|)^{-q\rho}\,d\xi\Bigr)^{\frac1{q}}
            \Bigl(\int_J |\widehat\psi(a\xi)|^{q'}\,d\xi\Bigr)^{\frac1{q'}}
            \lesssim a^{\frac1{q}} c_f\leb{J}^\gamma,
        \end{multlined}
    \]
    where we have used \eqref{eq:rhogamma} and the H\"older inequality with exponents $q$, $q'$ such that $q\rho>1$.
    Choose now
    \[
        a
        =\epsilon c_f^{-q}\leb{J}^{q(1-\gamma)},
    \]
    with $\epsilon>0$ small enough that
    \[
        \frac1{\leb{J}}\leb{A_a}
        \lesssim a^{\frac1{q}} c_f\leb{J}^{\gamma-1}
        \lesssim \epsilon^{\frac1q}
        \ll 1.
    \]
    In conclusion, we have proved that a $(\rho,\gamma)$-irregular function has small oscillations in a small part of any interval: for $q\rho>1$ and $\epsilon>0$,
    \begin{equation}\label{eq:smallsmall}
        \frac1{\leb{J}}\leb{\{x\in J:|f(x)-f_J|\leq \epsilon c_f^{-q}\leb{J}^{q(1-\gamma)}\}}
        \lesssim\epsilon^{\frac1q},
    \end{equation}
    for every interval $J$.

    Let us prove now that if \eqref{eq:smallsmall} holds, then $\Lambda(\alpha,0,p,f)>0$ for $\alpha=q(1-\gamma)$ and all $p$ such that $f\in L^p$.
    For an interval $J$, consider the set $A_a$ with the above choice of the parameter $a$, and $\epsilon$ small enough, then
    \[
        \begin{multlined}[.9\linewidth]
            \|f-f_J\|_{\Lnorm^p(J)}
            \geq \|(f-f_J)\mathbf{1}_{A_a^c}\|_{\Lnorm^p(J)}\gtrsim\\
            \gtrsim c_f^{-q}\leb{J}^{q(1-\gamma)}\Bigl(1 - \frac{\leb{A_a}}{\leb{J}}\Bigr)^{\frac1p}
            \gtrsim c_f^{-q}\leb{J}^{q(1-\gamma)},
        \end{multlined}
    \]
    so that in conclusion by \cref{p:casetwo},
    \[
        \Lambda(q(1-\gamma),0,p,f)
        \gtrsim c_f^{-q},
    \]
    which is positive as long as $c_f<\infty$.
    Since $\rho\leq1$, $q$ can be chosen arbitrarily close to $1/\rho$, and thus $q(1-\gamma)$ is arbitrarily close to $(1-\gamma)/\rho$.
    If on the other hand $\rho>1$, we notice \cite[Lemma 7]{GalGub2020} that $f$ is $(\rho_\theta,\gamma_\theta)$-irregular for $\rho_\theta=\theta\rho$ and $\gamma_\theta=1-\theta+\theta\gamma$, and $(1-\gamma_\theta)/\rho_\theta=(1-\gamma)/\rho$.
    So we can choose $\theta$ so that $\rho_\theta\leq1$, use the arguments already developed and conclude, since $q(1-\gamma_\theta)$ can be chosen arbitrarily close to $(1-\gamma_\theta)/\rho_\theta=(1-\gamma)/\rho$.
\end{proof}

We have not found a way to prove any of the opposite implications of \cref{p:rhogamma} above, namely neither that \emph{small oscillations are a small set} (formula \eqref{eq:smallsmall}) yields $(\rho,\gamma)$-irregularity, nor that the Wei condition of order $k=0$ yields \eqref{eq:smallsmall}.

%%%%%%%%%%%%%%%%%%%%
%%%%%%%%%%%%%%%%%%%%
%%%%%%%%%%%%%%%%%%%%

\subsubsection{Stability with respect to regular perturbations}

We recall the definition of Campanato spaces $\Lc^{p,\alpha}$, with $1\leq p<\infty$ and $\alpha>0$ (see for instance \cite{RafSamSam2013}) defined through the seminorm (which we have adapted for easier comparison with the generalized Wei's condition)
\begin{equation}\label{eq:campanato0}
    [f]_{\Lc^{p,\alpha}}
    \vcentcolon= \sup_{\delta>0}\delta^{-\alpha}\bigl(\sup_{\leb{J}=\delta}\|f-f_J\|_{L^p(J)}\bigr)
    \sim\sup_{\delta>0}\delta^{-\alpha}\bigl(\sup_{\leb{J}=\delta}\inf_{c\in\R}\|f-c\|_{L^p(J)}\bigr),
\end{equation}
where the supremum is extended over intervals $J$.
Campanato spaces
\begin{itemize}
    \item are equal to Morrey spaces $L^{p,\alpha}$ (the definition is similar but without averages) when $\alpha\in[0,1/p)$,
    \item are equal to BMO when $\alpha=1/p$,
    \item are equal to H\"older spaces $C^{\alpha-1/p}$ when $\alpha\in(1/p,1+1/p]$,
    \item contain only constant functions when $\alpha>1+1/p$.
\end{itemize}
One can define also higher order Campanato spaces $\Lc^{p,\alpha}_k$ through polynomials \cite{Cam1964},
\begin{equation}\label{eq:campanatok}
    [f]_{\Lc^{p,\alpha}}
    \vcentcolon= \sup_{r>0}r^{-\alpha}\bigl(\sup_{\leb{J}\leq1}\inf_{P\in\Pc_k}\|f-P\|_{L^p(J)}\bigr),
\end{equation}
and
\begin{itemize}
    \item if $\alpha\in[0,1/p)$, $\Lc^{p,\alpha}_k$ is the Morrey space $L^{p,\alpha}$,
    \item if $m+1/p<\alpha<(m+1)+1/p$ and $0\leq m\leq k$, $\Lc^{p,\alpha}_k=C^{m,\alpha-1/p-m}$.
\end{itemize}

\begin{lemma}[Stability]\label{l:stability}
    If $f\in L^p(0,1)$ and $\varphi\in \Lc_k^{p,\alpha+1/p}$, then
    \[
        \Lambda(\alpha,k,p,f+\varphi)
        \lesssim_{k,p} \Lambda(\alpha,k,p,f) + [\varphi]_{\Lc^{p,\alpha+1/p}_k}.
    \]
    In particular, if $\Lambda(\alpha,k,p,f)>0$ and $[\varphi]_{\Lc^{p,\alpha+1/p}_k}$ is sufficiently small, then $\Lambda(\alpha,k,p,f+\varphi)>0$.
\end{lemma}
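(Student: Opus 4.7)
The plan is to prove the inequality at the level of a single interval and then take the infimum, using the crucial algebraic fact that $\Pc_k$ is a vector space. Concretely, any $P\in\Pc_k$ decomposes as $P=P_1+P_2$ with $P_1,P_2\in\Pc_k$, so for every interval $J\subset[0,1]$ with $\leb{J}\leq 1$ the triangle inequality in $\Lnorm^p(J)$ gives
\[
\inf_{P\in\Pc_k}\|(f+\varphi)-P\|_{\Lnorm^p(J)}
\leq \inf_{P_1\in\Pc_k}\|f-P_1\|_{\Lnorm^p(J)} + \inf_{P_2\in\Pc_k}\|\varphi-P_2\|_{\Lnorm^p(J)}.
\]

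Next, after multiplying by $\leb{J}^{-\alpha}$, the $\varphi$-term can be absorbed into the Campanato seminorm. Since $\|g\|_{\Lnorm^p(J)}=\leb{J}^{-1/p}\|g\|_{L^p(J)}$, I would rewrite
\[
\leb{J}^{-\alpha}\inf_{P_2}\|\varphi-P_2\|_{\Lnorm^p(J)}
= \leb{J}^{-(\alpha+1/p)}\inf_{P_2}\|\varphi-P_2\|_{L^p(J)}
\lesssim_{k,p} [\varphi]_{\Lc^{p,\alpha+1/p}_k},
\]
the $+1/p$ shift being precisely the correction needed to pass from the normalized to the unnormalized $L^p$ norm (and the $\lesssim_{k,p}$ absorbing the constant from \cref{p:casetwo} if one prefers to work with the explicit optimal polynomial $P_{k,J,\varphi}$). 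Combining the two displays and taking the infimum over $J$ in the first summand, while using only the uniform Campanato bound in the second, produces the desired
\[
\Lambda(\alpha,k,p,f+\varphi)
\lesssim_{k,p} \Lambda(\alpha,k,p,f) + [\varphi]_{\Lc^{p,\alpha+1/p}_k}.
\]

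For the ``in particular'' statement I would exploit symmetry: applying the estimate just established with the pair $(f,\varphi)$ replaced by $(f+\varphi,-\varphi)$ (and using $[-\varphi]_{\Lc^{p,\alpha+1/p}_k}=[\varphi]_{\Lc^{p,\alpha+1/p}_k}$) yields the reverse inequality
\[
\Lambda(\alpha,k,p,f) \lesssim_{k,p} \Lambda(\alpha,k,p,f+\varphi) + [\varphi]_{\Lc^{p,\alpha+1/p}_k},
\]
which rearranges to a lower bound on $\Lambda(\alpha,k,p,f+\varphi)$, giving positivity whenever $[\varphi]_{\Lc^{p,\alpha+1/p}_k}$ is strictly smaller than $\Lambda(\alpha,k,p,f)$ divided by the implicit constant. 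I do not anticipate any real obstacle in this proof; the argument is purely structural, and the only point requiring genuine care is the normalization bookkeeping between the normalized $L^p$ convention built into $\Lambda$ and the unnormalized $L^p$ convention built into the Campanato seminorm, which is exactly what dictates the $1/p$ shift in the exponent of the latter.
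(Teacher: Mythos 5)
Your proof is correct and follows essentially the same route as the paper's: bound the per-interval infimum by a triangle inequality, absorb the $\varphi$-term into the Campanato seminorm via the $1/p$ normalization shift between $\Lnorm^p(J)$ and $L^p(J)$, take the infimum over $J$ in the $f$-term only, and obtain the ``in particular'' clause by applying the estimate to $f=(f+\varphi)-\varphi$. Your first step is in fact slightly cleaner: using only that $\Pc_k$ is closed under addition gives a genuine $\leq$ (split $P=P_1+P_2$ and infimize independently), whereas the paper passes through the linearity of the explicit $L^2$-optimal polynomial $P_{k,J,f}$ and \cref{p:casetwo}, which introduces a $\lesssim_{k,p}$ constant that is not actually needed at that step.
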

\begin{proof}
    By \cref{p:casetwo} and by the fact that the polynomials in \eqref{eq:poly2} are linear in $f$, it follows that
    \[
        \inf_{P\in\Pc_k}\|f+\varphi - P\|_{\Lnorm^p(J)}
        \lesssim_{k,p}\inf_{P\in\Pc_k}\|f-P\|_{\Lnorm^p(J)}
        + \inf_{P\in\Pc_k}\|\varphi-P\|_{\Lnorm^p(J)},
    \]
    therefore
    \[
        \Lambda(\alpha,k,p,f+\varphi)
        \lesssim_{k,p} \leb{J}^{-\alpha}\inf_{P\in\Pc_k}\|f-P\|_{\Lnorm^p(J)}
        + [\varphi]_{\Lc^{p,\alpha+1/p}_k}.
    \]
    By taking the infimum over all intervals $J$, with $\leb{J}\leq 1$, the conclusion follows.

    For the second statement, notice that
    \[
        \Lambda(\alpha,k,p,f)
        = \Lambda(\alpha,k,p,(f+\varphi)-\varphi)
        \lesssim_{k,p}\Lambda(\alpha,k,p,f+\varphi)
        + [\varphi]_{\Lc^{p,\alpha+1/p}_k},
    \]
    so if $\Lambda(\alpha,k,p,f)>0$ and $[\varphi]_{\Lc^{p,\alpha+1/p}_k}$ is sufficiently small, then $\Lambda(\alpha,k,p,f+\varphi)>0$ as well.
\end{proof}

One would actually expect that $\Lambda(\alpha,k,p,f)>0$ if and only if $\Lambda(\alpha,p,k,f+\varphi)>0$, indipendently from the magnitude of the norm of $\varphi$ (in some suitable space of regular functions).
To this end we define the following variant of the irregularity index, that supports the idea that irregularity is a matter of small scales.

\begin{equation}\label{eq:ourwei}
    \Lambda_\ell(\alpha,k,p,f)
    = \liminf_{\delta\to0}\inf_{\leb{J}\leq\delta,P\in\Pc_k}\leb{J}^{-\alpha}\|f(x) - P(x)\|_{\Lnorm^p(J)},
\end{equation}

We first prove that the local index captures irregularity as much as the original index.

\begin{theorem}\label{t:ourwei}
    Let $\alpha>0$, $k\geq0$, $1\leq p<\infty$. Given $f\in L^p(0,1)$,
    \[
        \Lambda(\alpha,k,p,f) = 0
        \quad\iff\quad\Lambda_\ell(\alpha,k,p,f) = 0.
    \]
\end{theorem}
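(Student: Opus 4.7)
My plan is to treat the two implications separately. The direction $\Lambda_\ell = 0 \Rightarrow \Lambda = 0$ is essentially automatic from monotonicity: writing $\phi(\delta) \vcentcolon= \inf_{\leb{J}\leq \delta,\,P\in\Pc_k}\leb{J}^{-\alpha}\|f-P\|_{\Lnorm^p(J)}$, the infimum is taken over a smaller set as $\delta$ shrinks, so $\phi$ is non-increasing in $\delta$, and hence $\Lambda = \phi(1) \leq \sup_{\delta > 0}\phi(\delta) = \Lambda_\ell$.

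For the nontrivial direction $\Lambda = 0 \Rightarrow \Lambda_\ell = 0$, I pick a minimizing sequence $(J_n, P_n)$ with $\leb{J_n}^{-\alpha}\|f-P_n\|_{\Lnorm^p(J_n)}\to 0$ and, by compactness of $[0,1]$, pass to a subsequence so that $\leb{J_n}\to \ell \in [0,1]$. If $\ell = 0$, then for every $\delta>0$ one has $\leb{J_n} \leq \delta$ eventually, so $\phi(\delta) \leq \leb{J_n}^{-\alpha}\|f-P_n\|_{\Lnorm^p(J_n)} \to 0$, yielding $\Lambda_\ell = 0$. If instead $\ell > 0$, the hypothesis $\alpha > 0$ ensures that $\leb{J_n}^{-\alpha}$ is uniformly bounded, and consequently $\|f-P_n\|_{L^p(J_n)} \to 0$. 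A further extraction makes the endpoints of $J_n$ converge to those of a limit interval $J$ of length $\ell$; for any subinterval $J' \subset J$ whose closure is contained in the interior of $J$, we have $J' \subset J_n$ for all large $n$, so $\|P_n\|_{L^p(J')}$ is bounded by the triangle inequality. Finite-dimensionality of $\Pc_k$ then produces a subsequential uniform limit $P_\infty \in \Pc_k$ on $J'$, and passing to the limit gives $f = P_\infty$ almost everywhere on $J'$. Since $J'$ has positive length, it contains intervals of arbitrarily small length on which $\|f - P_\infty\|_{\Lnorm^p} = 0$, so $\phi(\delta) = 0$ for every $\delta > 0$, i.e., $\Lambda_\ell = 0$.

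The main obstacle is the case $\ell > 0$: one must upgrade an unlocalized minimizing sequence into a genuine interval on which $f$ coincides with a polynomial, which then supplies the tiny "zero intervals" needed to witness $\Lambda_\ell = 0$. The argument combines the compactness of interval endpoints in $[0,1]$ with the finite dimensionality of $\Pc_k$, and the hypothesis $\alpha > 0$ is exactly what prevents the scaling factor $\leb{J_n}^{-\alpha}$ from spoiling the limit passage when the interval lengths fail to shrink.
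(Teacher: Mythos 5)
Your proof is correct, and the route is genuinely different from the paper's. The paper first passes to the dyadic reformulations (via \cref{p:dyadic} and \cref{p:casetwo}), obtaining $\Lambda \sim \inf_n 2^{n\alpha}\min_{J\in\Dc_n}\|f-P_{k,J,f}\|_{\Lnorm^p(J)}$ and $\Lambda_\ell \sim \liminf_n 2^{n\alpha}\min_{J\in\Dc_n}\|f-P_{k,J,f}\|_{\Lnorm^p(J)}$, and then argues by contradiction: if $\Lambda=0$ while $\Lambda_\ell\geq\epsilon_0>0$, the dyadic terms are eventually $\geq\epsilon_0/2$, so the infimum over $n$ must be attained at a finite $n$, producing a dyadic interval on which $f$ coincides a.e.\ with a polynomial; that immediately forces $\Lambda_\ell=0$. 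You instead work directly with a minimizing sequence $(J_n,P_n)$, use compactness of endpoints in $[0,1]$ to extract a limiting interval $J$ of length $\ell$, split on $\ell=0$ (trivial) versus $\ell>0$ (where the boundedness of $\leb{J_n}^{-\alpha}$ upgrades the minimizing sequence to $\|f-P_n\|_{L^p(J')}\to0$ on compact subintervals $J'\Subset J$), and then invoke finite dimensionality of $\Pc_k$ to extract a limit polynomial $P_\infty$ with $f=P_\infty$ a.e.\ on $J'$. Both proofs hinge on the same structural observation — $\Lambda=0$ without vanishing scales forces $f$ to agree with a polynomial on an interval of positive length — but your version is more elementary and self-contained (no appeal to the dyadic reduction or the explicit $L^2$ projections), whereas the paper's version is shorter because it reuses machinery already built for other purposes, at the price of routing the contradiction through the equivalence constants of \cref{p:dyadic}. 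One small stylistic note: you say ``$\alpha>0$ ensures that $\leb{J_n}^{-\alpha}$ is uniformly bounded''; the role of $\alpha>0$ (together with $\leb{J_n}\leq1$) is really to guarantee $\leb{J_n}^{-\alpha}\geq1$, i.e.\ bounded \emph{below}, which is what lets you conclude $\|f-P_n\|_{\Lnorm^p(J_n)}\to0$. The upper bound comes for free from $\ell>0$. This is a phrasing issue, not a gap.
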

\begin{proof}
    Using the arguments of \cref{p:dyadic} and \cref{p:casetwo}, it follows that
    \begin{equation}\label{eq:liminf_dyadic}
        \Lambda_\ell(\alpha,k,p,f)
        \sim_{\alpha,k,p}\liminf_n 2^{n\alpha}\min_{J\in\Dc_n}\|f - P_{k,J,f}\|_{\Lnorm^p(J)},
    \end{equation}
    and, likewise,
    \begin{equation}\label{eq:inf_dyadic}
        \Lambda(\alpha,k,p,f)
        \sim_{\alpha,k,p}\inf_n 2^{n\alpha}\min_{J\in\Dc_n}\|f - P_{k,J,f}\|_{\Lnorm^p(J)}.
    \end{equation}
    Since $\Lambda(\alpha,k,p,f)\leq\Lambda_\ell(\alpha,k,p,f)$, it remains only to show that $\Lambda_\ell(\alpha,k,p,f)=0$ when $\Lambda(\alpha,k,p,f)=0$.

    Assume $\Lambda(\alpha,k,p,f)=0$ and, by contradiction, that $\Lambda_\ell(\alpha,k,p,f) \geq \epsilon_0 > 0$.
    By \eqref{eq:liminf_dyadic}, for $n$ large enough,
    \[
        2^{n\alpha}\min_{J\in\Dc_n}\|f - P_{k,J,f}\|_{\Lnorm^p(J)}
        \geq\frac12\epsilon_0,
    \]
    therefore the infimum in \eqref{eq:inf_dyadic} is attained  at some finite value of $n$.
    In conclusion there are $n\geq0$ and $J\in\Dc_n$ such that $\|f - P_{k,J,f}\|_{L^p(J)}=0$, that is $f$ is {a.\,e.} equal to a polynomial on $J$, and so is regular and $\Lambda_\ell(\alpha,k,p,f)=0$.
\end{proof}

It turns out that stability by (suitable) regular perturbations is more robust in terms of the index $\Lambda_\ell$.
Indeed, recall the vanishing Campanato space $\Vc^{p,\alpha}_k$, defined by the condition
\begin{equation}\label{eq:vcs}
    \lim_{\delta\to0}\sup_{\leb{J}\leq\delta}\leb{J}^{-\alpha}\inf_{P\in\Pc_k}\|f-P\|_{L^p(J)}
    = 0.
\end{equation}
This is a closed subspace of $\Lc^{p,\alpha}_k$ that contains the closure of smooth functions in $\Lc^{p,\alpha}_k$.
We conjecture that actually $\Vc^{p,\alpha}_k$ is equal to the closure of smooth functions, although the proof of this statement goes beyond the scopes of this paper.
This statement is proved in the case $\alpha\in(0,1)$ in \cite{Ope2003}.

\begin{proposition}\label{p:stability_revisited}
    If $f\in L^p(0,1)$ and $\varphi\in \Vc_k^{p,\alpha+1/p}$, then
    \[
        \Lambda_\ell(\alpha,k,p,f+\varphi)
        \sim_{k,p} \Lambda_\ell(\alpha,k,p,f).
    \]
    In particular, $\Lambda(\alpha,k,p,f+\varphi)>0$ if and only if $\Lambda(\alpha,k,p,f)>0$.
\end{proposition}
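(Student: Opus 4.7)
The plan is to follow the same triangle inequality strategy used in \cref{l:stability}, but to exploit the $\liminf$ in the definition of $\Lambda_\ell$ so that the Campanato-type contribution of the perturbation can be made to vanish, rather than appearing as an additive seminorm as in \cref{l:stability}.

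First I would translate the hypothesis $\varphi\in\Vc_k^{p,\alpha+1/p}$ into the normalized form adapted to $\Lambda_\ell$. Using the elementary identity $\leb{J}^{-\alpha-1/p}\|\cdot\|_{L^p(J)}=\leb{J}^{-\alpha}\|\cdot\|_{\Lnorm^p(J)}$, the vanishing condition \eqref{eq:vcs} for $\varphi$ at Campanato index $\alpha+1/p$ becomes
\[
    \eta(\delta)\vcentcolon=\sup_{\leb{J}\leq\delta}\leb{J}^{-\alpha}\inf_{P\in\Pc_k}\|\varphi-P\|_{\Lnorm^p(J)}\xrightarrow{\delta\to 0^+}0.
\]
This is the crucial reformulation: the $1/p$ shift in the Campanato index exactly absorbs the difference between the normalized and unnormalized $L^p$ norm.

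Next, exactly as in the proof of \cref{l:stability}, I would invoke \cref{p:casetwo} together with the linearity of the optimal projection $P_{k,J,g}$ from \eqref{eq:poly2} in $g$, to obtain, for every interval $J\subset[0,1]$, the pointwise bound
\[
    \leb{J}^{-\alpha}\inf_{P\in\Pc_k}\|f+\varphi-P\|_{\Lnorm^p(J)}\lesssim_{k,p}\leb{J}^{-\alpha}\inf_{P\in\Pc_k}\|f-P\|_{\Lnorm^p(J)}+\leb{J}^{-\alpha}\inf_{P\in\Pc_k}\|\varphi-P\|_{\Lnorm^p(J)}.
\]
Taking the infimum over intervals with $\leb{J}\leq\delta$ and using the trivial inequality $\inf_J(a_J+b_J)\leq\inf_J a_J+\sup_J b_J$, the right-hand side is dominated by
\[
    \inf_{\leb{J}\leq\delta}\leb{J}^{-\alpha}\inf_{P\in\Pc_k}\|f-P\|_{\Lnorm^p(J)}+\eta(\delta).
\]
Applying $\liminf_{\delta\to 0^+}$ on both sides and using $\eta(\delta)\to 0$ yields $\Lambda_\ell(\alpha,k,p,f+\varphi)\lesssim_{k,p}\Lambda_\ell(\alpha,k,p,f)$. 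The reverse direction is obtained by the symmetric substitution $f=(f+\varphi)+(-\varphi)$, noting that $-\varphi\in\Vc_k^{p,\alpha+1/p}$, which proves the equivalence $\Lambda_\ell(\alpha,k,p,f+\varphi)\sim_{k,p}\Lambda_\ell(\alpha,k,p,f)$.

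For the \emph{in particular} statement, I would combine this equivalence with \cref{t:ourwei}, which ensures $\Lambda(\alpha,k,p,g)>0\iff\Lambda_\ell(\alpha,k,p,g)>0$ for every admissible $g$; applying this to both $g=f$ and $g=f+\varphi$ produces the desired biconditional. I do not expect any serious obstacle: the proof is essentially a $\liminf$ variant of \cref{l:stability}, and the only subtlety is correctly matching the normalization, namely that the shift $\alpha\mapsto\alpha+1/p$ in the Campanato index is exactly what is needed to make the perturbation term $\eta(\delta)$ tend to zero in the $\Lnorm^p$ scale used by $\Lambda_\ell$.
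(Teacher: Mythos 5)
Your proof is correct and takes essentially the same route as the paper: the same \cref{p:casetwo}-based triangle inequality, the same passage from the vanishing Campanato condition to a quantity $\eta(\delta)\to 0$ that vanishes in the $\liminf$, and the symmetric substitution for the converse. The only (welcome) addition is that you explicitly invoke \cref{t:ourwei} to derive the final biconditional for $\Lambda$ from the one for $\Lambda_\ell$, a step the paper's proof leaves implicit.
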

\begin{proof}
    Fix $\delta>0$ and let $J$ be an interval with $\leb{J}=\delta$.
    We use \cref{p:casetwo} to obtain that
    \[
        \inf_{P\in\Pc_k}\|f+\varphi-P\|_{\Lnorm^p(J)}
        \lesssim_{k,p} \inf_{P\in\Pc_k}\|f-P\|_{\Lnorm^p(J)} + \inf_{P\in\Pc_k}\|\varphi-P\|_{\Lnorm^p(J)}.
    \]
    Denote by $H_\varphi(\delta)$ the value, with $f$ replaced by $\varphi$, whose limit is $0$ in \eqref{eq:vcs}.
    Then
    \[
        \inf_{P\in\Pc_k}\|\varphi-P\|_{\Lnorm^p(J)}
        \leq \delta^\alpha H_\varphi(\delta),
    \]
    therefore,
    \[
        \Lambda_\ell(\alpha,k,p,f+\varphi)
        \lesssim\delta^{-\alpha}\inf_{P\in\Pc_k}\|f-P\|_{\Lnorm^p(J)} + H_\varphi(\delta).
    \]
    By taking the infimum over intervals (notice that just one quantity depends on the intervals), and then by taking the limit $\delta\to0$, it follows that
    \[
        \Lambda_\ell(\alpha,k,p,f+\varphi)
        \lesssim_{k,p} \Lambda_\ell(\alpha,k,p,f).
    \]
    By applying the same inequality to $f=(f+\varphi)-\varphi$, the opposite estimate follows.
\end{proof}

%%%%%%%%%%%%%%%%%%%%%%%%%%%%%%%%%%%%%%%%
%%%%%%%%%%%%%%%%%%%%%%%%%%%%%%%%%%%%%%%%
%%%%%%%%%%%%%%%%%%%%%%%%%%%%%%%%%%%%%%%%

\section{Prevalence of diffusion enhancing velocities}
\label{sec:prevalence}

Building on \cref{sec:Wei} and on the prevalence results developed in \cite{GalGub2020,GalGub2023}, to whom we refer for an introduction of such notion, we prove here that diffusion enhancing velocities form a prevalent set in $B^\alpha_{1,\infty}$, ensuring that the bounds in \cref{sec:enhdiff} are not purely theoretical.

First of all set, for $\alpha>0$ and $\lambda>0$,
\[
    G_\alpha(y,\delta,k,f)
    \coloneqq
    \avint_y^{y+\delta}|\Delta_\delta^{k+1}f(x)|^{-\frac1\alpha}\,dx
\]
and
\[
    K(\alpha,\lambda,k,f)
    \coloneqq
    \sup_{n\geq0,m=1,2,\dots,2^n-1} 2^{-\lambda n}G_\alpha(y_{n,m},\delta_n,k,f),
\]
where $\delta_n \coloneqq \frac\pi{2^{n+1}}$ and $y_{n,m} \coloneqq -\pi + 2\pi \frac{m}{2^n}$.
These quantities will prove useful in showing that Wei's condition holds for almost all elements of $B^\alpha_{1,\infty}$ thanks to \cref{lemma:WeigreaterthanK} below, which allows us to work with them instead of $\Lambda$.

\begin{lemma}\label{lemma:WeigreaterthanK}
    For $\lambda>1$, $\beta>0$ and $1 \leq p < \infty$,
    \[
        \Lambda(\lambda\beta,p,k,f)
        \gtrsim_{\alpha,k,p} K(\alpha,\lambda,k,f)^{-\alpha}
    \]
\end{lemma}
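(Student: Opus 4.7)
The plan is to reduce the infimum defining $\Lambda$ over arbitrary intervals to the discrete supremum appearing in the definition of $K$, by combining the reverse inequality of \cref{l:deltainv} with a short covering argument. Writing $K=K(\alpha,\lambda,k,f)$ for brevity, it is enough to prove that for every interval $I\subset[0,1]$ with $\leb{I}\leq 1$,
\[
    \leb{I}^{-\lambda\alpha}\inf_{P\in\Pc_k}\|f-P\|_{\Lnorm^p(I)}
    \gtrsim_{\alpha,k,p} K^{-\alpha},
\]
since the conclusion then follows by taking the infimum in $I$. (I read the statement's exponent $\lambda\beta$ as $\lambda\alpha$, since $\beta$ is not otherwise defined and the argument below produces exactly this exponent.)

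Given $I$, I would first choose $n\in\N$ and $m\in\{1,\ldots,2^n-1\}$ such that $J_{n,m}\vcentcolon=(y_{n,m},y_{n,m}+(k+2)\delta_n)\subset I$ with $\leb{J_{n,m}}\sim_k\leb{I}$. Since the grid $\{y_{n,m}\}$ has spacing $2\pi/2^n$ while $(k+2)\delta_n=(k+2)\pi/2^{n+1}$, an elementary counting shows that one such $m$ exists as soon as $\leb{I}\gtrsim_k 2^{-n}$, so taking the smallest such $n$ yields $\leb{I}\sim_k 2^{-n}\sim_k\leb{J_{n,m}}$ and the desired inclusion. The trivial comparison
\[
    \|f-P\|_{\Lnorm^p(I)}
    \geq \bigl(\leb{J_{n,m}}/\leb{I}\bigr)^{1/p}\|f-P\|_{\Lnorm^p(J_{n,m})}
\]
then gives, after taking infimum in $P\in\Pc_k$, the bound $\inf_{P\in\Pc_k}\|f-P\|_{\Lnorm^p(I)}\gtrsim_{k,p}\inf_{P\in\Pc_k}\|f-P\|_{\Lnorm^p(J_{n,m})}$.

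Next I would apply \cref{l:deltainv} to $J_{n,m}$ with $a=1/\alpha>0$, which yields
\[
    \inf_{P\in\Pc_k}\|f-P\|_{\Lnorm^p(J_{n,m})}
    \gtrsim_{k,p} G_\alpha(y_{n,m},\delta_n,k,f)^{-\alpha}.
\]
The definition of $K$ as a supremum gives $G_\alpha(y_{n,m},\delta_n,k,f)\leq 2^{\lambda n}K$, so raising to the power $-\alpha$ yields $G_\alpha(y_{n,m},\delta_n,k,f)^{-\alpha}\geq 2^{-\lambda\alpha n}K^{-\alpha}$. Combining the previous chain of inequalities and using $2^{-n}\sim_k\leb{I}$ produces
\[
    \inf_{P\in\Pc_k}\|f-P\|_{\Lnorm^p(I)}
    \gtrsim_{\alpha,k,p} \leb{I}^{\lambda\alpha}K^{-\alpha},
\]
which is precisely the desired inequality.

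The only mildly delicate point is the geometric packing in the second paragraph: the intervals $J_{n,m}$ do not tile the torus at scale $2^{-n}$, so one must check that by possibly enlarging $n$ by a universal constant depending on $k$, at least one $J_{n,m}$ lies inside any $I$ of comparable length. Everything else is a mechanical assembly of results already established in \cref{sec:Wei}, and the identity $\lambda\alpha$ in the exponent is forced by balancing the scaling $|J_{n,m}|^{-\lambda\alpha}\sim 2^{\lambda\alpha n}$ against the factor $2^{-\lambda\alpha n}$ coming from the defining supremum of $K$.
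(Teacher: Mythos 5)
Your proof is correct and follows essentially the same path as the paper's: reduce the interval $I$ in the definition of $\Lambda$ to a grid interval $[y_{n,m},y_{n,m}+(k+2)\delta_n]$ of comparable length contained in $I$, apply \cref{l:deltainv} with $a=1/\alpha$ to dominate $G_\alpha(y_{n,m},\delta_n,k,f)^{-\alpha}$ by the local polynomial-approximation error, and then use the defining supremum of $K$ to absorb the $2^{\lambda\alpha n}\sim\leb{I}^{-\lambda\alpha}$ factor. You also correctly read $\lambda\beta$ in the statement as $\lambda\alpha$ (a typo in the paper), and you make explicit the geometric packing step — choosing $n,m$ so that the grid interval fits inside $I$ with comparable length — which the paper asserts without comment.
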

\begin{proof}
    The inequality follows as in \cite[Lemma 4.12]{GalGub2023}.
    Fix $y$ and $\delta$ and let $n,m$ be such that $\delta\in(\pi2^{-n},\pi2^{-n+1}]$ and $y\in[y_{n,m-1},y_{n,m}]$, so that $[y_{n,m},y_{n,m}+(k+2)\delta_n]\subset[y,y+(k+2)\delta]$.
    So, by \cref{l:deltainv},
    \[
        \begin{multlined}[.95\linewidth]
            \delta^{-\lambda\alpha}\inf_{P\in\Pc_k}\|f-P\|_{\Lnorm^p(y,y+(k+2)\delta)}
            \gtrsim_{k,p}
            \delta^{-\lambda\alpha}\Bigl(\avint_{y_{m,n}}^{y_{m,n}+\delta_n}|\Delta_\delta^{k+1}f(x)|^{-\frac1\alpha}\,dx\Bigr)^{-\alpha} \\
            =
            \delta^{-\lambda\alpha}G_\alpha(y_{n,m},\delta_n,k,f)^{-\alpha}
            \gtrsim
            K(\alpha,\lambda,k,f)^{-\alpha} ,
        \end{multlined}
    \]
    the last inequality following from the definition of $K$.
\end{proof}

The rest of this section is divided in two parts: \cref{subsec:prevforstoch} is devoted devoted to the proof of \cref{th:prevgeneral}, which ensures that elements of Besov spaces of suitable regularity satisfy Wei's condition, while \cref{subsec:abstractresult} connects such result to diffusion enhancement via \cref{th:final}.

%%%%%%%%%%%%%%%%%%%%
%%%%%%%%%%%%%%%%%%%%
%%%%%%%%%%%%%%%%%%%%

\subsection{Stochastic processes and prevalence}\label{subsec:prevforstoch}

We first recall the notion of \emph{strongly local nondeterminism}, see \cite{MonPit1987}, and \cite{Xia2007} for a more recent review on the subject.
A Gaussian process $(X_t)_{t\geq0}$is $H$-strongly locally nondeterministic ($H$-SLND) if there is $c>0$ such that for every $0\leq s\leq t$,
\[
    \Var(X_t\mid\mathcal{F}_s)
    \geq c|t-s|^{2H},
\]
where $\mathcal{F}_s=\sigma(X_r:0\leq r\leq s)$ is the fuiltration generated by the process.

\begin{lemma}\label{lemma:finexpmomcond}
    Let $X$ be a stochastic process, $X:[0,\pi]\rightarrow\R$.
    If, for some $\lambda>0$ and $\alpha\in\R$,
    \[
        \sup_{\delta\in(0,1),y\in[0,\pi-\delta]}\E\Bigl[e^{\lambda G_\alpha(y,\delta,k,X)}\Bigr]
        <\infty,
    \]
    then $\Pb\left(\Lambda(\beta,p,k,X)>0\right)=1$ for all $\beta>\alpha$.
\end{lemma}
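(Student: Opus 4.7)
The strategy is to combine a standard Borel--Cantelli-type argument, based on the exponential Chebyshev inequality, with the deterministic inequality established in Lemma \ref{lemma:WeigreaterthanK}. The hypothesis provides uniform exponential tails for the random variables $G_\alpha(y_{n,m}, \delta_n, k, X)$ across the countable dyadic grid indexing $K$; a union bound then shows that $K(\alpha, \lambda_2, k, X)$ is almost surely finite for any $\lambda_2 > 1$; and Lemma \ref{lemma:WeigreaterthanK} converts this into almost sure positivity of $\Lambda(\lambda_2 \alpha, p, k, X)$, from which the monotonicity of $\Lambda$ in its regularity parameter (Lemma \ref{l:lambda_basic}) yields the desired conclusion for every $\beta > \alpha$.

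More concretely, set $M \vcentcolon= \sup_{y,\delta} \E[e^{\lambda G_\alpha(y,\delta,k,X)}] < \infty$. The exponential Chebyshev inequality gives, uniformly in $(n,m)$,
\[
\Pb\bigl(G_\alpha(y_{n,m}, \delta_n, k, X) > s\bigr) \leq M e^{-\lambda s}, \qquad s > 0.
\]
Fix a parameter $\lambda_2 > 1$. A union bound over the countable family defining $K$, followed by this tail estimate applied at level $s = 2^{\lambda_2 n} t$, produces
\[
\Pb\bigl(K(\alpha, \lambda_2, k, X) > t\bigr) \leq M \sum_{n \geq 0} 2^n e^{-\lambda\, 2^{\lambda_2 n} t}, \qquad t > 0.
\]
Since $\lambda_2 > 0$, the doubly exponential decay absorbs the polynomial factor $2^n$, the series converges for every $t > 0$, and its sum tends to $0$ as $t \to \infty$ by dominated convergence. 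Hence $K(\alpha, \lambda_2, k, X) < \infty$ almost surely.

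To conclude, fix $\beta > \alpha$ and, working in the nontrivial regime $\alpha > 0$ (outside which the hypothesis on $G_\alpha$ and Lemma \ref{lemma:WeigreaterthanK} do not have their natural meaning), choose $\lambda_2 \in (1, \beta/\alpha]$; this interval is nonempty precisely because of the strict inequality $\beta > \alpha$. Lemma \ref{lemma:WeigreaterthanK} then yields
\[
\Lambda(\lambda_2 \alpha, p, k, X) \gtrsim K(\alpha, \lambda_2, k, X)^{-\alpha} > 0
\]
on the almost sure event constructed above, and the monotonicity of $\Lambda$ in its regularity parameter gives $\Lambda(\beta, p, k, X) \geq \Lambda(\lambda_2 \alpha, p, k, X) > 0$ almost surely, as required.

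I do not expect any deep obstacle: the argument reduces to an exponential-Chebyshev-plus-union-bound routine coupled with the already available deterministic machinery of Section \ref{sec:Wei}. The only care needed is bookkeeping the two distinct roles of the symbol ``$\lambda$'' --- the exponential-moment exponent fixed by the hypothesis versus the free dyadic-scaling exponent $\lambda_2$ parameterising $K$ --- and verifying that the constraint $\lambda_2 \in (1, \beta/\alpha]$ is nonempty, which is exactly what the strict inequality $\beta > \alpha$ guarantees.
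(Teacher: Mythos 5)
Your argument is correct and follows essentially the same route as the paper: establish that $K(\alpha,\lambda_2,k,X)<\infty$ almost surely from the uniform exponential moment bound, then convert this to almost sure positivity of $\Lambda$ via Lemma \ref{lemma:WeigreaterthanK} and the monotonicity of $\Lambda$ in its first argument. The only cosmetic difference is in the bookkeeping of the a.s. finiteness step — the paper packages the exponential moments into a single auxiliary variable $Y=\sum_n 2^{-2n}\sum_m e^{\lambda G_\alpha}$ with $\E[Y]<\infty$ and bounds each $G_\alpha$ pointwise by $\tfrac{1}{\lambda}\log(2^{2n}Y)$, whereas you apply the exponential Chebyshev inequality termwise and a union bound; both are equivalent ways of running the same first-moment argument.
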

\begin{proof}
    By \cref{lemma:WeigreaterthanK}, given $\lambda>1$,
    \[
        \Pb\left( \Lambda(\lambda\alpha,p,k,X)>0 \right)
        \geq
        \Pb\left( K(\alpha,\lambda,k,X) < \infty \right) .
    \]
    Set now
    \[
        Y
        \coloneqq
        \sum_{n\in\N} 2^{-2n} \sum_{m=1}^{2^n-1} \exp\left( \lambda G_\alpha(y_{m,n}, \delta_n, k, X) \right)
    \]
    and notice that $\E \left[ Y \right] < \infty$, hence $\Pb \left(Y<\infty \right) = 1$.
    Since for all $m,n$ we have $G_\alpha(y_{m,n}, \delta_n, k, X) \leq \frac{1}{\lambda} \log(2^{2n} Y) \lesssim \frac{n}{\lambda} (1+\log Y)$, the thesis follows because
    \[
        \sup_{m,n\in\N} \frac1n G_\alpha(y_{m,n}, \delta_n, k, X) \lesssim \frac1\lambda (1+\log Y) < \infty \quad \Pb\text{-a.s.}
    \]
    and $2^{-n\lambda} \lesssim_\lambda n^{-1}$ for all $\lambda > 0$.
\end{proof}

\begin{lemma}\label{lemma:finexpmom}
    Let $X:[0,\pi]\rightarrow\R$ be a Gaussian and $H$-strongly locally nondeterministic stochastic process, with $H>1$.
    Then for all $\alpha>H$ and $k\geq1$ there exists $\lambda>0$ such that
    \[
        \sup_{\delta\in(0,1),\bar{y}\in[0,\pi-\delta]}\E\Bigl[e^{\lambda G_\alpha(\bar{y},\delta,k,Y)}\Bigr]
        <\infty.
    \]
\end{lemma}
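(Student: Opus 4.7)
The plan is to bound the exponential moment via the Taylor expansion $\E[\exp(\lambda G_\alpha)]=\sum_{n\geq 0}(\lambda^n/n!)\E[G_\alpha^n]$ and to estimate each factorial moment through a joint Gaussian negative-moment bound that exploits the $H$-SLND property.

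First, for the single-point estimate, I would decompose $\Delta_\delta^{k+1}X(x)$ as a linear combination of the $k+2$ Gaussian values $X(x+j\delta)$, $j=0,\ldots,k+1$. Conditioning on $\mathcal{F}_{x+k\delta}$ freezes all of these except $X(x+(k+1)\delta)$, whose conditional variance is at least $c\delta^{2H}$ by $H$-SLND. The choice $k\geq 1$ is precisely what guarantees this bound is of the right order; smaller $k$ would be overpowered by the unconditional regularity of $X$. Since $\alpha>H>1$ forces $1/\alpha\in(0,1)$, the Gaussian negative-moment identity $\E[|N|^{-s}]\lesssim \sigma^{-s}$ applies and gives $\E[|\Delta_\delta^{k+1}X(x)|^{-1/\alpha}]\lesssim \delta^{-H/\alpha}$ uniformly in $x$.

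For higher moments, by Fubini and symmetrization,
\[
\E[G_\alpha^n]=\frac{n!}{\delta^n}\int_{\bar y<x_1<\cdots<x_n<\bar y+\delta}\E\Bigl[\prod_{i=1}^n|\Delta_\delta^{k+1}X(x_i)|^{-1/\alpha}\Bigr]\,dx_1\cdots dx_n.
\]
For ordered points, I would iterate the conditioning on the filtrations $\mathcal{F}_{x_{i-1}+(k+1)\delta}$: because $x_i-x_{i-1}\leq\delta$, only the value $X(x_i+(k+1)\delta)$ is ``new'' at step $i\geq 2$ (the other summands defining $\Delta_\delta^{k+1}X(x_i)$ are measurable with respect to the conditioning $\sigma$-algebra), and SLND gives conditional variance at least $c(x_i-x_{i-1})^{2H}$. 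Repeated application of the tower property together with the Gaussian negative-moment bound yields
\[
\E\Bigl[\prod_{i=1}^n|\Delta_\delta^{k+1}X(x_i)|^{-1/\alpha}\Bigr]\lesssim C^n\,\delta^{-H/\alpha}\prod_{i=2}^n(x_i-x_{i-1})^{-H/\alpha}.
\]
Changing variables to the gaps $g_i=x_i-x_{i-1}$ reduces the integration to a Dirichlet-type simplex integral $\int_{g_i>0,\sum g_i\leq\delta}\prod g_i^{-H/\alpha}\,dg$; the assumption $\alpha>H$ is exactly what makes the exponents $-H/\alpha\in(-1,0)$ integrable and produces a super-factorial denominator of the form $\Gamma\bigl(n(1-H/\alpha)+c_0\bigr)^{-1}$ via the classical Dirichlet formula.

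Plugging the resulting moment bound into the Taylor series leads to a Mittag-Leffler-type sum that converges for $\lambda>0$ small. The main technical obstacle is delivering the estimate uniformly in $\delta\in(0,1)$: the pointwise bound produces a factor $\delta^{-nH/\alpha}$ in each term, and the required cancellation must come entirely from the super-factorial decay supplied by the Dirichlet integration. Balancing these two contributions, and verifying that the sharp threshold $\alpha>H$ makes the Mittag-Leffler series uniformly summable in $\delta$, is the delicate point where the full strength of $H$-SLND is used.
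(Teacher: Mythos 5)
Your approach is genuinely different from the paper's. The paper's proof establishes a \emph{conditional first-moment bound}: after showing that $\Delta_\delta^{k+1}X$ inherits the $H$-SLND property, it uses the orthogonal Gaussian decomposition and the negative-moment bound to obtain
\[
\E\Bigl[\int_z^{\bar y+\delta}|\Delta_\delta^{k+1}X(y)|^{-1/\alpha}\,dy\;\Big|\;\mathcal{G}_z\Bigr]
\lesssim\int_z^{\bar y+\delta}|y-z|^{-H/\alpha}\,dy
\leq\int_0^1 u^{-H/\alpha}\,du =: C,
\]
uniformly in $\delta$, $\bar y$, $z$, and then feeds this into a Khas'minskii/Garsia-type abstract lemma (\cite[Lemma 4.14]{GalGub2023}) that upgrades uniformly bounded conditional first moments to finite exponential moments. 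You instead sum the factorial moments directly, iterating the same conditional negative-moment estimate along an ordered tuple and closing with a Dirichlet/Beta identity on the simplex. The probabilistic core --- SLND of $\Delta_\delta^{k+1}X$ together with the Gaussian negative-moment inequality, which requires $\alpha>H>1$ so that $1/\alpha<1$ --- is identical in both arguments; what differs is whether the exponential moment is controlled by an abstract conditional-moment lemma or by summing a Mittag--Leffler-type series.

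The step you describe as ``the delicate point'' does not, however, work out as written. The super-factorial denominator produced by the Dirichlet identity is $\delta$-independent, so it cannot absorb a factor $\delta^{-nH/\alpha}$ that, for every fixed $n$, diverges as $\delta\to0$; already $\E[G_\alpha]\sim\delta^{-H/\alpha}$ under your normalization, so no fixed $\lambda>0$ could make $\sup_\delta\E[e^{\lambda G_\alpha}]$ finite. This $\delta^{-nH/\alpha}$ comes entirely from the $\delta^{-n}$ in front of your simplex integral, that is, from the $\avint$ prefactor. If one works instead with the \emph{un}normalized integral $\int_{\bar y}^{\bar y+\delta}|\Delta_\delta^{k+1}X(x)|^{-1/\alpha}\,dx$ --- which is precisely the object the paper's conditional bound controls, and the one \cite[Lemma 4.14]{GalGub2023} is applied to --- the surviving power becomes $\delta^{n(1-H/\alpha)}\leq1$ and your series converges for all $\lambda>0$ uniformly in $\delta$. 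So the moment-method route you propose is viable, and arguably more self-contained than the paper's appeal to \cite{GalGub2023}, but the obstacle you flagged is real: it traces back to the averaged normalization of $G_\alpha$, not to anything that the Dirichlet super-factorial or the full strength of $H$-SLND can rescue.
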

\begin{proof}
    Consider the process $Y = |\Delta_\delta^{k+1} X|^{-\frac1\alpha}$ and the filtrations $\mathcal{F}_y = \sigma\left( \left\{ X_z, z\leq y \right\} \right)$, $\mathcal{G}_y = \mathcal{F}_{y+(k+2)\delta}$.
    Clearly, $\Delta_\delta^{k+1} X(y)$ is $\mathcal{G}_y$ adapted and $\forall [x,y] \subseteq [\bar{y}, \bar{y}+(k-1)\delta]$ we have
    \[
        \Var\left( \Delta_\delta^{k+1} X(y) | \mathcal{G}_x \right)
        =
        \Var\left( X(y+(k+1)\delta) | \mathcal{F}_{x+(k+1)\delta} \right)
        \geq C_X |y-x|^{2H} .
    \]
    This means that $\Delta_\delta^{k+1} X$ is itself $H$-SLND, hence we can write $\Delta_\delta^{k+1} X(y) = Z_{x,y} + \tilde{Z}_{x,y}$, with $Z_{x,y}$ adapted to $\mathcal{G}_x$ and $\tilde{Z}_{x,y}$ Gaussian and independent from $\mathcal{G}_x$.
    It follows that
    \[
        \E \left[ \int_z^{\bar{y}+\delta} |\Delta_\delta^{k+1} X(y) |^{-{\frac{1}{\alpha}}} \bigg| \mathcal{G}_z \right]
        =
        \int_z^{\bar{y}+\delta} \E \left[ |Z_{z,y} + \cdot |^{-{\frac{1}{\alpha}}} \right] (\tilde{Z}_{z,y}) dy
    \]
    and we can therefore proceed like \cite[Lemma 4.16]{GalGub2023}), eventually obtaining that
    \[
        \E \left[ \int_z^{\bar{y}+\delta} |\Delta_\delta^{k+1} X(y) |^{-{\frac{1}{\alpha}}} \bigg| \mathcal{G}_z \right]
        \lesssim
        \int_z^{\bar{y}+\delta} |y-z|^{-\frac H\alpha} d y
        \leq \int_0^1 |y|^{-\frac H\alpha} \coloneqq C .
    \]
    Since $C$ is deterministic and uniform over the choices of $\delta\in(0,1)$, $\bar{y}\in\Tb$ and $z\in[\bar{y},\bar{y}+\delta]$, \cite[Lemma 4.14]{GalGub2023} yields the thesis.
\end{proof}

\begin{theorem}\label{th:SLND->Wei}
    If $X:[0,\pi]\rightarrow\R$ is a Gaussian and $H$-SLND stochastic process, with $H>1$, then
    \[
        \Pb \left( \Lambda(\alpha,p,k,X)>0 \right) =1,
    \]
    for all $\alpha>H$, $1\leq p<\infty$ and $k\geq1$.
\end{theorem}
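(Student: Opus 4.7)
The plan is to chain the two preceding lemmas, since \cref{lemma:finexpmom} produces the exponential-moment control required as input by \cref{lemma:finexpmomcond}, and the latter converts such control into positivity of $\Lambda$. The only flexibility left is the choice of an intermediate exponent, and the strict inequality $\alpha > H$ in the hypothesis provides exactly the margin that makes the chaining possible.

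First, I would fix an intermediate exponent $\alpha' \in (H, \alpha)$, for instance $\alpha' = \tfrac{1}{2}(H+\alpha)$. Since $X$ is Gaussian and $H$-SLND with $H>1$, and since $\alpha' > H$ and $k \geq 1$, \cref{lemma:finexpmom} (applied with parameter $\alpha'$) produces a constant $\lambda > 0$ such that
\[
    \sup_{\delta \in (0,1),\, \bar y \in [0, \pi - \delta]} \E\bigl[e^{\lambda G_{\alpha'}(\bar y, \delta, k, X)}\bigr] < \infty.
\]
Next, I would feed this bound into \cref{lemma:finexpmomcond}, applied with the exponent $\alpha'$ and the same $\lambda$. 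Its conclusion yields $\Pb(\Lambda(\beta, p, k, X) > 0) = 1$ for every $\beta > \alpha'$. Since $\alpha > \alpha'$ by construction, specializing $\beta = \alpha$ gives the claim, and the conclusion holds uniformly in $p \in [1, \infty)$ because neither the definition of $G_{\alpha'}$ nor the exponential-moment estimate depends on $p$.

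There is essentially no serious obstacle beyond getting the parameter bookkeeping right. The subtle point is that one must \emph{not} apply \cref{lemma:finexpmom} directly with the exponent $\alpha$ of the target conclusion, because \cref{lemma:finexpmomcond} only returns positivity of $\Lambda(\beta,\cdot,\cdot,\cdot)$ for $\beta$ \emph{strictly} greater than the exponent used to bound the exponential moments. Inserting the auxiliary $\alpha' \in (H, \alpha)$ resolves this gap and uses precisely the assumption $\alpha > H$; the hypothesis $H > 1$ is not needed here and is instead consumed upstream in \cref{lemma:finexpmom}.
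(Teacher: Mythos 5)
Your proposal is correct and is precisely the "simple combination of Lemmas~\ref{lemma:finexpmomcond} and~\ref{lemma:finexpmom}" that the paper invokes; the introduction of the auxiliary exponent $\alpha' \in (H,\alpha)$ is exactly what is needed, since Lemma~\ref{lemma:finexpmomcond} returns positivity of $\Lambda$ only at exponents \emph{strictly} above the one used for the exponential-moment bound. Nothing is missing.
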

\begin{proof}
    The proof is a simple combination of \cref{lemma:finexpmomcond,lemma:finexpmom}.
\end{proof}

\begin{theorem}\label{th:prevgeneral}
    If $1\leq p<\infty$, $k\geq0$ and $\alpha>0$, then the set
    \[
        \{f\in B^\alpha_{p,\infty}(0,\pi): \Lambda(\beta,p,k,f)>0\text{ for all }\beta>\alpha\}
    \]
    is prevalent in $B^\alpha_{p,\infty}(0,\pi)$.
\end{theorem}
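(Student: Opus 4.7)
The plan is to find, for each $\beta>\alpha$, a Borel probe measure on $B^\alpha_{p,\infty}(0,\pi)$ whose translates almost surely lie in the single-threshold set $S_\beta:=\{f:\Lambda(\beta,p,k,f)>0\}$. Since countable intersections of prevalent sets are prevalent and $\bigcap_{\beta>\alpha}S_\beta=\bigcap_n S_{\beta_n}$ for any sequence $\beta_n\downarrow\alpha$, prevalence of the sets $S_{\beta_n}$ will yield the theorem. The case $k=0$ reduces to $k=1$ via the monotonicity of $\Lambda$ in $k$ (\cref{l:lambda_basic}), so I assume $k\geq 1$ throughout.

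For the single-threshold problem, fix $\beta>\alpha>0$ and pick $H\in(\alpha+1,\beta+1)$; note that $H>1$ automatically. Let $(W_y)_{y\in[0,\pi]}$ be a centered Gaussian process that is $H$-SLND and whose sample paths a.s.\ lie in $B^{\alpha+1}_{p,\infty}(0,\pi)$ (for example a suitably normalised fractional Brownian motion of Hurst index $H$, or a random wavelet/Fourier series with dyadic scaling comparable to $2^{-(\alpha+1)n}$). Let the probe measure $\mu_\beta$ be the law of $X:=\partial_y W$, which then a.s.\ belongs to $B^\alpha_{p,\infty}(0,\pi)$.

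Given $f_0\in B^\alpha_{p,\infty}(0,\pi)$, take its antiderivative $F_0\in B^{\alpha+1}_{p,\infty}$. Because $W+F_0$ differs from $W$ by the deterministic function $F_0$, conditional variances are unchanged and $W+F_0$ is still Gaussian and $H$-SLND. Applying \cref{th:SLND->Wei} with exponent $\beta+1>H$ and order $k+1\geq1$ gives $\Lambda(\beta+1,p,k+1,W+F_0)>0$ almost surely. The differentiation inequality in \cref{l:lambda_basic} together with the $p$-monotonicity of $\Lambda$ then yields
\[
\Lambda(\beta,p,k,X+f_0)
\geq\Lambda(\beta,1,k,W'+f_0)
\geq\Lambda(\beta+1,p,k+1,W+F_0)
>0
\]
almost surely, which is precisely prevalence of $S_\beta$.

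The main obstacle is the concrete construction of $W$ satisfying \emph{simultaneously} (i) $H$-SLND for some $H$ in the prescribed window $(\alpha+1,\beta+1)$, (ii) sample paths a.s.\ in $B^{\alpha+1}_{p,\infty}(0,\pi)$, and (iii) compatibility with the interval domain (appropriate boundary or periodic extension so that $W'$ is a bona fide element of $B^\alpha_{p,\infty}(0,\pi)$ and the probe is Borel in that space). Variants of fractional Brownian motion or fractional integrals of white noise are expected to work and are well studied, but the verification of SLND together with the precise Besov regularity on $(0,\pi)$ is the one technical point to nail down; once $W$ is in hand, the transfer of irregularity through the derivative is the purely algebraic chain displayed above.
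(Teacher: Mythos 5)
Your proof follows essentially the same route as the paper's, and the algebraic chain
$\Lambda(\beta,p,k,X+f_0)\geq\Lambda(\beta,1,k,W'+f_0)\geq\Lambda(\beta+1,p,k+1,W+F_0)>0$
using \cref{th:SLND->Wei} together with the monotonicity and differentiation estimates of \cref{l:lambda_basic} is exactly the mechanism the paper uses. However, you have left two points open that the paper does resolve and that are genuinely part of the argument.

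First, the "main obstacle" you flag --- constructing a Gaussian process $W$ that is $H$-SLND with $H\in(\alpha+1,\beta+1)$ and has paths in $B^{\alpha+1}_{p,\infty}(0,\pi)$ --- is dissolved, not postponed, in the paper by \emph{not} trying to build such a $W$ from scratch. They take $H\in(\alpha,\beta)\subset(0,1)$, let $f$ be ordinary fBm of Hurst $H$ (paths in $C^{H-\varepsilon}$, hence law tight on $B^\alpha_{p,\infty}$, no boundary subtleties), and use that the \emph{primitive} $\int f$, as well as $\int f+\int\phi$ for any deterministic $\phi$, is automatically $(H+1)$-SLND because the conditional variances are unchanged by adding a deterministic drift and shift by one in the exponent comes for free from integration. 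Your $W$ is precisely their $\int f$ and your $X=W'$ is their $f$, so your probe measure is their $\mu^H$; the point is that the SLND process one needs at regularity exponent greater than $1$ is obtained by integrating a classical fBm, which is what you should invoke rather than gesture at "variants of fBm or fractional integrals of white noise."

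Second, you do not verify that the set in the statement is Borel in $B^\alpha_{p,\infty}(0,\pi)$, which is needed to make the prevalence statement meaningful. The paper writes it as
$\bigcap_m\bigcup_n\{f:\Lambda(\alpha+\tfrac1m,p,k,f)\geq\tfrac1n\}$
and observes that each inner set is closed because $f\mapsto\Lambda(\beta,p,k,f)$ is upper semicontinuous in $L^p$, so the set is Borel ($F_{\sigma\delta}$). This is a short but necessary step that should not be omitted.
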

\begin{proof}
    First of all,
    \begin{multline*}
        \{f\in B^\alpha_{p,\infty}: \Lambda(\beta,p,k,f)>0\text{ for all }\beta>\alpha\} \\
        =
        \bigcap_{m = 1}^\infty \bigcup_{n=1}^\infty \mathcal{B}_{m,n}
        \coloneqq
        \bigcap_{m = 1}^\infty \bigcup_{n=1}^\infty \left\{ f\in B_{p,\infty}^\alpha : \Lambda(\alpha + \frac1m, p, k, f) \geq \frac1n \right\}
    \end{multline*}
    is Borel measurable, since all $\mathcal{B}_{m,n}$ are closed in $B_{p,\infty}^\alpha$ due to $f\mapsto\Lambda(\beta,p,k,f)$ being upper semicontinuous in the $L^p(0,\pi)$ topology.

    We now just have to show that $\bigcup_{n=1}^\infty \mathcal{B}_{m,n}$ is prevalent for all $n$, since countable intersection of prevalent sets is itself prevalent.
    To this aim, consider some $\beta>\alpha$ and fix $H\in(\alpha,\beta)$.
    Let then $\{f_y\}_{y\in[0,\pi]}$ be the canonical fractional Brownian motion of Hurst parameter $H$ on $C(0,\pi)$ and $\mu^H$ be its law, which is tight on $B_{p,\infty}^\alpha(0,\pi)$ (cf. e.g. \cite[Section 2.1]{GalGub2023}).

    Because $\{f_y\}_{y\in[0,\pi]}$ is $H$-SLND, any primitive $\int f$ of it is $(H+1)$-SLND and so is any process of the form $\int f +\int \phi$, $\phi\in B_{p,\infty}^\alpha$.
    By means of \cref{th:SLND->Wei}, it follows that
    \begin{multline*}
        \mu^H (\phi + \{ f\in B_{p,\infty}^\alpha : \Lambda(\beta+1, p, h, \int f) \geq 0 \}) \\
        =
        \mu^H (\{ f\in B_{p,\infty}^\alpha : \Lambda(\beta+1, p, h, \int f + \int \phi) \geq 0 \}) = 1 ,
    \end{multline*}
    for any $\phi\in B_{p,\infty}^\alpha$, hence the set $\left\{ f\in B_{p,\infty}^\alpha : \Lambda(\beta+1, p, h, \int f) \geq 0 \right\}$ is prevalent in $B_{p,\infty}^\alpha$ for $h\geq1$.
    We can now conclude noting that, by \cref{l:lambda_basic},
    \[
        \left\{ f\in B_{p,\infty}^\alpha : \Lambda(\beta+1, p, h, \int f) \geq 0 \right\}
        \subseteq
        \left\{ f\in B_{p,\infty}^\alpha : \Lambda(\beta, p, h-1, f) \geq 0 \right\}
    \]
    hence the latter is also prevalent for all $\beta>\alpha$ and $h\geq 1$.
\end{proof}

It is important to notice here that the results leading to \cref{th:prevgeneral}, as well as those in \cref{sec:Wei}, hold for elements with a finite domain ($[0,\pi]$, without loss of generality).
In order to retrieve them on the whole Torus, we must resort to a solution of the type \cite[Corollary 4.19]{GalGub2023}, which extends the proof to $\Tb$.
In other words, instead of considering $\mu^H$, the law of a fractional Brownian motion on $[0,\pi]$, as the measure witnessing the prevalence of $\bigcup_{n=1}^\infty \mathcal{B}_{m,n}$, we take $\tilde\mu^H = T_\#\mu$, where $(Tf)(y) = f(|y|)$.
$\mu^H$ is tight on $B_{p,\infty}^\alpha(0,\pi)\cap L^\infty(0,\pi)$, hence $\tilde\mu^H$ will be tight on $B_{p,\infty}^\alpha(\Tb)$.
This, combined with the fact that $T$ lets any $f:\Tb\to\R$ split into $f_1,f_2:[0,\pi]\to\R$, accounting respectively for negative and positive values, allows to extend \cref{th:prevgeneral} to the whole Torus.

%%%%%%%%%%%%%%%%%%%%
%%%%%%%%%%%%%%%%%%%%
%%%%%%%%%%%%%%%%%%%%

\subsection{Abstract result and conclusion}\label{subsec:abstractresult}

In this subsection we will again be working work on $\Tb$, avoiding to specify it every time for the sake of notation.

\begin{proposition}\label{prop:abstract}
    If $E$, $F$ are complete metric vector spaces, if $T:E\to F$ is linear, continuous, and one-to-one, and if $A$ is prevalent in $E$, then $T(A)$ is prevalent in $F$.
\end{proposition}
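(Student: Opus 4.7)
The plan is to push forward the transverse measure witnessing prevalence of $A$. Recall that prevalence of $A\subset E$ means there exists a compactly supported Borel probability measure $\mu$ on $E$ (the transverse measure) with $\mu(x+A)=1$ for every $x\in E$. I propose $\nu\coloneqq T_\#\mu$, the pushforward of $\mu$ through $T$, as the witnessing measure for $T(A)$ in $F$. Continuity of $T$ immediately makes $\nu$ a Borel probability measure on $F$ whose support is contained in the compact image $T(\operatorname{supp}\mu)$, so $\nu$ is compactly supported; Borel measurability of $T(A)$ itself is not automatic, but it can be secured by the standard observation that $T$ restricted to any compact subset of $E$ is a homeomorphism onto its image.

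The heart of the argument is the translation-invariance condition: for every $z\in F$ one needs $\nu(z+T(A))=1$, which by the definition of pushforward rewrites as $\mu(T^{-1}(z+T(A)))=1$. Here I would exploit the linearity and injectivity of $T$ in tandem to reduce this preimage to a translate of $A$ in $E$. Indeed, when $z$ lies in the range of $T$, say $z=Tx_0$, linearity gives $Tx_0+T(A)=T(x_0+A)$ and injectivity then gives $T^{-1}(T(x_0+A))=x_0+A$, so
\[
    \mu(T^{-1}(z+T(A)))
    = \mu(x_0+A)
    = 1
\]
by the assumed prevalence of $A$. Injectivity is essential at this step: without it the preimage would strictly contain $x_0+A$ and the clean identification would fail.

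The main obstacle, and the step that requires care, is handling the case $z\in F\setminus T(E)$: since $T(E)$ is a linear subspace of $F$ and $\operatorname{supp}\nu\subseteq T(E)$, for such $z$ the set $z+T(A)\subseteq z+T(E)$ is disjoint from $\operatorname{supp}\nu$ and the naive pushforward delivers $\nu(z+T(A))=0$. The resolution comes either from reading prevalence in $F$ relative to the range $T(E)$, or from an additional hypothesis on $T$ — most naturally surjectivity, in which case the open mapping theorem (valid in complete metric vector spaces) upgrades $T$ to a linear homeomorphism and the previous paragraph closes the argument on the whole of $F$. In the paper's intended application, an even-extension type map between Besov spaces on the half-interval and on the torus, the explicit splitting of a function on $\Tb$ into its restrictions to the two halves makes $T$ effectively a linear isomorphism, so this subtlety does not obstruct the use of the proposition.
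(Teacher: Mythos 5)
Your proof takes a genuinely different, though dual, route: you work directly with a transverse measure assigning full mass to every translate of $A$, whereas the paper works with the complementary formulation through shy sets (it shows that if $S$ is shy in $E$ then $T(S)$ is shy in $F$, with $\nu = T_\#\mu$, and then applies this to $S = A^c$). The two formulations are equivalent for Borel sets, but the obstruction you flag surfaces in a more visible place in yours: in the prevalence form one needs $\nu\bigl(z + T(A)\bigr) = 1$ for \emph{every} $z\in F$, and for $z \notin T(E)$ this fails outright since $\nu$ charges only $T(E)$. In the paper's shy form the requirement is $\nu\bigl(y + T(S)\bigr) = 0$, which is vacuously true when $y\notin T(E)$, so that side of the dichotomy looks innocuous — and this is exactly why the flaw is easier to overlook there.

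More importantly, you have correctly identified a genuine gap in the statement as written. Without surjectivity the proposition is false: take $E=\R$, $F=\R^2$, $Tx=(x,0)$, $A=E$; then $A$ is prevalent in $E$ but $T(A)=\R\times\{0\}$ is a proper closed subspace, hence shy, not prevalent, in $F$. The paper's proof in fact silently uses surjectivity twice — once explicitly in the passage ``Since $T$ is injective, surjective and linear, $T^{-1}(y+T(S))=T^{-1}y+S$'' (which, as noted, is actually harmless without surjectivity thanks to the $\nu(\emptyset)=0$ case), and once essentially in the final line ``$T(A)^c = T(A^c)$'', which requires $T(E)=F$. Your proposed fix, adding surjectivity to the hypotheses and then invoking the open mapping theorem to upgrade $T$ to a linear homeomorphism, is the right one, and it is consistent with what the paper actually proves and with how the proposition is used later (the maps $T$ in \cref{th:final} and the Corollary preceding it are bijections between the relevant Besov spaces). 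The only minor slip is your reference to the ``even-extension'' map as the intended application: that map appears in the tightness discussion after \cref{th:prevgeneral}, while \cref{prop:abstract} is applied in \cref{th:final} to the derivative and the elliptic solution operator, but both are indeed linear isomorphisms, so your point stands.
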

\begin{proof}
    Let $S\subset E$ be shy: there are a compact set $K\subset E$ and a measure $\mu$ on $E$ such that
    $\mu(K)\in(0,\infty)$
    and
    $\mu(x+S)=0$ for all $x\in E$.
    Let us prove that $T(S)$ is shy in $F$.

    Consider the measure $\nu=T_\#\mu$ on $F$, and the set $K'=T(K)$.
    \begin{enumerate}[label=\roman*)]
        \item Since $T$ is continuous, it follows that $K'$ is compact.
        \item Since $T$ is injective, $T^{-1}T(K)=K$, therefore $\nu(K')=\mu(K)\in(0,\infty)$.
        \item Since $T$ is injective, surjective and linear, $T^{-1}(y+T(S))=T^{-1}y + T^{-1}T(S)=T^{-1}y + S$, therefore for every $y\in F$,
              \[
                  \nu(y+T(S))
                  = \mu(T^{-1}(y+T(S)))
                  = \mu(T^{-1}y + S)
                  = 0.
              \]
    \end{enumerate}
    Finally, since $T$ is one-to-one, $A^c$ is shy, $T(A)^c=T(A^c)$ is shy, and thus $T(A)$ is prevalent.
\end{proof}

\begin{corollary}
    Let $\alpha\in\R$, $1\leq p<\infty$, $k\in\N$ such that $\alpha+k>0$, and let $T:B^\alpha_{p,\infty}\to B^{\alpha+k}_{p,\infty}$ linear, continuous and one-to-one.
    Then
    \[
        \{f\in B^\alpha_{p,\infty}:\ \Lambda(\beta+k,p,k,Tf)>0\text{ for all }\beta>\alpha\}
    \]
    is prevalent in $B^\alpha_{p,\infty}$.
\end{corollary}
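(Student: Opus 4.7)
The strategy is to combine \cref{th:prevgeneral} at the positive regularity level $\alpha+k$ with a pullback of prevalence through $T$. Setting $E=B^\alpha_{p,\infty}$ and $F=B^{\alpha+k}_{p,\infty}$, one first applies \cref{th:prevgeneral} with parameters $(\alpha+k,p,k)$, which is legal precisely because $\alpha+k>0$: the set
\[
A \coloneqq \{g\in F: \Lambda(\beta+k,p,k,g)>0 \text{ for all } \beta>\alpha\}
\]
is prevalent in $F$. The target set of the corollary is exactly $T^{-1}(A)\subset E$, so everything reduces to transferring the prevalence of $A$ in $F$ to prevalence of $T^{-1}(A)$ in $E$.

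The pullback rests on the following elementary identity: since $T$ is linear and injective, $f+T^{-1}(A^c)=T^{-1}(Tf+A^c)$ for every $f\in E$, hence for any Borel probability measure $\mu$ on $E$
\[
\mu\bigl(f+T^{-1}(A^c)\bigr)=(T_{*}\mu)\bigl(Tf+A^c\bigr).
\]
Therefore $T^{-1}(A)$ is prevalent in $E$ as soon as one exhibits a compactly supported probability measure $\mu$ on $E$ whose pushforward $T_{*}\mu$ annihilates $y+A^c$ for every $y\in T(E)$ --- a weaker requirement than $T_{*}\mu$ itself being a prevalence witness for $A$ on all of $F$. Note also that the compact support of $\mu$ is automatically transported to a compact support of $T_{*}\mu$ since $T$ is continuous.

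To construct such a $\mu$, I would mirror the argument in the proof of \cref{th:prevgeneral}: take $X$ a Gaussian process in $E$ (for instance a suitable primitive of a fractional Brownian motion with Hurst parameter tuned so that its law $\mu$ is tight in $E$) chosen so that the image process $TX$ in $F$ is $H$-strongly locally nondeterministic for some $H>\alpha+k$. Then \cref{th:SLND->Wei} immediately gives $TX+\psi\in A$ almost surely for any deterministic $\psi\in F$, and in particular for $\psi=Tf$ with $f\in E$, which closes the argument. The main obstacle is precisely this SLND transfer: a generic continuous linear injection can distort the local conditional-variance structure of a Gaussian process, so proving $\operatorname{Var}(TX_t\mid\mathcal F_s)\gtrsim|t-s|^{2H}$ requires exploiting the specific form of $T$ --- either by directly choosing a Gaussian model on $E$ adapted to $T$, or by lifting the FBM witness of \cref{th:prevgeneral} back through $T^{-1}$ on its (dense) image in $F$, and checking that the resulting measure on $E$ is still tight and compactly supportable.
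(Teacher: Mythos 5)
Your setup is correct up to a point: you correctly identify that \cref{th:prevgeneral} at level $\alpha+k$ gives prevalence of the set $A=\{g\in B^{\alpha+k}_{p,\infty}:\Lambda(\beta+k,p,k,g)>0\ \forall\beta>\alpha\}$, and that the target set is exactly $T^{-1}(A)$. The identity $f+T^{-1}(A^c)=T^{-1}(Tf+A^c)$ is also right. But from there you take a detour that both fails to close and that would be strictly weaker than the statement you are proving.

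The gap is the one you flag yourself: you have no construction of a Gaussian $X$ in $E=B^\alpha_{p,\infty}$ whose image $TX$ is strongly locally nondeterministic in $F=B^{\alpha+k}_{p,\infty}$. This is not a technicality that can be patched over --- for a \emph{generic} continuous linear injection $T$ between Besov spaces, SLND of $TX$ genuinely fails, since $T$ need not preserve any localisation or conditional-variance structure. So an argument that insists on exhibiting an SLND process in the image of $T$ cannot yield the corollary at the stated generality; at best it would produce the result for the specific $T=\partial_y\circ S$ used later (which does preserve SLND), but then you would not be proving the corollary as written.

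The paper's route avoids the issue entirely: since $A$ is prevalent in $F$, and $T^{-1}:F\to E$ is linear, continuous, one-to-one, \cref{prop:abstract} applied to $T^{-1}$ immediately gives that $T^{-1}(A)$ is prevalent in $E$. No SLND claim about $TX$ is needed because one transports the \emph{already-constructed} witness measure on $F$ (coming from \cref{th:prevgeneral}) backwards through $T^{-1}$, rather than trying to build a new one on $E$ and push it forward. Your identity $\mu(f+T^{-1}(A^c))=(T_*\mu)(Tf+A^c)$ is precisely the algebraic core of the proof of \cref{prop:abstract}; you had the key observation in hand and should have recognized that the lemma was already there to invoke. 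Your last sentence (``lifting the FBM witness back through $T^{-1}$'') is in fact the correct move, but it is left as a vague alternative rather than being carried out, and you do not notice that it is literally \cref{prop:abstract}.
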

\begin{proof}
    If $\beta>\alpha$,
    \[
        T(\{f\in B^\alpha_{p,\infty}:\ \Lambda(\beta+k,p,k,Tf)>0\})
        = \{F\in B^{\alpha+k}_{p,\infty}:\ \Lambda(\beta+k,p,k,F)>0\}
    \]
    and the set on the right-hand side is prevalent in $B^{\alpha+k}_{p,\infty}$.
    The conclusion follows by the previous proposition applied to the map $T^{-1}$.
\end{proof}

\begin{theorem}\label{th:final}
    Almost every velocity in $B^\alpha_{1,\infty}$, $\alpha\in(-\frac12, 0)$ is diffusion enhancing.
\end{theorem}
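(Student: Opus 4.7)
The plan is to combine \cref{prop:upbound} (Wei's condition implies enhanced dissipation) with \cref{th:prevgeneral} (Wei's condition is prevalent in positive-regularity Besov spaces), bridging the two via the elliptic regularization map $T\colon u\mapsto\partial_y U$, where $U$ is the mean-zero periodic solution of \cref{eq:itotrickII}. Since $-\partial_y^2 U=u-\bar u$ gains exactly one derivative of Besov regularity, on the subspace of mean-zero elements $T$ is linear, continuous, and one-to-one from $B^\alpha_{1,\infty}(\Tb)$ into $B^{\alpha+1}_{1,\infty}(\Tb)$, which places us squarely in the framework of \cref{prop:abstract} and its corollary.

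The first technical step is to apply \cref{th:prevgeneral} to the target space $B^{\alpha+1}_{1,\infty}$ with parameters $p=1$, $k=1$; the regularity hypothesis is met because $\alpha+1\in(1/2,1)$ is positive, and the extension from the interval to $\Tb$ is the symmetrization trick described after the proof of that theorem. This produces a prevalent subset of $B^{\alpha+1}_{1,\infty}$ on which $\Lambda(\gamma,1,1,\cdot)>0$ for every $\gamma>\alpha+1$. The corollary of \cref{prop:abstract}, applied to $T$, transports prevalence back to $B^\alpha_{1,\infty}$ and yields a prevalent set $\mathcal{P}$ of velocities $u$ for which $\Lambda(\gamma,1,1,\partial_y U)>0$ for every $\gamma>\alpha+1$. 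A single appeal to the monotonicity of $\Lambda$ in $p$ (\cref{l:lambda_basic}), together with the embedding $B^{\alpha+1}_{1,\infty}\hookrightarrow L^2$ (valid since $\alpha+1>1/2$), upgrades this to $\Lambda(\gamma,1,2,\partial_y U)>0$ for every $\gamma>\alpha+1$ and every $u\in\mathcal{P}$.

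The final step is to invoke \cref{prop:upbound} for each $u\in\mathcal{P}$, and this is precisely where I expect the main technical obstacle. Prevalence only supplies positivity of $\Lambda$ at Wei exponents $\gamma$ \emph{strictly greater} than $\alpha+1$, whereas \cref{prop:upbound} is stated at exactly $\gamma=\alpha+1$, and the monotonicity of $\Lambda$ in its first argument goes the wrong way to close this gap for free. I would handle this by inspecting the proof of \cref{prop:upbound}: replacing the core bound $\omega_1(\delta,u)\gtrsim\delta^{2\alpha+3}\Lambda(\alpha+1,1,2,\partial_y U)^2$ with $\omega_1(\delta,u)\gtrsim\delta^{2\gamma+1}\Lambda(\gamma,1,2,\partial_y U)^2$ for $\gamma$ slightly above $\alpha+1$, and reoptimizing the scale $\delta=\nu^{1/(\gamma+1)}$ in \cref{lemma:b1}, still produces a strictly positive (in fact diverging) dissipation rate, which is all that \cref{def:enhdiss} requires. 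Since \cref{th:final} asserts only the existence of some diffusion-enhancing rate rather than a sharp one, this robustness of the upper-bound argument is enough to conclude.
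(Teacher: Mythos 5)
Your proposal is correct and follows essentially the same architecture as the paper's proof: transport the prevalence statement of \cref{th:prevgeneral} from a positive-regularity Besov space back to $B^\alpha_{1,\infty}$ via the linear, continuous, injective map $T\colon u\mapsto\partial_y U$, then invoke \cref{prop:upbound}. The only structural difference is that the paper applies \cref{th:prevgeneral} at level $\alpha+2$ with $k=2$, $p=2$ on the zero-mean subspace and then differentiates once through $\partial_y\colon\mathring B^{\alpha+2}_{1,\infty}\to\mathring B^{\alpha+1}_{1,\infty}$ before composing with $T$, whereas you land directly on $B^{\alpha+1}_{1,\infty}$ with $k=1$, $p=1$ and upgrade the integrability index afterwards using the $p$-monotonicity of $\Lambda$ from \cref{l:lambda_basic} together with $B^{\alpha+1}_{1,\infty}\hookrightarrow L^2$; both routes are sound.

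The most valuable part of your write-up is the last paragraph. \cref{th:prevgeneral} only yields $\Lambda(\gamma,1,2,\partial_y U)>0$ for $\gamma$ \emph{strictly} larger than $\alpha+1$, while \cref{prop:upbound} is stated at the endpoint $\gamma=\alpha+1$, and since $\Lambda$ is non-decreasing in its first argument the implication goes the wrong way. Your fix --- re-running \cref{lemma:b1} with $\omega_1(\delta,u)\gtrsim\delta^{2\beta+3}\Lambda(\beta+1,1,2,\partial_y U)^2$ for some $\beta\in(\alpha,0)$, taking $\delta=\nu^{1/(\beta+2)}$, and accepting the slightly worse but still diverging rate $r(\nu)\gtrsim\nu^{\beta/(\beta+2)}$ --- is exactly right, and is precisely what the ``$\forall\bar\alpha<\alpha$'' in \cref{eq:mainupper} is meant to encode. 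The paper's own proof of \cref{th:final} passes over this silently (it writes $\Lambda(\alpha+1,\dots)$ in the final displayed set even though the prevalent sets it constructs involve $\Lambda(\beta+1,\dots)$ for $\beta>\alpha$), so your explicit treatment is, if anything, more careful than the source.
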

\begin{proof}
    In order to prove the theorem, we show that almost every element of $B^\alpha_{1,\infty}$ satisfy the sufficient condition \eqref{eq:ourWei}.
    To this aim, notice that for $\gamma>0$ it is not hard to prove a version of \cref{th:prevgeneral} on the space $\mathring{B}_{1,\infty}^\gamma$ (which denotes, with a slight abuse of notation, the zero-mean subset of $B_{1,\infty}^\gamma$); in particular, given $\alpha\in(-\frac12, 0)$, the set $\{f\in\mathring{B}_{1,\infty}^{\alpha+2}: \Lambda(\beta+2, 2, 2, f)>0\}$ is prevalent in $\mathring{B}_{1,\infty}^{\alpha+2}$ for all $\beta>\alpha$.

    Moreover, since $\partial_y: \mathring{B}_{1,\infty}^{\alpha+2} \rightarrow \mathring{B}_{1,\infty}^{\alpha+1}$ satisfies the hypothesis of \cref{prop:abstract}, the set
    \[
        \{f\in\mathring{B}_{1,\infty}^{\alpha+1}: \Lambda(\beta+1, 1, 2, f)>0\} \supseteq \{f\in\mathring{B}_{1,\infty}^{\alpha+2}: \Lambda(\beta+2, 2, 2, \partial_y^{-1}f)>0\}
    \]
    is prevalent in $\mathring{B}_{1,\infty}^{\alpha+1}$. The inclusion in guaranteed by \cref{l:lambda_basic}.

    We conclude observing that the solution operator $S$ of the well-posed problem \eqref{eq:itotrickII} also satisfies the hypothesis of \cref{prop:abstract}.
    Hence, given the operator $T:B_{1,\infty}^\alpha\rightarrow\mathring{B}_{1,\infty}^{\alpha+1}$, defined as $Tu = \partial_y (Su) = \partial_y U$ with $U$ solution of \eqref{eq:itotrickII}, we have that the set
    \[
        T^{-1} \{f\in\mathring{B}_{1,\infty}^{\alpha+1}: \Lambda(\alpha+1, 1, 2, f)>0\} = \{u\in\mathring{B}_{1,\infty}^{\alpha}: \Lambda(\alpha+1, 1, 2, \partial_y U) >0\}
    \]
    is prevalent in $B_{1,\infty}^\alpha$.
\end{proof}

%%%%%%%%%%%%%%%%%%%%%%%%%%%%%%%%%%%%%%%%
%%%%%%%%%%%%%%%%%%%%%%%%%%%%%%%%%%%%%%%%
%%%%%%%%%%%%%%%%%%%%%%%%%%%%%%%%%%%%%%%%

\appendix

\section{Well-posedness in the negative regularity regime}
\label{app:wellposedness}

We prove here that equation \cref{eq:hypoelliptic} is well posed whenever the velocity term $u$ belongs to a Besov space $B_{1,\infty}^\alpha$, provided that $\alpha\in(-\frac{1}{2},0)$.

\begin{theorem}\label{th:wp}
    Let $f_0\in L^2(\Tb)$ and $u\in B_{1,\infty}^\alpha$, $-1/2 < \alpha < 0$.
    Then the equation
    \begin{equation}\label{eq:problem}
        \partial_t f + \imath u f = \nu \partial^2_y f
    \end{equation}
    admits a unique weak solution.
    Moreover, such solution belongs to $L^2(0,T;H^1(\Tb)) \cap L^\infty(0,T;L^2(\Tb))$.
\end{theorem}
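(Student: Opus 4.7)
The plan is to give meaning to the distributional product $uf$ via a duality argument, then construct a weak solution by smoothing $u$, extracting uniform energy estimates, and passing to the limit; uniqueness will follow from the same energy computation.

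First I would use the Besov-Sobolev embedding $B^\alpha_{1,\infty}(\Tb)\hookrightarrow B^{\alpha-1/2}_{2,\infty}(\Tb)\hookrightarrow H^{\alpha-1/2-\epsilon}(\Tb)$ to place $u\in H^{-1+\delta}(\Tb)$ for some $\delta=\delta(\alpha)>0$; this is exactly the point where the hypothesis $\alpha>-\tfrac12$ enters. Since the torus is one-dimensional, $H^1(\Tb)$ is a Banach algebra (via $H^1\hookrightarrow L^\infty$), so $f\phi\in H^1\subset H^{1-\delta}$ whenever $f,\phi\in H^1$. I then define
\[
    \langle uf,\phi\rangle_{H^{-1},H^1}
    \vcentcolon=
    \langle u,f\phi\rangle_{H^{-1+\delta},H^{1-\delta}},
    \qquad\phi\in H^1,
\]
and obtain the bilinear estimate $\norm{uf}{H^{-1}}\lesssim\norm{u}{H^{-1+\delta}}\norm{f}{H^1}$. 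A weak solution is then $f\in L^\infty(0,T;L^2)\cap L^2(0,T;H^1)$ satisfying the equation tested against arbitrary $\phi\in H^1$ in this dual sense.

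For existence I would mollify $u$ into smooth $u_n\to u$ in $H^{-1+\delta/2}$. Classical linear parabolic theory provides smooth solutions $f_n$; testing against $\bar f_n$, taking real parts, and using that $u_n$ is real-valued (so that $\mathrm{Re}(i\int_\Tb u_n|f_n|^2\,dy)=0$) yields the uniform energy identity
\[
    \norm{f_n(t)}{L^2}^2 + 2\nu\int_0^t\norm{\partial_y f_n(s)}{L^2}^2\,ds
    = \norm{f_0}{L^2}^2.
\]
Combined with the previous bound on $u_n f_n$, this controls $\partial_t f_n$ uniformly in $L^2(0,T;H^{-1})$. An Aubin-Lions argument then yields, along a subsequence, $f_n\to f$ strongly in $L^2(0,T;L^2)$, and interpolation with the weak $L^2(0,T;H^1)$ limit upgrades this to strong convergence in $L^2(0,T;H^{1-\delta/2})$. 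The $H^1$ algebra estimate for $f_n\phi$, together with $u_n\to u$ in $H^{-1+\delta/2}$, is enough to pass to the limit in the duality pairing and recover $f$ as a weak solution.

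Uniqueness follows from the same energy computation applied to the difference $w=f_1-f_2$ of two solutions: since $u$ is real, $\langle u,|w|^2\rangle$ is a real number, so its $i$-multiple has zero real part and one obtains $\norm{w(t)}{L^2}^2+2\nu\int_0^t\norm{\partial_y w(s)}{L^2}^2\,ds=0$, whence $w\equiv 0$. The main obstacle is exactly the first step: selecting function spaces so that the distributional product $uf$ is simultaneously meaningful, quantitatively controlled along the approximation, and stable when passing to the limit with only weak control on $u$. The restriction $\alpha>-\tfrac12$ is precisely what places $u$ strictly above $H^{-1}$, while the one-dimensional algebra property of $H^1$ is what makes the duality pairing for $uf$ close with a usable constant.
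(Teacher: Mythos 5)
Your proposal is correct and reaches the same conclusion, but it takes a genuinely different route from the paper. The paper also approximates $u$ by smooth functions, but it works in the $L^p$-based Sobolev scale (exploiting $B^\alpha_{1,\infty}\subset W^{\alpha-\epsilon,1+\epsilon}$ and $H^1\subset W^{-\alpha+\epsilon,(1+\epsilon)/\epsilon}=(W^{\alpha-\epsilon,1+\epsilon})'$) and, crucially, shows directly that the approximating solutions $\{f^n\}$ form a \emph{Cauchy sequence} in $L^\infty(0,T;L^2)\cap L^2(0,T;H^1)$: testing the difference equation against $f^n-f^m$, splitting into real and imaginary parts, and absorbing the transport term by duality gives $\norm{f^n-f^m}{L^\infty L^2}^2+2\nu\norm{\partial_y(f^n-f^m)}{L^2 L^2}^2\lesssim\nu^{-1}\norm{f_0}{L^2}^2\norm{u^n-u^m}{W^{\alpha-\epsilon,1+\epsilon}}\to0$. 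You instead place $u$ in the $L^2$-based negative Sobolev space $H^{-1+\delta}$ via $B^\alpha_{1,\infty}\hookrightarrow B^{\alpha-1/2}_{2,\infty}\hookrightarrow H^{\alpha-1/2-\epsilon}$, use the one-dimensional $H^1$ algebra property to close the duality pairing for $uf$, and then rely on uniform a~priori bounds together with Aubin--Lions compactness and interpolation to extract a strongly convergent subsequence. Both uses of the hypothesis $\alpha>-\tfrac12$ are the same in spirit (it lifts $u$ just above the $H^{-1}$ threshold so that duality against $H^1$-based quantities closes), but the paper's Cauchy argument is more economical, yielding convergence of the full sequence without any compactness lemma, whereas your compactness route is the more standard PDE machinery and spells out the weak formulation more explicitly (the paper leaves "solves the limiting equation" implicit). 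One small point worth being careful about in your write-up: the strong convergence $f_n\to f$ in $L^2(0,T;H^{1-\delta/2})$ combined with $u_n\to u$ in $H^{-1+\delta/2}$ lets you pass to the limit in $\langle u_n,f_n\phi\rangle$ only after invoking that $H^{1-\delta/2}(\Tb)$ is still an algebra (true since $1-\delta/2>1/2$), so that $\norm{(f_n-f)\phi}{H^{1-\delta/2}}\lesssim\norm{f_n-f}{H^{1-\delta/2}}\norm{\phi}{H^{1-\delta/2}}$; this is implicit but should be stated.
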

\begin{proof}
    Assume first that that $u$ is smooth.
    Then it makes sense to multiply the equation by $f$ and integrate over $\Tb$ to get
    \[
        \frac{1}{2}\partial_t \norm{f}{L^2}^2 + \nu \norm{\partial_y f}{L^2}^2 = 0 ,
    \]
    which yields the regularity estimates.
    Indeed, it suffices to integrate in time to observe that
    \[
        \sup_{t\in[0,T]} \norm{f_t}{L^2}^2 \leq \norm{f_0}{L^2}^2
        \quad \text{and} \quad
        \int_0^T \norm{\partial_y f_t}{L^2}^2 dt \leq \frac{1}{2\nu}\norm{f_0}{L^2}^2 .
    \]

    As for the existence, notice that on the $1$-dimensional torus one has the inclusions $ B_{1,\infty}^\alpha\subset W^{\alpha-\epsilon, 1+\epsilon}$ and $H^1 \subset W^{-\alpha+\epsilon, \frac{1+\epsilon}{\epsilon}} = (W^{\alpha-\epsilon, 1+\epsilon})'$.
    We can then choose a sequence of smooth functions $\{u^n\}_n$ converging to $u$ in $W^{\alpha-\epsilon, 1+\epsilon}$ and the solutions to the associated equations, $\{f^n\}_n \subset L^2(0,T;W^{-\alpha+\epsilon, \frac{1+\epsilon}{\epsilon}})$.
    By considering the difference between two of such equations, with $n \neq m$, and taking the duality pairing against $f^n-f^m$, we get
    \begin{multline*}
        \frac{1}{2}\partial_t \norm{f^n-f^m}{L^2}^2 + \imath \int_\Tb (u^n-u^m) f^n \overline{(f^n-f^m)} d y + \imath \int_\Tb u^m |f^n-f^m|^2 d y \\
        + \nu \norm{\partial_y (f^n-f^m)}{L^2}^2 = 0 .
    \end{multline*}
    Analysing the real and imaginary parts of the above separately, for the former we get the equality
    \[
        \frac{1}{2}\partial_t \norm{f^n-f^m}{L^2}^2 + \nu \norm{\partial_y (f^n-f^m)}{L^2}^2
        =
        \operatorname{Im} \int_\Tb (u^n-u^m) f^n \overline{f^m} d y;
    \]
    then, integration in time yields
    \begin{align*}
        \frac{1}{2} \norm{f^n_s-f^m_s}{L^2}^2 + \nu \int_0^t \|\partial_y (f^n_s -{} & f^m_s)\|_{L^2}^2 d s
        \leq
        \int_0^t \int_\Tb |(u^n-u^m) f^n_s \overline{f^m_s}| d y d s                                                                                 \\
        \leq{}                                                                       &
        \int_0^t \norm{f^m_s}{L^\infty} \norm{u^n-u^m}{W^{\alpha-\epsilon,1+\epsilon}} \norm{f^n_s}{W^{-\alpha+\epsilon, (1+\epsilon)/\epsilon}} d s \\
        \leq{}                                                                       &
        \norm{f^m}{L^2 H^1} \norm{f^n}{L^2 H^1} \norm{u^n-u^m}{W^{\alpha-\epsilon,1+\epsilon}}
    \end{align*}
    which eventually gives the bound
    \begin{align*}
        \norm{f^n-f^m}{L^\infty L^2}^2 + 2\nu \int_0^t \|\partial_y (f^n_s - f^m_s)\|_{L^2}^2 d s
        \leq \frac{1}{\nu} \norm{f_0}{L^2}^2 \norm{u^n-u^m}{W^{\alpha-\epsilon,1+\epsilon}}
        \rightarrow 0 .
    \end{align*}
    Since a similar approach can be used for the imaginary part, we conclude that $\{f^n\}_n$ is a Cauchy sequence in $L^\infty L^2 \cap L^2 H^1$ and it converges to the solution $f$ of the limiting equation \eqref{eq:problem}.

    Uniqueness of solutions is obtained by contradiction, once one computes the difference between two solutions $f$ and $\tilde{f}$ for the same $u$.
\end{proof}

\begin{remark}
    Notice also that it can be easily proved that $f\in L^2 H^1\cap W^{1,2}H^{-1}$, hence the regularity of $f$ is in fact $C(0,T;L^2)\cap L^2(0,T;H^1)$ (see e.g. \cite[Lemma 2.1]{Bessaih1999}).
\end{remark}

\section{A parabolic estimate}
\label{app:parabolic}

While proving Lemma \ref{lemma:continuity}, we came across a bound of the form
\[
    \norm{g_0}{L^2}^2 - \norm{g_t}{L^2}^2
    \leq
    \norm{g_0}{H^1}^2 \left( \nu t + 4 |\xi| \left( \int_0^t \norm{\partial_y U(s,\cdot)}{L^2}^2 d s \right)^{\frac{1}{2}} \right) ,
\]
where $U$ is a solution of the problem
\begin{equation}\label{eq:app}
    \begin{cases}
        \partial_t U - \frac\nu2\partial_y^2 U = u - \bar u, \\
        U(0) = 0 .
    \end{cases}
\end{equation}

In order to close the proof, we then need to estimate $\int_0^t \norm{\partial_y U(s,\cdot)}{L^2}^2 d s$ in terms of the $W^{\alpha-\epsilon, 1+\epsilon}$-norm of $u$, in order to approximate it with smooth functions.

\begin{lemma}
    Let $u\in B_{1,\infty}^\alpha$, $\alpha\in(-\frac12,0)$.
    Then, if $U$ is the corresponding solution to \eqref{eq:app}, it holds
    \[
        \int_0^t \norm{\partial_y U(r,\cdot)}{L^2}^2 ds
        \leq
        C t^{\frac{1}{1+\epsilon}(\frac32+\alpha+\epsilon(\frac32+\alpha-\epsilon))} \nu^{-\frac{1}{1+\epsilon}(\frac32-\alpha+\epsilon(\frac32-\alpha+\epsilon))} \norm{u}{W^{\alpha-\epsilon,1+\epsilon}}^2
    \]
    for any $\epsilon>0$ suitably small and some constant $C>0$ only depending on $\alpha$ and $\epsilon$.
\end{lemma}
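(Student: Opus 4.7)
The plan is to exploit the Duhamel representation of the solution to \eqref{eq:app}, namely
\[
    U(s,\cdot) = \int_0^s e^{\frac{\nu}{2}(s-r)\partial_y^2}(u-\bar u)\,dr,
\]
and pass to Fourier variables on $\Tb$. Writing $u - \bar u = \sum_{k\neq 0} c_k e^{\imath k y}$, one computes
\[
    \widehat{\partial_y U}(s,k) = \frac{2\imath c_k}{\nu k}\bigl(1 - e^{-\frac{\nu}{2}sk^2}\bigr),
\]
so that by Parseval
\[
    \int_0^t \norm{\partial_y U(s,\cdot)}{L^2}^2\,ds
    \lesssim \sum_{k\neq 0} \frac{|c_k|^2}{\nu^2 k^2}\int_0^t \bigl(1 - e^{-\frac{\nu}{2}sk^2}\bigr)^2\,ds.
\]
Everything that follows reduces to a careful pointwise-in-$k$ bound of the inner time integral.

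The key interpolation uses the elementary inequality $(1-e^{-x})^2 \leq \min(x^2,1) \leq x^{2\theta}$, valid for $x \geq 0$ and every $\theta \in [0,1]$. Choosing $\theta = \frac{\sigma+1}{2}$ with $\sigma \in [-1,1]$ and integrating yields
\[
    \int_0^t \bigl(1-e^{-\frac{\nu}{2}sk^2}\bigr)^2\,ds
    \lesssim \nu^{2\theta}\,k^{4\theta}\,t^{2\theta+1},
\]
whence
\[
    \int_0^t \norm{\partial_y U(s,\cdot)}{L^2}^2\,ds
    \lesssim \nu^{\sigma-1}\,t^{\sigma+2} \sum_{k\neq 0} |c_k|^2 |k|^{2\sigma}
    \sim \nu^{\sigma-1}\,t^{\sigma+2}\,\norm{u}{H^\sigma}^2.
\]
It then remains only to translate the $H^\sigma$ norm into the $W^{\alpha-\epsilon,1+\epsilon}$ norm. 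The one-dimensional Sobolev embedding $W^{\alpha-\epsilon,1+\epsilon}(\Tb) \hookrightarrow H^\sigma(\Tb)$ holds precisely for $\sigma = (\alpha-\epsilon) - \frac{1}{1+\epsilon} + \frac{1}{2}$, and a direct algebraic check shows that with this choice the pair $(\sigma-1,\sigma+2)$ equals
\[
    \Bigl(-\tfrac{1}{1+\epsilon}\bigl(\tfrac{3}{2}-\alpha+\epsilon(\tfrac{3}{2}-\alpha+\epsilon)\bigr),\ \tfrac{1}{1+\epsilon}\bigl(\tfrac{3}{2}+\alpha+\epsilon(\tfrac{3}{2}+\alpha-\epsilon)\bigr)\Bigr),
\]
matching the target exponents of $\nu$ and $t$.

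The main obstacle, such as it is, is keeping the interpolation parameter $\theta$ in $[0,1]$ and ensuring $\sigma \in (-1,0)$ so that both the Fourier sum converges and the Sobolev embedding produces a loss of derivatives consistent with the claimed exponents. Since $\alpha \in (-\tfrac12,0)$, one has $\sigma = \alpha - \tfrac{1}{2} + O(\epsilon)$, which lies in $(-1,0)$ for every sufficiently small $\epsilon > 0$; this fixes the admissible smallness condition on $\epsilon$ throughout the argument. The rest is bookkeeping of exponents.
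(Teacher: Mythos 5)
Your proof is correct, and it takes a genuinely different route from the paper's. The paper proceeds by an energy method: it tests the equation against $U$, estimates the right-hand side via the duality pairing $\int u U \leq \|u\|_{W^{\alpha-\epsilon,1+\epsilon}}\|U\|_{W^{-\alpha+\epsilon,(1+\epsilon)/\epsilon}}$, uses Gagliardo--Nirenberg interpolation to replace the $W^{-\alpha+\epsilon,(1+\epsilon)/\epsilon}$-norm by $\|U\|_{L^2}^{1-\theta}\|\partial_y U\|_{L^2}^\theta$, applies Young's inequality to absorb the gradient term, solves the resulting differential inequality for $\|U(t)\|_{L^2}$, and substitutes back. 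You instead diagonalize the heat semigroup on the torus, compute $\widehat{\partial_y U}(s,k)$ explicitly from the Duhamel formula, and replace all the functional-analytic interpolation by the scalar inequality $(1-e^{-x})^2 \leq x^{2\theta}$, leaving only the Sobolev embedding $W^{\alpha-\epsilon,1+\epsilon}(\Tb)\hookrightarrow H^\sigma(\Tb)$. I checked the bookkeeping: setting $\sigma = (\alpha-\epsilon) - \frac{1}{1+\epsilon} + \frac12$, one indeed finds $(1+\epsilon)(\sigma-1) = -\bigl(\tfrac32-\alpha+\epsilon(\tfrac32-\alpha+\epsilon)\bigr)$ and $(1+\epsilon)(\sigma+2) = \tfrac32+\alpha+\epsilon(\tfrac32+\alpha-\epsilon)$, and moreover your $\theta = \frac{\sigma+1}{2}$ is linked to the paper's $\theta_{\mathrm{paper}}$ by $\theta_{\mathrm{paper}} = -\sigma = \tfrac12-\alpha+\tfrac{\epsilon^2}{1+\epsilon}$, so the two arguments are tracking the same exponents via different interpolation parameters.

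What each approach buys: your Fourier argument is shorter, more elementary, and makes the origin of the exponents transparent --- they drop out of a single pointwise-in-$k$ interpolation of the damping factor. The paper's energy method is heavier on manipulation but does not use the explicit diagonalization of $\partial_y^2$ and would survive if the operator were replaced by a variable-coefficient elliptic operator or a more general geometry; it is the style of argument that the rest of the paper (the It\^o trick, the well-posedness appendix) relies on. One small clarification worth making in your write-up: the constraint that fixes the smallness of $\epsilon$ is really $\theta = \frac{\sigma+1}{2}\in[0,1]$ together with $1+\epsilon\leq 2$ for the Sobolev embedding; the "convergence of the Fourier sum" is then automatic, since $u\in B^\alpha_{1,\infty}\subset W^{\alpha-\epsilon,1+\epsilon}\subset H^\sigma$ forces $\sum|c_k|^2|k|^{2\sigma}<\infty$.
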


\begin{proof}
    First of all, we can test $\partial_t U - \frac{\nu}{2} \partial_y^2 U = u - \bar u$ against $U$ and get ($U$ has null mean)
    \[
        \frac12 \frac{d}{dt} \norm{U}{L^2}^2 + \frac{\nu}{2} \norm{\partial_y U }{L^2}^2
        =
        \int_\Tb u U .
    \]
    Such operation is legitimate: $u\in B^{\alpha}_{1,\infty}$ and, for all $\epsilon>0$, $B^{\alpha}_{1,\infty}\subset W^{\alpha-\epsilon,1+\epsilon}$, which is the dual of $W^{-\alpha+\epsilon, (1+\epsilon)/\epsilon}$.
    It is then not hard to check that the solution $U$ to \eqref{eq:app} belongs to $B^{\alpha+2}_{1,\infty}$, and that $B_{1,\infty}^{\alpha+2} \subset W^{-\alpha+\epsilon, (1+\epsilon)/\epsilon}$ for $\epsilon$ small enough, in particular such that
    $\epsilon < \alpha+\sqrt{\alpha^2+4\alpha+2}$.

    Now, by duality
    \[
        \frac12 \frac{d}{d t} \norm{U}{L^2}^2 + \frac{\nu}{2} \norm{\partial_y U}{L^2}^2
        =
        \int_\Tb u U
        \leq
        \norm{u}{W^{\alpha-\epsilon,1+\epsilon}} \norm{U}{W^{-\alpha+\epsilon, (1+\epsilon)/\epsilon}} ,
    \]
    and by interpolation (cf. \cite{nirenberg})
    \begin{equation}\label{eq:interp}
        \frac12 \frac{d}{d t} \norm{U}{L^2}^2 + \frac\nu2 \norm{\partial_y U}{L^2}^2
        \leq
        C_1 \norm{u}{W^{\alpha-\epsilon,1+\epsilon}} \norm{U}{L^2}^{1-\theta} \norm{\partial_y U}{L^2}^\theta ,
    \end{equation}
    where $C_1>0$ is a suitable constant,
    \[
        \theta \coloneqq \frac{1}{1+\epsilon} \left( \frac{1}{2} - \alpha + \bar{\epsilon} \right)
        \quad\text{and}\quad
        \bar\epsilon \coloneqq \epsilon\left( \frac{1}{2} - \alpha + \epsilon \right) .
    \]
    We then apply Young's inequality with $p=2/(2-\theta)$ and $q=2/\theta$:
    \begin{align*}
        \frac{d}{d t}  \norm{U}{L^2}^2 + \nu \norm{\partial_y U}{L^2}^2
         & \lesssim
        \left( \left(\frac{1}{q \nu}\right)^{\frac{1}{q}} 2 C_1 \norm{u}{W^{\alpha-\epsilon, 1+\epsilon}} \norm{U}{L^2}^{1-\theta} \right) \left( \left(q \nu\right)^{\frac{1}{q}} \norm{\partial_y U}{L^2}^{\theta} \right) \\
         & \leq
        \frac{1}{p} \left( \left(\frac{1}{q \nu}\right)^{\frac{1}{q}} 2 C_1 \norm{u}{W^{\alpha-\epsilon, 1+\epsilon}} \norm{U}{L^2}^{1-\theta} \right)^p + \nu \norm{\partial_y U}{L^2}^2                                    \\
         & =
        C_2 \nu^{-\frac{\theta}{2-\theta}} \norm{u}{W^{\alpha-\epsilon, 1+\epsilon}}^{\frac{2}{2-\theta}} \left( \norm{U}{L^2}^2 \right)^{\frac{1-\theta}{2-\theta}} + \nu \norm{\partial_y U}{L^2}^2 ,
    \end{align*}
    where $C_2 = (2-\theta) \left( 2 \theta \right)^{\frac{\theta}{2-\theta}} C_1^{\frac{2}{2-\theta}}$, yielding
    \[
        \frac{d}{d t}  \norm{U}{L^2}^2
        \lesssim
        C_2 \nu^{-\frac{\theta}{2-\theta}} \norm{u}{W^{\alpha-\epsilon, 1+\epsilon}}^{\frac{2}{2-\theta}} \left( \norm{U}{L^2}^2 \right)^{1-\frac{1}{2-\theta}}  .
    \]
    Therefore,
    \[
        \norm{U(t)}{L^2}
        \leq
        C_3 t^{1-\frac\theta2} \nu^{-\frac\theta2} \norm{u}{W^{\alpha-\epsilon, 1+\epsilon}} ,
    \]
    with $C_3 = \left( 2 \theta \right)^{\frac\theta2} C_1$.

    At this point we can integrate \eqref{eq:interp} in time and neglect the (positive) term $\norm{U(t)}{L^2}^2$ on the left-hand side:
    \begin{align*}
        \frac{\nu}{2} \int_0^t & \norm{\partial_y U(s)}{L^2}^2 ds
        \leq
        C_1 \norm{u}{W^{\alpha-\epsilon,1+\epsilon}} \int_0^t \norm{U(s)}{L^2}^{1-\theta} \norm{\partial_y U(s)}{L^2}^\theta d s                                                                                               \\
                               & \leq
        C_1 \norm{u}{W^{\alpha-\epsilon,1+\epsilon}} \left( \int_0^t \norm{U(s)}{L^2}^{\frac{2(1-\theta)}{2-\theta}} d s \right)^{\frac{2-\theta}2} \left( \int_0^t \norm{\partial_y U(s)}{L^2}^{2} d s \right)^{\frac\theta2} \\
                               & \leq
        C_4 t^{\frac{(2-\theta)^2}{2}} \nu^{-\frac\theta2(1-\theta)} \norm{u}{W^{\alpha-\epsilon,1+\epsilon}}^{2-\theta} \left( \int_0^t \norm{\partial_y U(s)}{L^2}^2 d s \right)^{\frac{\theta}{2}} ,
    \end{align*}
    where $C_4 = (2-\theta)^{-\frac{2-\theta}{2}} (2\theta)^{\frac\theta2(1-\theta)} C_1^{2-\theta}$.
    If we assume that $\partial_y U$ is not identically null on the interval $[0,t]$ we can now divide by the integral to get
    \begin{multline}\label{eq:gradineqparabolic}
        \int_0^t \norm{\partial_y U(s)}{L^2}^2 ds
        \leq
        C_5 t^{2-\theta} \nu^{-(1+\theta)} \norm{u}{W^{\alpha-\epsilon,1+\epsilon}}^2 \\
        =
        C_5 t^{\frac{1}{1+\epsilon}(\frac32+\alpha+\epsilon(\frac32+\alpha-\epsilon))} \nu^{-\frac{1}{1+\epsilon}(\frac32-\alpha+\epsilon(\frac32-\alpha+\epsilon))} \norm{u}{W^{\alpha-\epsilon,1+\epsilon}}^2 ,
    \end{multline}
    with $C_5 = (2-\theta)^{-1} 2^{\theta+1} \theta^{\frac{\theta(1-\theta)}{2-\theta}} C_1^2$ (we omit its expression in terms of $\epsilon$, for the sake of readability).
\end{proof}

In order to close the proof, we then need to estimate $\int_0^t \norm{\partial_y U(s,\cdot)}{L^2}^2 d s$ in terms of the $H^{-1}$-norm of $u$, in order to approximate it with smooth functions.

\begin{lemma}
    Let $u\in B_{1,\infty}^\alpha$, $\alpha\in(-\frac12,0)$. Then, if $U$ is the corresponding solution to \eqref{eq:app}, it holds
    \[
        \int_0^t \norm{\partial_y U(r,\cdot)}{L^2}^2 ds
        \leq
        C t^{-1} \nu^{-2} \norm{u}{H^{-1}}^2
    \]
    for some constant $C>0$.
\end{lemma}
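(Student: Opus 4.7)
The plan is to prove this by a direct energy estimate in the $H^{-1}$--$H^1$ duality, which is the natural pairing for a forcing term of negative Sobolev regularity. Since $u\in B^{\alpha}_{1,\infty}(\Tb)\hookrightarrow H^{-1}(\Tb)$ for $\alpha>-\tfrac{1}{2}$ and the heat semigroup implicit in \eqref{eq:app} regularises $U$ to $H^1$ for positive times, every pairing written below is classically meaningful.

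Concretely, I would test \eqref{eq:app} against $U(s)$ in $L^2(\Tb)$ to obtain the energy identity
\[
    \tfrac{1}{2}\tfrac{d}{ds}\norm{U(s)}{L^2}^2 + \tfrac{\nu}{2}\norm{\partial_y U(s)}{L^2}^2 = \langle u-\bar u,\,U(s)\rangle_{H^{-1}\times H^1},
\]
estimate the right-hand side by duality and Poincar\'e (applicable because $U$ has zero spatial mean) as $|\langle u-\bar u,U\rangle|\le\norm{u}{H^{-1}}\norm{\partial_y U}{L^2}$, apply Young's inequality with a weight of order $\nu$ to absorb a fraction of the gradient into the dissipation, and then integrate in $s\in[0,t]$ using $U(0)=0$ and discard the nonnegative term $\tfrac12\norm{U(t)}{L^2}^2$. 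The outcome is a bound of the form $\int_0^t\norm{\partial_y U(s)}{L^2}^2\,ds \le 4\,t\,\nu^{-2}\norm{u}{H^{-1}}^2$, with an explicit universal constant.

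The main obstacle is reconciling the output of this (essentially canonical) argument with the factor $t^{-1}$ appearing on the right-hand side of the claimed inequality. The left-hand side $\int_0^t\norm{\partial_y U(s)}{L^2}^2\,ds$ is a nondecreasing function of $t$ because the integrand is nonnegative, whereas $Ct^{-1}\nu^{-2}\norm{u}{H^{-1}}^2$ is strictly decreasing on $(0,\infty)$; consequently no such bound can hold uniformly in $t$. One can make this obstruction quantitative by the explicit Fourier representation $\hat U_k(s)=2\hat u_k(\nu k^2)^{-1}(1-e^{-\nu k^2s/2})$, which shows both $\int_0^t\norm{\partial_y U(s)}{L^2}^2\,ds\sim 4t\nu^{-2}\norm{u}{H^{-1}}^2$ as $t\to\infty$ and $\int_0^\infty\norm{\partial_y U(s)}{L^2}^2\,ds=+\infty$ whenever $u\not\equiv\bar u$. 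Absent an unstated hypothesis restricting the range of $t$ or altering the right-hand side norm, the energy argument above is the correct proof and furnishes the natural companion inequality with factor $t$ rather than $t^{-1}$, which I take to be the intended content of the lemma and which is what is actually invoked in the proof of \cref{lemma:continuity}.
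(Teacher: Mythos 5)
Your suspicion is correct: the exponent $t^{-1}$ in the statement is a typographical error for $t$, and the paper's own proof of this lemma arrives at exactly the bound $\int_0^t\|\partial_y U(s)\|_{L^2}^2\,ds \lesssim t\,\nu^{-2}\|u\|_{H^{-1}}^2$. Your argument is in substance the same as the paper's: both test the parabolic equation against $U$, use the zero-mean Poincar\'e inequality and $H^{-1}$--$H^1$ duality to bound the forcing by $\|u\|_{H^{-1}}\|\partial_y U\|_{L^2}$, and then close in time. The only cosmetic difference is that you absorb the gradient pointwise in time via a $\nu$-weighted Young inequality before integrating, whereas the paper first integrates, applies Cauchy--Schwarz in $s$, and then divides through by $\bigl(\int_0^t\|\partial_y U\|_{L^2}^2\,ds\bigr)^{1/2}$; your variant sidesteps that harmless division step. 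Your monotonicity remark and the explicit Fourier computation neatly certify that no bound with $t^{-1}$ can hold uniformly, and the constant $4$ you obtain matches the large-time behaviour of the exact solution. One minor slip: this $H^{-1}$ estimate is not the one actually invoked in the proof of \cref{lemma:continuity}; that proof relies on the preceding, Besov-scale estimate \eqref{eq:Uvsu}.
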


\begin{proof}
    First of all, we can test $\partial_t U - \frac{\nu}{2} \partial_y^2 U = u - \bar u$ against $U$ and get ($U$ has null mean)
    \[
        \frac12 \frac{d}{dt} \norm{U}{L^2}^2 + \frac{\nu}{2} \norm{\partial_y U }{L^2}^2
        =
        \int_\Tb u U .
    \]
    Such operation is legitimate: $u\in B^{\alpha}_{1,\infty}$ and $B^{\alpha}_{1,\infty}\subset H^{-1}$, intended as the dual of the Sobolev space $H^1$.
    It is then not hard to check that the solution $U$ to \eqref{eq:app} belongs to $B^{\alpha+2}_{1,\infty}$, and that $B_{1,\infty}^{\alpha+2} \subset H^1$ for all $-\frac12<\alpha<0$.

    Now, by duality
    \[
        \frac12 \frac{d}{d t} \norm{U}{L^2}^2 + \frac{\nu}{2} \norm{\partial_y U}{L^2}^2
        =
        \int_\Tb u U
        \leq
        \norm{u}{H^{-1}} \norm{U}{H^1}
        \lesssim \norm{u}{H^{-1}} \norm{\partial_y U}{L^2}.
    \]
    At this point it suffices to integrate in time between $0$ and $t$, $t\in[0,T]$, and neglect the (positive) term $\norm{U(t)}{L^2}^2$, to obtain
    \begin{align*}
        \frac\nu2 \int_0^t \norm{\partial_y U(s)}{L^2}^2 d s
         & \lesssim
        \norm{u}{H^{-1}} \int_0^t \norm{\partial_y U(s)}{L^2} d s \\
         & \lesssim
        t^{\frac12} \norm{u}{H^{-1}} \left( \int_0^t \norm{\partial_y U(s)}{L^2}^2 d s \right)^{\frac12} ,
    \end{align*}
    which easily yields
    \[
        \int_0^t \norm{\partial_y U(s)}{L^2}^2 d s
        \lesssim
        t \nu^{-2} \norm{u}{H^{-1}}^2 .
    \]
\end{proof}

%%%%%%%%%%%%%%%%%%%%%%%%%%%%%%%%%%%%%%%%
%%%%%%%%%%%%%%%%%%%%%%%%%%%%%%%%%%%%%%%%
%%%%%%%%%%%%%%%%%%%%%%%%%%%%%%%%%%%%%%%%

\section*{Acknowledgements}
The first author acknowledges the partial support of the project PNRR - M4C2 - Investimento 1.3, Partenariato Esteso PE00000013 - \emph{FAIR - Future Artificial Intelligence Research} - Spoke 1 \emph{Human-centered AI}, funded by the European Commission under the NextGeneration EU programme, of the project \emph{Noise in fluid dynamics and related models} funded by the MUR Progetti di Ricerca di Rilevante Interesse Nazionale (PRIN) Bando 2022 - grant 20222YRYSP, of the project \emph{APRISE - Analysis and Probability in Science} funded by the the University of Pisa, grant PRA\_2022\_85, and of the MUR Excellence Department Project awarded to the Department of Mathematics, University of Pisa, CUP I57G22000700001.

The second author was supported by the project \emph{Mathematical methods for climate science}, funded by the Ministry of University and Research (MUR) as part of the PON 2014-2020 ``Research and Innovation'' resources - Green Action - DM MUR 1061/2022.

\bibliographystyle{amsalpha}
\bibliography{roughmixing}
\end{document}